\documentclass[11pt]{article}

\usepackage[margin=1in]{geometry}
\usepackage{amsmath,amssymb,amsthm}
\usepackage{mathtools}
\usepackage[hypertexnames=false]{hyperref}
\usepackage{url}
\usepackage[numbers,sort&compress]{natbib}
\usepackage{booktabs}
\usepackage{graphicx}
\usepackage{xcolor}
\usepackage{enumitem}
\usepackage{thmtools}

\newtheorem{theorem}{Theorem}[section]
\newtheorem{lemma}[theorem]{Lemma}
\newtheorem{proposition}[theorem]{Proposition}
\newtheorem{corollary}[theorem]{Corollary}

\theoremstyle{definition}
\newtheorem{definition}[theorem]{Definition}
\newtheorem{remark}[theorem]{Remark}

\newtheorem{openproblem}[theorem]{Open Problem}

\newcommand{\eps}{\varepsilon}

\newcommand{\lamH}{\lambda_1(A_H)}
\newcommand{\lamG}{\lambda_1(A_G)}
\newcommand{\AG}{A_G}
\newcommand{\AH}{A_H}
\newcommand{\LG}{L_G}
\newcommand{\LH}{L_H}
\newcommand{\be}{\mathbf{e}}
\newcommand{\bb}{\mathbf{b}}
\newcommand{\vone}{v_1}
\newcommand{\E}{\mathbb{E}}
\newcommand{\R}{\mathbb{R}}

\DeclareMathOperator{\Var}{Var}

\title{Adjacency Spectral Radius Under Laplacian Sparsification:\\
Deterministic and Probabilistic Bounds}

\author{Joshua Steier\\[4pt]
\small \texttt{joshsteier@gmail.com}\\[2pt]
\small \texttt{Independent Researcher}}

\date{}

\begin{document}
\maketitle

\begin{abstract}
Spielman--Srivastava spectral sparsification preserves Laplacian quadratic forms to
within $(1 \pm \eps)$, but does not directly control the adjacency spectral
radius~$\lamG$---which governs the NIMFA epidemic threshold and arises in spectral
clustering. We prove $|\lamH - \lamG| \leq \eps(2\Delta - \lamG)$ deterministically,
with a sharp $\eps\lamG$ bound for reweighting sparsifiers via Perron--Frobenius
monotonicity. Under effective-resistance sampling, Matrix Bernstein gives
$O(\eps\Delta/\sqrt{c})$ with high probability. Combining eigenvector delocalization
with resolvent perturbation theory, we establish that for graphs with delocalized
Perron eigenvectors and $\delta_{\mathrm{gap}} = \Omega(\Delta)$,
\[
|\lamH - \lamG| \;\leq\;
O\!\left(\frac{\eps\,\Delta\,\sqrt{\log n}}{\sqrt{n}}\right)
\;+\; O\!\left(\frac{\eps^2\Delta^2}{\delta_{\mathrm{gap}}}\right),
\]
with corollaries for Erd\H{o}s--R\'{e}nyi graphs, regular expanders, and stochastic
block models. Lower bounds establish tightness for regular graphs.
\end{abstract}

\medskip
\noindent\textbf{Keywords:} spectral sparsification, adjacency spectral radius, random graphs, matrix concentration, effective resistance

\section{Introduction}\label{sec:intro}

In computational epidemiology, large-scale simulations of SIS and SIR dynamics on contact
networks are often bottlenecked by graph size. Spielman--Srivastava spectral
sparsification~\citep{spielman2011graph} offers an attractive preprocessing step: it
samples edges proportional to their effective resistances, retaining only
$O(n \log n / \eps^2)$ edges while provably preserving all Laplacian quadratic forms
to within $(1 \pm \eps)$. Empirical studies confirm that epidemic dynamics on the
sparsified graph closely match the original~\citep{mercier2022effective}.

However, the theoretical guarantees of Laplacian sparsification do not directly address
the adjacency spectrum. In the $N$-intertwined mean-field approximation (NIMFA) of the
SIS model, the epidemic threshold is $\tau_{\mathrm{mf}}(G) = 1/\lamG$, where $\lamG$
is the spectral radius of the adjacency
matrix~\citep{vanmieghem2009virus,wang2003epidemic}. The quantity $1/\lamG$ is the
threshold of the mean-field approximation, not the exact sharp threshold of the
finite-state SIS Markov process, which is more subtle on general finite
graphs~\citep{vanmieghem2014exact}. Nevertheless, $1/\lamG$ is the standard spectral proxy
used throughout computational epidemiology and network science~\citep{vanmieghem2011nimfa}.

Since $A = D - L$, the adjacency spectrum depends on both the Laplacian and the degree
matrix, and the degree matrix is only approximately preserved under sparsification. This
raises a precise mathematical question:

\begin{quote}
\emph{If $H$ is a $(1 \pm \eps)$-Laplacian sparsifier of $G$, what is the tightest
provable bound on $|\lamH - \lamG|$?}
\end{quote}

\noindent To our knowledge, the specific question of bounding $|\lamH - \lamG|$ for
Laplacian-preserving sparsifiers has not been addressed in the literature, though the
surrounding space is not empty. That effective-resistance sparsification approximately
preserves the adjacency spectral radius---and hence the mean-field epidemic
threshold---has been observed empirically~\citep{mercier2022effective}, but we are not
aware of any \emph{provable, quantitative} bound on this distortion; supplying one is
the purpose of this work.
Random edge-sampling schemes such as DOULION~\citep{tsourakakis2009doulion}
retain each edge independently and reweight by the inverse sampling probability,
and have been used as a general-purpose sparsification primitive; our contribution
is to provide \emph{rigorous, quantitative} bounds on adjacency spectral radius
distortion specifically for \emph{Laplacian-preserving} sparsifiers, which offer
the additional structural guarantee of preserving all Laplacian quadratic forms.
The Spielman--Srivastava and
BSS~\citep{batson2012twice} papers bound Laplacian eigenvalues only.
Achlioptas--McSherry~\citep{achlioptas2007fast} and related work on matrix sparsification
use entry-wise sampling to approximate a matrix in spectral norm, but do not address the
graph-specific setting of Laplacian-preserving subgraphs. Kolla et
al.~\citep{kolla2010subgraph} study subgraph sparsification (retaining a subgraph of a
given graph union while preserving Laplacian quality), which is a different structural
question. Purohit et al.~\citep{purohit2014fast} minimize the change in $\lamG$ under
graph \emph{coarsening} (node merging), which changes the vertex set and is thus
incomparable with our edge-sparsification setting. Bravo Hermsdorff and
Gunderson~\citep{bravo2019unifying} provide a unifying framework for sparsification and
coarsening that preserves the Laplacian pseudoinverse, but do not analyze adjacency
eigenvalue distortion.

More recently, Jin, Karmarkar, Musco, Sidford, and Singh~\citep{jin2024spectrum}
develop nuclear sparsifiers that yield approximation guarantees for the
spectral density of the \emph{normalized} adjacency/Laplacian matrix in Wasserstein-1
distance, which is a distributional statement about all eigenvalues of a different
matrix operator. Cai, Chen,
and Peng~\citep{cai2023effective} prove leverage-score-based lower bounds on eigenvalues
after row/edge removal, which is structurally adjacent to our perturbation framing.
Neither addresses the top adjacency eigenvalue $\lamG$ under Laplacian-preserving
sparsification, which is the focus of this work.

This gap matters practically because many graph analysis pipelines sparsify a graph once
for computational efficiency (using effective resistance because it preserves diffusion
properties) but later analyze the adjacency spectrum for different purposes---mean-field
epidemic thresholds, spectral clustering, centrality estimation. Our results quantify the
collateral damage to $\lamG$ from this workflow.

\begin{remark}
Since the sparsifier $H$ is a weighted graph, the NIMFA threshold $1/\lamH$ assumes a
weighted SIS model where infection rates scale linearly with edge weights.
\end{remark}

\paragraph{Contributions.}
Our main result (Theorem~\ref{thm:delocalized}) shows that for graphs with
delocalized Perron eigenvectors and spectral gap $\delta_{\mathrm{gap}} = \Omega(\Delta)$,
Spielman--Srivastava sparsification preserves $\lamG$ to within
$O(\eps\sqrt{\Delta\log n}) + O(\eps^2\Delta)$ with high probability---far tighter
than the generic operator-norm bound $O(\eps\Delta)$.
The paper develops three layers of analysis leading to this result:

\begin{enumerate}[label=(\alph*),nosep]
\item \emph{Deterministic bounds.}
For any $(1\pm\eps)$-Laplacian sparsifier:
$|\lamH - \lamG| \leq \eps(2\Delta - \lamG)$ absolutely
(Theorem~\ref{thm:abs-det}), with a sharp $\eps$-relative bound
$|\lamH - \lamG| \leq \eps\lamG$ for reweighting sparsifiers via Perron--Frobenius
monotonicity (Theorem~\ref{thm:pf-sharp}). We introduce an eigenvector-degree
parameter $\gamma(G)$ that governs the one-sided bound
(Theorem~\ref{thm:gamma-lower}) and an effective degree $\alpha_w(G)$ that
isolates the true driver of distortion (Theorem~\ref{thm:refined-det}).

\item \emph{Probabilistic refinement via resolvent perturbation theory.}
Under SS sampling, a Matrix Bernstein bound gives
$\|\AH - \AG\|_2 \leq 2\eps\Delta/\sqrt{c}$ (Theorem~\ref{thm:bernstein}).
A second-order resolvent decomposition (Theorem~\ref{thm:resolvent})
combined with scalar concentration of the first-order term and eigenvector
delocalization yields the main two-term bound
(Theorem~\ref{thm:delocalized}), with explicit corollaries for
Erd\H{o}s--R\'{e}nyi graphs, $d$-regular expanders, and balanced stochastic
block models.

\item \emph{Lower bounds and tightness.}
Reweighting constructions establish that the deterministic bound is tight for
regular graphs and that $|\lamH - \lamG| \geq \eps\lamG$ for all graphs.
Whether edge \emph{deletion} can cause $\omega(\eps\lamG)$ distortion
remains open (Open Problem~\ref{op:genuine}).

\item \emph{Comparison with entry-wise sparsification.}
At equal edge count, the SS eigenvalue distortion for well-connected graphs
is a factor of $\sqrt{n/\log n}$ below what entry-wise sampling achieves,
while additionally preserving the Laplacian (Section~\ref{sec:comparison}).
\end{enumerate}

\paragraph{Organization.}
Section~\ref{sec:prelim} sets notation.
Section~\ref{sec:deterministic} proves the deterministic bounds and discusses
limitations of the $\gamma$-bound.
Section~\ref{sec:reweight} establishes the sharp Perron--Frobenius bound for
reweighting sparsifiers.
Section~\ref{sec:refined-det} gives the eigenvector-weighted deterministic refinement.
Section~\ref{sec:bernstein} proves the probabilistic operator-norm bound.
Section~\ref{sec:delocalized} establishes the delocalized distortion bound,
with corollaries for specific graph families and a table of resolvent conditions.
Section~\ref{sec:lower} establishes lower bounds and tightness.
Section~\ref{sec:comparison} compares with entry-wise sparsification.
Section~\ref{sec:experiments} presents computational validation.
Section~\ref{sec:discussion} discusses open problems.

\section{Preliminaries}\label{sec:prelim}

Let $G = (V, E)$ be a connected simple graph on $n = |V|$ vertices with
$m = |E|$ edges. We write $\AG$ for the adjacency matrix,
$D_G = \operatorname{diag}(d_1, \ldots, d_n)$ for the degree matrix, and
$\LG = D_G - \AG$ for the combinatorial Laplacian. Let $\Delta = \max_i d_i$
and $\delta = \min_i d_i$ denote the maximum and minimum degrees.

The eigenvalues of $\AG$ are $\lamG = \lambda_1 \geq \lambda_2 \geq \cdots \geq \lambda_n$.
We write $\vone$ for the unit eigenvector corresponding to $\lamG$
(i.e., $\AG \vone = \lamG \vone$, $\|\vone\|_2 = 1$). The \emph{spectral gap}
is $\delta_{\mathrm{gap}} = \lambda_1 - \lambda_2 > 0$ (strict positivity holds
for connected graphs by the Perron--Frobenius theorem).

For each edge $e = (u,v) \in E$, the \emph{effective resistance} is
$R_e = \bb_e^\top \LG^+ \bb_e$ where $\bb_e = \be_u - \be_v$ and $\LG^+$ is
the Moore--Penrose pseudoinverse. By Foster's theorem,
$\sum_{e \in E} R_e = n - 1$ for unweighted graphs.

We write $\Phi := \AH - \AG$ for the perturbation matrix.

\begin{definition}[Effective-resistance sampling scheme]\label{def:ss}
Given $G$ and a sample budget $q \geq 1$:
\begin{enumerate}[nosep]
\item Set sampling probabilities $p_e = R_e / (n-1)$ for each $e \in E$.
\item Draw $q$ edges i.i.d.\ from distribution $(p_e)_{e \in E}$.
\item For edge $e$ sampled $k_e$ times, assign weight $\tilde{w}_e = k_e / (q p_e)$ in $H$.
\end{enumerate}
\end{definition}

\begin{theorem}[Spielman--Srivastava {\citep{spielman2011graph}}]\label{thm:ss}
There exists an absolute constant $C > 0$ such that for any $\eps \in (0,1)$,
if $q \geq C \cdot n\log n / \eps^2$, then the output $H$ of
Definition~\ref{def:ss} satisfies
$(1-\eps)\LG \preceq \LH \preceq (1+\eps)\LG$ with high probability.
\end{theorem}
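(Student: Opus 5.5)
We prove Theorem~\ref{thm:ss} by writing $\LH$ as a sum of independent rank-one random matrices and applying matrix concentration in the whitened coordinates defined by $\LG^{+}$. For each edge $e$ put $\mathbf{y}_e := (\LG^{+})^{1/2}\bb_e$. Then $\|\mathbf{y}_e\|_2^2 = R_e$, and by Foster's theorem
\[
\sum_{e\in E} \mathbf{y}_e\mathbf{y}_e^\top = \Pi,
\]
the orthogonal projection onto $\mathbf{1}^\perp$. Since $G$ is connected, both $\LG$ and $\LH$ annihilate $\mathbf{1}$ and have range contained in $\mathbf{1}^\perp$ (the support of $H$ is a subset of $E$). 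The two-sided bound $(1-\eps)\LG \preceq \LH \preceq (1+\eps)\LG$ is therefore equivalent to
\[
\bigl\|(\LG^{+})^{1/2}\LH(\LG^{+})^{1/2} - \Pi\bigr\|_2 \le \eps .
\]
This reduction is the first step: conjugating by $(\LG^{+})^{1/2}$ preserves the Loewner order on $\mathbf{1}^\perp$.

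Next, write the sampled sum explicitly. If the $i$-th draw is edge $e_i$, then
\[
(\LG^{+})^{1/2}\LH(\LG^{+})^{1/2} = \frac{1}{q}\sum_{i=1}^{q} X_i, \qquad X_i := \frac{\mathbf{y}_{e_i}\mathbf{y}_{e_i}^\top}{p_{e_i}},
\]
and the $X_i$ are i.i.d.\ with $\E X_i = \sum_e \mathbf{y}_e\mathbf{y}_e^\top = \Pi$. The choice $p_e = R_e/(n-1)$ is exactly the one that makes every summand uniformly bounded:
\[
\|X_i\|_2 = \frac{R_{e_i}}{p_{e_i}} = n-1 .
\]
We then apply the matrix Chernoff bound (Ahlswede--Winter/Tropp, or Rudelson's lemma as in the original proof) to the PSD summands $X_i/(n-1)$. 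These are supported on the $(n-1)$-dimensional space $\mathbf{1}^\perp$, have norm at most $1$, and their sum has mean $\frac{q}{n-1}\Pi$. This gives
\[
\Pr\Bigl[\bigl\|\tfrac1q\textstyle\sum_i X_i - \Pi\bigr\|_2 > \eps\Bigr] \le 2(n-1)\exp\!\Bigl(-\frac{\eps^2 q}{3(n-1)}\Bigr).
\]

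With $q \ge C n\log n/\eps^2$ and $C$ a large enough absolute constant (for example $C=6$), the right-hand side is at most $2n^{-1}$, which is the claimed high probability. Finally, since $\tilde w_e = k_e/(qp_e)$, we have $\LH = \frac1q\sum_i \bb_{e_i}\bb_{e_i}^\top/p_{e_i}$, which is exactly the sum above after conjugation, so no further work is needed.

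The main obstacle is invoking a dimension-dependent concentration inequality on the correct subspace. The projection $\Pi$ is singular, so the relative Chernoff form must be applied after restricting to $\mathbf{1}^\perp$, where $\Pi$ acts as the identity and the ambient dimension is $n-1$. This restriction is also what turns the uniform bound $R_e/p_e = n-1$ into the $\log n$ factor in the sample budget. Every other step is bookkeeping.
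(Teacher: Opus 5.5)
Your argument is correct and is the standard Spielman--Srivastava proof: whiten by $(\LG^{+})^{1/2}$, sample by effective resistance so that every rank-one summand has norm exactly $n-1$, then apply matrix concentration restricted to $\mathbf{1}^\perp$; the paper does not reprove this theorem and only cites it. One caveat: the explicit matrix Chernoff tail you display is what gives failure probability below $2/n$ at $q \ge 6n\log n/\eps^2$ for every $\eps\in(0,1)$, whereas the original paper combines the Rudelson--Vershynin expectation bound with Markov's inequality and gets only success probability $1/2$ at this budget, so your parenthetical ``or Rudelson's lemma'' cannot be used interchangeably.
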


\begin{definition}[Eigenvector-degree parameter]\label{def:gamma}
For a connected graph $G$ with unit Perron eigenvector $\vone$, define
\[
\alpha(G) = \vone^\top D_G \vone = \sum_{i=1}^n d_i \, v_{1,i}^2,
\qquad
\gamma(G) = \frac{2\alpha(G)}{\lamG} - 1.
\]
\end{definition}

\begin{remark}\label{rem:gamma-bounds}
Since $\LG = D_G - \AG$ is positive semidefinite, $\vone^\top D_G \vone \geq
\vone^\top \AG \vone = \lamG$, so $\gamma(G) \geq 1$. For $d$-regular graphs,
$\vone = \mathbf{1}/\sqrt{n}$, $\alpha = d$, $\lamG = d$, so $\gamma = 1$.
For the star $K_{1,n-1}$, $\gamma = \Theta(\sqrt{n})$.
In general, $\gamma \leq 2\Delta/\lamG - 1$.
\end{remark}

\begin{definition}[Delocalization]\label{def:deloc}
A connected graph~$G$ is \emph{$K$-delocalized} if its unit Perron eigenvector satisfies
$\|\vone\|_\infty \leq K / \sqrt{n}$.
\end{definition}

\begin{remark}
For $d$-regular graphs, $\vone = \mathbf{1}/\sqrt{n}$, so $K = 1$.
For Erd\H{o}s--R\'{e}nyi $G(n,p)$ with $np \geq C\log n$,
the general eigenvector delocalization bound of~\citet{he2019eigenvectors}
gives $\|\vone\|_\infty \leq n^{-1/2+\eps}$ for any $\eps > 0$ (i.e.,
$K = n^{o(1)}$). For the Perron eigenvector specifically, the entrywise
perturbation theory of~\citet{abbe2020entrywise} yields the stronger
$K = O(1)$ in the regime $np/\log n \to \infty$; see
Corollary~\ref{cor:er} for details.
\end{remark}

\section{Deterministic Bounds}\label{sec:deterministic}

\begin{lemma}[Degree preservation]\label{lem:degree}
If $(1-\eps)\LG \preceq \LH \preceq (1+\eps)\LG$, then for every vertex $i$:
\[
(1-\eps)\, d_i(G) \leq d_i(H) \leq (1+\eps)\, d_i(G).
\]
\end{lemma}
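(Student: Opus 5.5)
The plan is to test the Loewner inequalities $(1-\eps)\LG \preceq \LH \preceq (1+\eps)\LG$ against a single standard basis vector. Diagonal entries of a weighted Laplacian are weighted degrees, so this should give the claim immediately.

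The steps, in order, are as follows.

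\textbf{Setup.} Here $H$ is a weighted graph on the same vertex set, with Laplacian $\LH = D_H - \AH$. The degree $d_i(H) = \sum_j w_{ij}$ is the weighted degree, and $\AH$ has zero diagonal since there are no self-loops. Then
\[
\be_i^\top \LH \be_i = (\LH)_{ii} = d_i(H) - (\AH)_{ii} = d_i(H),
\]
and likewise $\be_i^\top \LG \be_i = d_i(G)$.

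\textbf{Apply the Loewner order.} By definition, $M \preceq N$ means $x^\top M x \le x^\top N x$ for all $x \in \R^n$. Taking $x = \be_i$ in both inequalities gives
\[
(1-\eps)\, d_i(G) \le d_i(H) \le (1+\eps)\, d_i(G)
\]
directly. Since each inequality is used separately, no sign assumption on $\eps$ is needed beyond what the hypothesis already supplies.

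There is essentially no obstacle. The only point to handle with care is the convention that $d_i(H)$ denotes the weighted degree, with the weights $\tilde w_e$ from Definition~\ref{def:ss}, and that $H$ has no self-loops. Under those conventions the diagonal of $\LH$ is exactly the weighted degree vector.

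I would also add a brief remark that the argument needs neither connectivity nor the sampling structure. It holds for any pair of graph Laplacians satisfying the sandwich, and it is purely deterministic.
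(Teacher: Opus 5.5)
Your proposal is correct and is essentially the paper's proof: test the Loewner sandwich at $\mathbf{e}_i$, and use that the diagonal entry of a loopless (weighted) Laplacian is the (weighted) degree. Your added remarks are accurate — the argument needs no connectivity and no sampling structure — and the paper implicitly relies on the same conventions.
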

\begin{proof}
Evaluate the Laplacian quadratic form at $x = \be_i$:
$x^\top L x = L_{ii} = d_i$ for any graph with no self-loops.
The two-sided Laplacian guarantee then gives the result directly.
\end{proof}

\begin{theorem}[Deterministic $\gamma$-bound, lower]\label{thm:gamma-lower}
Let $H$ be a $(1\pm\eps)$-Laplacian sparsifier of $G$. Then
\[
\lamH \geq \lamG\!\left[1 - \eps\,\gamma(G)\right].
\]
\end{theorem}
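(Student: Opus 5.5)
We argue variationally, using $\vone$ (the Perron vector of $G$) as a test vector for $\AH$. Since $\AH$ is symmetric, the Rayleigh principle gives
\[
\lamH \;\geq\; \vone^\top \AH \vone .
\]
So it suffices to lower-bound this single quadratic form. We write $\AH = D_H - \LH$ and handle the degree part and the Laplacian part separately:
\[
\vone^\top \AH \vone \;=\; \vone^\top D_H \vone \;-\; \vone^\top \LH \vone .
\]

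\emph{Degree part.} By Lemma~\ref{lem:degree}, $d_i(H) \geq (1-\eps) d_i(G)$ for every $i$. Weighting by $v_{1,i}^2 \geq 0$ and summing gives
\[
\vone^\top D_H \vone \;\geq\; (1-\eps)\,\alpha(G).
\]

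\emph{Laplacian part.} The upper half of the sparsifier guarantee gives
\[
\vone^\top \LH \vone \;\leq\; (1+\eps)\,\vone^\top \LG \vone \;=\; (1+\eps)\bigl(\alpha(G) - \lamG\bigr),
\]
using $\vone^\top \AG \vone = \lamG$.

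\emph{Combining.} Subtracting the second bound from the first,
\[
\lamH \;\geq\; (1-\eps)\alpha - (1+\eps)(\alpha - \lamG) \;=\; \lamG - \eps\bigl(2\alpha - \lamG\bigr).
\]
Factoring out $\lamG$ gives $\lamG\bigl[1 - \eps(2\alpha/\lamG - 1)\bigr] = \lamG[1-\eps\gamma(G)]$, which is exactly the claimed bound by Definition~\ref{def:gamma}.

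The only point needing care is the sign bookkeeping: the degree term must use the \emph{lower} half of the degree guarantee, while the Laplacian term must use the \emph{upper} half of the Laplacian guarantee, since it enters with a minus sign. We also need that $\vone$ is a legitimate test vector for $\AH$, which holds because $H$ is on the same vertex set and $\AH$ is symmetric. Nothing further is required.
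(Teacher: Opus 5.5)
Your proof is correct and follows essentially the same route as the paper: you test $\AH$ against $\vone$, apply the lower degree bound from Lemma~\ref{lem:degree} and the upper Laplacian bound $\LH \preceq (1+\eps)\LG$, and simplify to $\lamG - \eps(2\alpha(G) - \lamG) = \lamG[1-\eps\gamma(G)]$. The only difference is presentational: the paper first writes the matrix inequality $\AH \succeq D_H - (1+\eps)(D_G - \AG)$ and then evaluates it at $\vone$.
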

\begin{proof}
From $\LH \preceq (1+\eps)\LG$ we get
$\AH = D_H - \LH \succeq D_H - (1+\eps)(D_G - \AG)$.
Using Lemma~\ref{lem:degree} ($D_H \succeq (1-\eps)D_G$):
\begin{align*}
\vone^\top \AH \vone
&\geq (1-\eps)\,\vone^\top D_G \vone - (1+\eps)\,\vone^\top D_G \vone + (1+\eps)\lamG \\
&= -2\eps\,\alpha(G) + (1+\eps)\lamG
= \lamG\!\left[1 - \eps\,\gamma(G)\right].
\end{align*}
Since $\lamH \geq \vone^\top \AH \vone$ by the Rayleigh quotient, the result follows.
\end{proof}

\begin{theorem}[Deterministic upper bound]\label{thm:det-upper}
Let $H$ be a $(1\pm\eps)$-Laplacian sparsifier of $G$. Then
\[
\lamH \leq (1-\eps)\lamG + 2\eps\Delta.
\]
\end{theorem}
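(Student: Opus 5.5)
The plan mirrors the proof of Theorem~\ref{thm:gamma-lower}, but uses the other side of the Laplacian sandwich and works with a Perron eigenvector of $H$ instead of $G$. We write $\AH = D_H - \LH$ and use $\LH \succeq (1-\eps)\LG$, which gives the matrix inequality
\[
\AH \preceq D_H - (1-\eps)\LG = D_H - (1-\eps)D_G + (1-\eps)\AG .
\]
Next, Lemma~\ref{lem:degree} gives $D_H \preceq (1+\eps)D_G$ entrywise on the diagonal, hence in Loewner order. Therefore
\[
\AH \preceq 2\eps D_G + (1-\eps)\AG .
\]

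Let $u$ be a unit Perron eigenvector of $\AH$, so $\lamH = u^\top \AH u$. Evaluating the inequality at $u$ yields
\[
\lamH \leq 2\eps\, u^\top D_G u + (1-\eps)\, u^\top \AG u .
\]
We then bound each term separately:
\begin{itemize}[nosep]
\item $u^\top D_G u \leq \Delta$, since $D_G \preceq \Delta I$ and $\|u\|_2 = 1$;
\item $u^\top \AG u \leq \lamG$, by the Rayleigh characterization of the top eigenvalue of the symmetric matrix $\AG$.
\end{itemize}
Because $1-\eps > 0$ for $\eps\in(0,1)$, multiplying the second bound by $(1-\eps)$ preserves the inequality. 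Combining the two bounds gives $\lamH \leq (1-\eps)\lamG + 2\eps\Delta$.

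No step is a real obstacle. The only point needing care is the choice of test vector. We must test with the eigenvector of $H$, because the Rayleigh quotient at $\vone$ only bounds $\lamH$ from below. Since $u$ has no special relation to $G$, this choice forces the crude bound $u^\top D_G u\le \Delta$, and that is why $\Delta$ appears here in place of the sharper $\alpha(G)$ from Theorem~\ref{thm:gamma-lower}. The cross term with $\AG$ is controlled by $\lamG$ with the correct sign because the coefficient $(1-\eps)$ is positive.
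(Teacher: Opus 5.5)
Your proposal is correct and is essentially the paper's proof: you use $\LH \succeq (1-\eps)\LG$ together with $D_H \preceq (1+\eps)D_G$ from Lemma~\ref{lem:degree} to obtain $\AH \preceq 2\eps D_G + (1-\eps)\AG$, then test at the top eigenvector of $\AH$ with $u^\top D_G u \le \Delta$ and $u^\top \AG u \le \lamG$. Your explicit remark that $1-\eps>0$ is needed for the last step is a small clarification the paper leaves implicit.
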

\begin{proof}
From $\LH \succeq (1-\eps)\LG$:
$\AH \preceq D_H - (1-\eps)\LG = D_H - (1-\eps)(D_G - \AG)$.
Using $D_H \preceq (1+\eps)D_G$: for any unit vector $u$,
\begin{align*}
u^\top \AH u
&\leq (1+\eps)\,u^\top D_G u - (1-\eps)\,u^\top D_G u + (1-\eps)\,u^\top \AG u \\
&= 2\eps\,u^\top D_G u + (1-\eps)\,u^\top \AG u
\leq 2\eps\Delta + (1-\eps)\lamG.
\end{align*}
Taking $u$ to be the top eigenvector of $\AH$ and noting
$\lamH = u^\top \AH u$ completes the proof.
\end{proof}

\begin{theorem}[Deterministic absolute bound]\label{thm:abs-det}
Let $H$ be a $(1\pm\eps)$-Laplacian sparsifier of $G$. Then
\[
|\lamH - \lamG| \leq \eps(2\Delta - \lamG) \leq 2\eps\Delta.
\]
\end{theorem}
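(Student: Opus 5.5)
The plan is to combine the two one-sided bounds already proved, Theorem~\ref{thm:det-upper} and Theorem~\ref{thm:gamma-lower}, and then replace the eigenvector-weighted degree $\alpha(G)$ by the worst case $\Delta$.

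For the upper direction, Theorem~\ref{thm:det-upper} gives $\lamH \leq (1-\eps)\lamG + 2\eps\Delta$. Subtracting $\lamG$ from both sides yields
\[
\lamH - \lamG \leq 2\eps\Delta - \eps\lamG = \eps(2\Delta - \lamG).
\]

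For the lower direction, Theorem~\ref{thm:gamma-lower} gives $\lamG - \lamH \leq \eps\,\gamma(G)\,\lamG$. By Definition~\ref{def:gamma}, $\gamma(G)\lamG = 2\alpha(G) - \lamG$. Since $\vone$ is a unit vector, $\alpha(G) = \sum_i d_i v_{1,i}^2$ is a convex combination of the degrees, so $\alpha(G) \leq \Delta$; this is the content of the last claim in Remark~\ref{rem:gamma-bounds}. Hence
\[
\lamG - \lamH \leq \eps(2\alpha(G) - \lamG) \leq \eps(2\Delta - \lamG).
\]
Combining the two directions gives $|\lamH - \lamG| \leq \eps(2\Delta - \lamG)$.

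For the trailing inequality $\eps(2\Delta - \lamG) \leq 2\eps\Delta$, it suffices that $\lamG \geq 0$. This holds because $\AG$ is a nonnegative matrix, so by Perron--Frobenius its spectral radius is its largest eigenvalue; more simply, $\lamG \geq \mathbf{1}^\top \AG \mathbf{1}/n \geq 0$ by the Rayleigh quotient. As a consistency check, the left-hand side is nonnegative, since $\lamG \leq \Delta$ gives $2\Delta - \lamG \geq \Delta \geq 0$.

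There is no genuine obstacle: the whole argument is bookkeeping on top of the earlier theorems. The only step needing care is the bound $\alpha(G) \leq \Delta$, which follows because $\|\vone\|_2 = 1$ makes $\alpha(G)$ a weighted average of the $d_i$.
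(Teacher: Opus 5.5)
Your proposal is correct and matches the paper's proof essentially step for step. Both obtain the upper direction from Theorem~\ref{thm:det-upper}, the lower direction from Theorem~\ref{thm:gamma-lower} via $\gamma(G)\lamG = 2\alpha(G) - \lamG$ and $\alpha(G) \leq \Delta$, and the trailing inequality from $\lamG \geq 0$.
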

\begin{proof}
This combines the two one-sided bounds.
From Theorem~\ref{thm:gamma-lower}, using $\alpha(G) \leq \Delta$:
\[
\lamH \geq \lamG[1 - \eps\gamma(G)]
= \lamG + \eps\lamG - 2\eps\alpha(G)
\geq \lamG - \eps(2\Delta - \lamG).
\]
From Theorem~\ref{thm:det-upper}:
\[
\lamH \leq (1-\eps)\lamG + 2\eps\Delta = \lamG + \eps(2\Delta - \lamG).
\]
Combining gives $|\lamH - \lamG| \leq \eps(2\Delta - \lamG)$.
The weaker bound $\eps(2\Delta - \lamG) \leq 2\eps\Delta$ follows from $\lamG \geq 0$.
\end{proof}

\begin{corollary}[Deterministic relative bound]\label{cor:relative}
Let $H$ be a $(1\pm\eps)$-Laplacian sparsifier of a connected graph $G$. Then:
\[
\frac{|\lamH - \lamG|}{\lamG} \leq \eps\!\left(\frac{2\Delta}{\lamG} - 1\right)
\leq \eps\!\left(2\sqrt{\Delta} - 1\right).
\]
\end{corollary}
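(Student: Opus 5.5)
The plan is to divide the absolute bound of Theorem~\ref{thm:abs-det} by $\lamG$, and then bound the ratio $\Delta/\lamG$ using an elementary lower bound on the spectral radius. First I will check that $\lamG > 0$, so that the division is legitimate. Since $G$ is connected with $n \geq 2$, it has at least one edge $(u,v)$. The test vector $(\be_u + \be_v)/\sqrt{2}$ has Rayleigh quotient $1$ for $\AG$, so $\lamG \geq 1 > 0$. Dividing $|\lamH - \lamG| \leq \eps(2\Delta - \lamG)$ by $\lamG$ then gives the first inequality directly.

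For the second inequality it suffices to show $2\Delta/\lamG \leq 2\sqrt{\Delta}$, that is,
\[
\lamG \geq \sqrt{\Delta}.
\]
I will prove this with an explicit Rayleigh-quotient test vector supported on a maximum-degree star. Let $i$ be a vertex with $d_i = \Delta$ and let $N(i)$ be its neighbourhood. Define $x$ by
\[
x_i = \tfrac{1}{\sqrt{2}}, \qquad x_j = \tfrac{1}{\sqrt{2\Delta}} \text{ for } j \in N(i), \qquad x_k = 0 \text{ otherwise}.
\]
Then $\|x\|_2^2 = \tfrac12 + \Delta\cdot\tfrac{1}{2\Delta} = 1$.

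Next I compute the quadratic form. Every entry of $\AG$ and of $x$ is nonnegative. Any edges among the vertices of $N(i)$ therefore only add nonnegative terms to $x^\top \AG x$. Hence
\[
x^\top \AG x \;\geq\; 2\sum_{j \in N(i)} x_i x_j \;=\; 2\Delta\cdot\tfrac{1}{\sqrt{2}}\cdot\tfrac{1}{\sqrt{2\Delta}} \;=\; \sqrt{\Delta}.
\]
By the Rayleigh principle, $\lamG \geq x^\top \AG x \geq \sqrt{\Delta}$. Substituting $\Delta/\lamG \leq \sqrt{\Delta}$ into the first bound yields the second.

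There is no substantial obstacle here. The only points needing care are the positivity of $\lamG$ and the sign argument that discards the edges inside $N(i)$. An equivalent route for the latter is Perron--Frobenius monotonicity of the spectral radius under taking subgraphs, applied to the star $K_{1,\Delta}$, whose spectral radius is exactly $\sqrt{\Delta}$.
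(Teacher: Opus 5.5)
Your proposal is correct and follows the paper's proof: divide the bound of Theorem~\ref{thm:abs-det} by $\lamG$, then use $\lamG \geq \sqrt{\Delta}$, which comes from the star $K_{1,\Delta}$ at a maximum-degree vertex. The only difference is in how that inequality is verified. You plug the star's Perron vector, padded with zeros, into the Rayleigh quotient, and you check $\lamG > 0$ explicitly. The paper instead cites Perron--Frobenius monotonicity under entrywise domination. Your version is self-contained and needs no citation or irreducibility hypothesis.
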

\begin{proof}
The first inequality is immediate from Theorem~\ref{thm:abs-det}.
For the second, we use the classical bound $\lamG \geq \sqrt{\Delta}$, which holds
because the neighborhood of any maximum-degree vertex $v$ spans at least $\Delta$
edges, so $G$ contains the star $K_{1,\Delta}$ as a subgraph (after identifying $v$
with the hub and its neighbors with the leaves). Embedding $K_{1,\Delta}$ into $G$
shows that $(\AG)_{ij} \geq (A_{K_{1,\Delta}})_{ij}$ entry-wise (in the principal
submatrix indexed by $\{v\} \cup N(v)$, with zeros elsewhere). Since $\AG$ is
nonneg\-ative and irreducible, the monotonicity of the Perron root under entry-wise
domination~\citep[Corollary~8.1.19]{horn2012matrix} gives
$\lamG \geq \lambda_1(K_{1,\Delta}) = \sqrt{\Delta}$.
\end{proof}

\begin{corollary}[NIMFA threshold preservation]\label{cor:epidemic}
Let $H$ be a $(1\pm\eps)$-Laplacian sparsifier of $G$, and let
$\tau_{\mathrm{mf}}(G) = 1/\lamG$ be the NIMFA epidemic threshold.
If $\eps\,(2\Delta/\lamG - 1) < 1$, then
\[
\frac{1}{1 + \eps(2\Delta/\lamG - 1)}
\leq \frac{\tau_{\mathrm{mf}}(H)}{\tau_{\mathrm{mf}}(G)}
\leq \frac{1}{1 - \eps\,\gamma(G)}.
\]
\end{corollary}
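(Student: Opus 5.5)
The plan is to rewrite the ratio of thresholds as a ratio of spectral radii and then plug in the two one-sided eigenvalue bounds already proved. Since $\tau_{\mathrm{mf}}(G) = 1/\lamG$ and $\tau_{\mathrm{mf}}(H) = 1/\lamH$, we have
\[
\frac{\tau_{\mathrm{mf}}(H)}{\tau_{\mathrm{mf}}(G)} = \frac{\lamG}{\lamH}.
\]
An upper bound on $\lamH$ therefore gives the lower bound on the ratio, and a positive lower bound on $\lamH$ gives the upper bound. The only subtle point is that $\lamH > 0$, so that all the reciprocals make sense and preserve inequalities.

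\emph{Upper bound on the ratio.} Theorem~\ref{thm:gamma-lower} gives $\lamH \geq \lamG[1-\eps\gamma(G)]$. Remark~\ref{rem:gamma-bounds} gives $\gamma(G) \leq 2\Delta/\lamG - 1$, so the hypothesis $\eps(2\Delta/\lamG - 1) < 1$ implies $\eps\gamma(G) < 1$. Hence $1-\eps\gamma(G) > 0$, and since $\lamG > 0$ for a connected graph, $\lamH > 0$. Taking reciprocals of the positive quantities yields
\[
\frac{\lamG}{\lamH} \leq \frac{1}{1-\eps\gamma(G)}.
\]
This positivity check is the step I expect to need the most care: the bound in Remark~\ref{rem:gamma-bounds} follows from $\alpha(G) \leq \Delta$, so the stated hypothesis suffices.

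\emph{Lower bound on the ratio.} Theorem~\ref{thm:det-upper}, in the form of Theorem~\ref{thm:abs-det}, gives
\[
\lamH \leq \lamG + \eps(2\Delta - \lamG) = \lamG\bigl[1 + \eps(2\Delta/\lamG - 1)\bigr].
\]
Both sides are positive because $\lamH > 0$ was established above. Dividing $\lamG$ by each side therefore gives
\[
\frac{\lamG}{\lamH} \geq \frac{1}{1+\eps(2\Delta/\lamG - 1)}.
\]
Combining the two displays proves the corollary.
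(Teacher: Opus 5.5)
Your proof is correct and follows the route the paper intends: the paper states the corollary without a written proof, but the remark after it makes clear that the lower bound on the ratio comes from Theorem~\ref{thm:det-upper} and the upper bound from Theorem~\ref{thm:gamma-lower}, which is exactly your combination. Your positivity check matches the role of the stated hypothesis: you use $\gamma(G) \leq 2\Delta/\lamG - 1$ to get $\eps\gamma(G) < 1$, hence $\lamH > 0$, so the reciprocals preserve the inequalities.
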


The lower bound in Corollary~\ref{cor:epidemic} uses only $\Delta$ and $\lamG$ (both
computable in $O(m)$ and $O(m\log n)$ time respectively). The tighter upper bound
uses $\gamma(G) = 2\alpha(G)/\lamG - 1$, which requires the Perron eigenvector; in
practice, a few steps of power iteration on $\AG$ suffice for an accurate estimate.

\begin{proposition}[Jensen bias]\label{thm:jensen}
Let $H$ be the random output of the effective-resistance sampling scheme
(Definition~\ref{def:ss}). Then $\E[\lamH] \geq \lamG$.
\end{proposition}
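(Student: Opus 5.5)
The plan is to combine unbiasedness of the sampling scheme with convexity of the spectral radius, via Jensen's inequality.

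\emph{Step 1: unbiasedness.} In Definition~\ref{def:ss}, for each edge $e$ the count satisfies $k_e \sim \mathrm{Binomial}(q, p_e)$, so $\E[k_e] = q p_e$. Hence
\[
\E[\tilde w_e] = \frac{\E[k_e]}{q p_e} = 1 .
\]
Writing $\AH = \sum_{e \in E} \tilde w_e\,(\be_u\be_v^\top + \be_v\be_u^\top)$, where $e=(u,v)$, linearity gives $\E[\AH] = \AG$. This uses $p_e > 0$ for every edge, which holds since $R_e > 0$ for every edge of a connected graph.

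\emph{Step 2: convexity.} On symmetric matrices, $\lambda_1(M) = \max_{\|x\|_2 = 1} x^\top M x$. This is a pointwise supremum of linear functionals of $M$, so $M \mapsto \lambda_1(M)$ is convex.

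\emph{Step 3: Jensen.} Since $\AH$ takes only finitely many values, Jensen's inequality applies without integrability issues:
\[
\E[\lamH] = \E[\lambda_1(\AH)] \geq \lambda_1(\E[\AH]) = \lambda_1(\AG) = \lamG .
\]
Because $\AH$ is nonnegative, $\lambda_1(\AH)$ is also its spectral radius, so the statement holds under either reading of ``spectral radius.''

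The main point needing care is Step 1: we need the exact identity $\E[\AH] = \AG$, not just approximate preservation. It holds precisely because the weights $k_e/(qp_e)$ are inverse-probability reweightings. The other steps are routine.
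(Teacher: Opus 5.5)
Your proposal is correct and follows essentially the same route as the paper: unbiasedness $\E[\AH] = \AG$, convexity of $\lambda_1$ as a supremum of linear functionals, and Jensen's inequality. You additionally spell out the unbiasedness via $k_e \sim \mathrm{Binomial}(q, p_e)$, note that $p_e > 0$, and note that $\AH$ has finite support; the paper takes these for granted.
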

\begin{proof}
The map $A \mapsto \lambda_1(A)$ is convex (supremum of linear functions).
Since $\E[\AH] = \AG$ (unbiasedness of the SS estimator), Jensen's inequality gives
$\E[\lamH] \geq \lambda_1(\E[\AH]) = \lamG$.
\end{proof}

\subsection{Limitations of $\gamma(G)$ for Heterogeneous Graphs}

For heterogeneous graphs, $\gamma(G)$ can be large, making the $\gamma$-bound vacuous.

\begin{proposition}[Star graph $\gamma$]\label{prop:star-gamma}
For the star $K_{1,n-1}$,
$\gamma(K_{1,n-1}) = n/\sqrt{n-1} - 1 = \Theta(\sqrt{\Delta})$.
\end{proposition}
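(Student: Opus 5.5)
The plan is to compute the Perron eigenpair of the star explicitly and then substitute it into Definition~\ref{def:gamma}. Label the hub as vertex $0$ and the leaves as $1,\dots,n-1$. The adjacency matrix has rank two, so its nonzero eigenvalues come from a $2\times 2$ reduction. Use the ansatz $\vone = (a, b, \dots, b)$, which is justified because the leaves are symmetric and the Perron vector is unique and positive.

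The eigen-equations are
\[
(n-1)b = \lambda a, \qquad a = \lambda b.
\]
These give $\lamG = \sqrt{n-1}$ and $a = \sqrt{n-1}\, b$. Normalization requires $a^2 + (n-1)b^2 = 1$. Hence $2(n-1)b^2 = 1$, so
\[
a^2 = \tfrac12, \qquad b^2 = \frac{1}{2(n-1)}.
\]
In other words, half the Perron mass sits on the hub and half is spread evenly over the leaves.

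Next compute $\alpha(G) = \sum_i d_i v_{1,i}^2$. The hub has degree $n-1$ and each leaf has degree $1$, so
\[
\alpha = (n-1)\cdot\tfrac12 + (n-1)\cdot\frac{1}{2(n-1)} = \frac{n-1}{2} + \frac12 = \frac{n}{2}.
\]
It follows that
\[
\gamma = \frac{2\alpha}{\lamG} - 1 = \frac{n}{\sqrt{n-1}} - 1.
\]

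For the asymptotic claim, note that $\Delta = n-1$. Then
\[
\frac{n}{\sqrt{n-1}} = \sqrt{n-1} + \frac{1}{\sqrt{n-1}},
\]
so $\gamma = \sqrt{\Delta} + 1/\sqrt{\Delta} - 1$. This lies between $\sqrt{\Delta} - 1$ and $\sqrt{\Delta}$ for $\Delta \ge 1$, which gives $\Theta(\sqrt{\Delta})$ for $n \ge 3$, say. The only mildly delicate points are confirming positivity, so that this is indeed the Perron vector, and fixing the normalization. Neither is a real obstacle.
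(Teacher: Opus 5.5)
Your proposal is correct and matches the paper's proof. Both compute the Perron pair $\lamG = \sqrt{n-1}$, $v_{1,\mathrm{hub}}^2 = 1/2$, $v_{1,\mathrm{leaf}}^2 = 1/(2(n-1))$, obtain $\alpha = n/2$, and substitute into the definition of $\gamma$. Your explicit sandwich $\sqrt{\Delta}-1 \le \gamma \le \sqrt{\Delta}$ justifies the $\Theta(\sqrt{\Delta})$ claim more carefully than the paper, which simply asserts it.
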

\begin{proof}
For $K_{1,n-1}$: $\lamG = \sqrt{n-1}$, $v_{1,\mathrm{hub}} = 1/\sqrt{2}$,
$v_{1,\mathrm{leaf}} = 1/\sqrt{2(n-1)}$. Then
$\alpha = (n-1) \cdot \tfrac{1}{2} + (n-1) \cdot \tfrac{1}{2(n-1)} = n/2$,
giving $\gamma = n/\sqrt{n-1} - 1 = \Theta(\sqrt{\Delta})$.
\end{proof}

When $\gamma(G) = \Theta(\sqrt{\Delta})$, the $\gamma$-bound is vacuous for moderate
$\eps$; Corollary~\ref{cor:relative} still provides $O(\eps\sqrt{\Delta})$ relative
distortion unconditionally. For power-law graphs with degree exponent
$\beta \in (5/2, 3)$~\citep{chung2003eigenvalues}, $\gamma$ can reach
$\Theta(\sqrt{\Delta})$; extending the probabilistic analysis to such localized
eigenvector regimes is an important open direction
(Section~\ref{sec:discussion}).

\section{Sharp Bound for Reweighting Sparsifiers}\label{sec:reweight}

When the sparsifier retains all edges of~$G$ (with modified weights), the adjacency
perturbation can be bounded much more tightly than the generic Laplacian-based
approach. This yields a relative distortion of exactly~$\eps$, independent of~$\Delta$.

\begin{definition}[Reweighting sparsifier]\label{def:reweight}
A \emph{$(1\pm\eps)$-reweighting sparsifier} of $G$ is a weighted graph $H$ on the same
vertex and edge set as $G$, with edge weights $w_e \in [(1-\eps), (1+\eps)]$ for every
$e \in E$, such that $(1-\eps)\LG \preceq \LH \preceq (1+\eps)\LG$.
\end{definition}

\begin{theorem}[Perron--Frobenius relative bound]\label{thm:pf-sharp}
Let $H$ be a $(1\pm\eps)$-reweighting sparsifier of a connected graph~$G$
(Definition~\ref{def:reweight}). Then
\[
(1-\eps)\,\lamG \;\leq\; \lamH \;\leq\; (1+\eps)\,\lamG.
\]
In particular, $|\lamH - \lamG| \leq \eps\,\lamG$.
\end{theorem}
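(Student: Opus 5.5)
The approach is pure entrywise sandwiching followed by Perron--Frobenius monotonicity; the Laplacian condition in Definition~\ref{def:reweight} is not needed, only the weight constraint $w_e \in [1-\eps, 1+\eps]$ on the common edge set.

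\textbf{Entrywise sandwich.} Since $H$ has the same vertex and edge set as $G$, we have $(\AH)_{ij} = w_{ij}$ when $\{i,j\} \in E$ and $(\AH)_{ij}=0$ otherwise, while $(\AG)_{ij} = 1$ on $E$ and $0$ otherwise. Hence
\[
(1-\eps)\,\AG \;\le\; \AH \;\le\; (1+\eps)\,\AG
\]
entrywise. All three matrices are nonnegative. Because $G$ is connected and $\eps<1$, $(1-\eps)\AG$ is irreducible, and $\AH$ has the same sparsity pattern, so it is irreducible as well.

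\textbf{Monotonicity of the Perron root.} Invoke the same monotonicity result used in Corollary~\ref{cor:relative} \citep[Corollary~8.1.19]{horn2012matrix}: for nonnegative matrices with $0 \le B \le C$ entrywise, $\rho(B) \le \rho(C)$. For a nonnegative symmetric matrix the spectral radius equals the largest eigenvalue $\lambda_1$, by Perron--Frobenius. Applying this to both halves of the sandwich, and using homogeneity $\rho(tA) = t\rho(A)$ for $t \ge 0$, gives
\[
(1-\eps)\lamG \le \lamH \le (1+\eps)\lamG,
\]
and subtracting $\lamG$ gives $|\lamH-\lamG|\le\eps\lamG$.

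If one prefers to avoid citing the monotonicity result, a Rayleigh-quotient argument works directly. Let $u$ be the nonnegative Perron vector of $\AH$. Entrywise domination gives $u^\top \AH u \le (1+\eps)\,u^\top \AG u \le (1+\eps)\lamG$. For the lower bound, take the nonnegative Perron vector $\vone$ of $\AG$; then $\lamH \ge \vone^\top \AH \vone \ge (1-\eps)\lamG$.

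\textbf{Main obstacle.} The only real subtlety is keeping the test vectors nonnegative, so that entrywise domination transfers to the quadratic forms; this is exactly where Perron--Frobenius enters. Everything else is immediate.
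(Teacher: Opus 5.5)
Your proof is correct and follows the paper's argument: the entrywise sandwich $(1-\eps)\AG \le \AH \le (1+\eps)\AG$, then monotonicity of the Perron root (Horn--Johnson, Corollary~8.1.19) together with homogeneity. Your optional Rayleigh-quotient variant, which uses nonnegative Perron test vectors, is also valid and needs neither the citation nor irreducibility.
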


\begin{proof}
Since $H$ retains every edge of $G$ with weight $w_e \in [(1-\eps),(1+\eps)]$, the
adjacency matrices satisfy the entry-wise inequalities
\[
(1-\eps)\,(\AG)_{ij} \;\leq\; (\AH)_{ij} \;\leq\; (1+\eps)\,(\AG)_{ij}
\qquad \forall\; i,j.
\]
Both $\AG$ and $\AH$ are nonneg\-ative and irreducible (since $G$ is connected and all
edge weights are positive). By the monotonicity of the Perron root for irreducible
nonneg\-ative matrices (see, e.g., \citet{horn2012matrix}, Corollary~8.1.19): if
$0 \leq B \leq C$ entry-wise with $C$ irreducible, then $\rho(B) \leq \rho(C)$, with
equality if and only if $B = C$.

Applying this to $\AH \leq (1+\eps)\AG$:
\[
\lamH = \rho(\AH) \leq \rho\!\bigl((1+\eps)\AG\bigr) = (1+\eps)\,\lamG.
\]
Applying this to $(1-\eps)\AG \leq \AH$:
\[
(1-\eps)\,\lamG = \rho\!\bigl((1-\eps)\AG\bigr) \leq \rho(\AH) = \lamH.
\]
Combining gives the stated two-sided bound.
\end{proof}

A natural class of reweighting sparsifiers arises from Laplacian-preserving
modifications of tree substructures.

\begin{proposition}[Trees: Laplacian $\Leftrightarrow$ per-edge bounds]\label{prop:tree}
Let $T$ be a tree on $n$ vertices. A weighted graph~$H$ on the same vertex and edge set
as~$T$ with edge weights $(w_e)_{e \in E(T)}$ satisfies
$(1-\eps)L_T \preceq L_H \preceq (1+\eps)L_T$ if and only if
$w_e \in [(1-\eps), (1+\eps)]$ for every edge~$e$.
\end{proposition}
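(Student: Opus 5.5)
The plan is to use the fact that for a tree the edge-incidence structure is invertible on the relevant subspace. This lets us reduce the Loewner-order comparison of $L_H$ and $L_T$ to a comparison of diagonal weight matrices. Write $B\in\R^{(n-1)\times n}$ for the signed edge--vertex incidence matrix of $T$, so that $L_T = B^\top B$ and $L_H = B^\top W B$ with $W=\operatorname{diag}(w_e)$.

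\emph{Sufficiency.} Suppose $w_e\in[(1-\eps),(1+\eps)]$ for all $e$. Then $(1-\eps)I \preceq W \preceq (1+\eps)I$. Conjugating by $B$ preserves the Loewner order, since $x^\top B^\top W B x = (Bx)^\top W (Bx)$. This gives $(1-\eps)L_T\preceq L_H\preceq(1+\eps)L_T$ immediately. This direction holds for any graph, not just trees.

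\emph{Necessity.} This is the direction that uses the tree hypothesis, and it is the only real step. Because $T$ is a tree with $n-1$ edges and is connected, $B$ has rank $n-1$, so $B:\R^n\to\R^{n-1}$ is surjective. Concretely, for a fixed edge $e=(u,v)$ we need a vector $x$ with $(Bx)_f = 0$ for all $f\neq e$ and $(Bx)_e=1$. Removing $e$ splits $T$ into two components $S\ni u$ and $\bar S\ni v$; take $x=\mathbf{1}_S$. Then $Bx=\pm\be_e$, since the only edge crossing the cut is $e$. Evaluating the quadratic forms at this $x$ gives $x^\top L_T x = 1$ and $x^\top L_H x = w_e$. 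The hypothesis $(1-\eps)x^\top L_T x\le x^\top L_H x\le(1+\eps)x^\top L_T x$ then yields $w_e\in[1-\eps,1+\eps]$.

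The main (mild) obstacle is exactly this cut-vector construction: it requires that each edge of $T$ be the unique edge across some vertex cut. This is precisely the property characterizing trees, and it fails for graphs with cycles. Once that observation is in place, both directions are one-line Loewner-order arguments.
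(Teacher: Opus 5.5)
Your proposal is correct and follows essentially the same route as the paper. Sufficiency comes from the edge decomposition $x^\top L_H x = \sum_{e=(u,v)} w_e (x_u - x_v)^2$, which you package as conjugating $(1-\eps)I \preceq W \preceq (1+\eps)I$ by the incidence matrix $B$; necessity comes from evaluating both quadratic forms at a vector constant on each component of $T - e$, so that the Laplacian ratio collapses to $w_e$. Your added remark is also accurate: sufficiency holds for any graph, and only necessity uses the fact that every edge of a tree is a bridge.
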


\begin{proof}
For a tree, the Laplacian quadratic form decomposes as
$x^\top L_H x = \sum_{e=(u,v)} w_e (x_u - x_v)^2$.
Each term $(x_u - x_v)^2$ is independently controllable: for any
$e_0 = (u_0, v_0) \in E(T)$, one can construct $x \in \R^n$ with
$x_{u_0} \neq x_{v_0}$ and $x_u = x_v$ for all other edges $e = (u,v) \neq e_0$
(this is possible precisely because $T$ is a tree, so removing $e_0$ disconnects $T$
into two components; set $x$ to different constants on each component).
For such~$x$, the Laplacian ratio reduces to $w_{e_0}$. The forward direction is
immediate; the reverse follows from the decomposition.
\end{proof}

\begin{corollary}\label{cor:tree-sharp}
For any tree $T$ and any $(1\pm\eps)$-Laplacian sparsifier $H$ that retains all edges of
$T$, we have $|\lambda_1(A_H) - \lambda_1(A_T)| \leq \eps\,\lambda_1(A_T)$.
\end{corollary}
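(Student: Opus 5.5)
The plan is to reduce Corollary~\ref{cor:tree-sharp} to Theorem~\ref{thm:pf-sharp} by way of Proposition~\ref{prop:tree}. Let $H$ be a $(1\pm\eps)$-Laplacian sparsifier of the tree $T$ that retains all edges of $T$. We read ``retains all edges'' as saying that $H$ lives on the same vertex set and the same edge set as $T$, with positive weights $(w_e)_{e\in E(T)}$. Since $H$ is a sparsifier, its support should not contain edges outside $E(T)$. The quick check is that an extra edge $f=(a,b)\notin E(T)$ would contribute a positive amount to $x^\top L_H x$ on some $x$ that is constant on every edge of $T$. Because $T$ is connected, any such $x$ is constant on all of $V$, so this direction alone is not decisive and needs a short separate argument (see the end).

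\textbf{Main step.} Once $H$ is a reweighting of $T$ on $E(T)$ with $(1-\eps)L_T\preceq L_H\preceq(1+\eps)L_T$, the forward direction of Proposition~\ref{prop:tree} gives $w_e\in[1-\eps,1+\eps]$ for every $e\in E(T)$. So $H$ meets Definition~\ref{def:reweight} with $G=T$, and $T$ is connected. Theorem~\ref{thm:pf-sharp} then yields $(1-\eps)\lambda_1(A_T)\le\lambda_1(A_H)\le(1+\eps)\lambda_1(A_T)$, which is the claim. The per-edge test vectors in Proposition~\ref{prop:tree} are what let us use Perron--Frobenius monotonicity in place of the $\Delta$-dependent Laplacian argument of Theorem~\ref{thm:abs-det}.

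\textbf{Main obstacle: extra edges in $H$.} The only delicate point is excluding support outside $E(T)$. The argument I would try first uses the fact that $\ker L_T=\mathrm{span}(\mathbf 1)$ and that the sandwich inequality forces $\ker L_H=\ker L_T$; this alone does not exclude extra edges.

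The fallback is to take as hypothesis what the phrase means: $E(H)=E(T)$, as in Definition~\ref{def:reweight}. I would state this reading explicitly, since it is exactly the setting of Proposition~\ref{prop:tree}.

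With that convention fixed, the proof is the two-line composition of Proposition~\ref{prop:tree} and Theorem~\ref{thm:pf-sharp} described above.
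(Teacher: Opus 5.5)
Your proof is correct and is the paper's argument: the paper proves the corollary by composing Proposition~\ref{prop:tree} (the sandwich forces $w_e\in[1-\eps,1+\eps]$) with Theorem~\ref{thm:pf-sharp}, and it tacitly takes $E(H)=E(T)$ just as you do, since Proposition~\ref{prop:tree} is stated for that setting. Your worry about extra edges is justified, and the reading $E(H)=E(T)$ is genuinely necessary, not just convenient: take two stars with $m$ leaves whose hubs $h_1,h_2$ are joined through a middle vertex $c$, reweight the leaf edges to $1+\eps$ and the edges $h_1c$, $ch_2$ to $1-\eps$, and add the non-tree edge $h_1h_2$ with weight $\eps$; since $L_{h_1h_2}\preceq 2(L_{h_1c}+L_{ch_2})$ this is a valid $(1\pm\eps)$ sandwich, yet $\lambda_1(A_H)>(1+\eps)\lambda_1(A_T)=(1+\eps)\sqrt{m+2}$ whenever $(1+\eps)\sqrt{m+2}>8$.
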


\begin{proof}
Combine Proposition~\ref{prop:tree} with Theorem~\ref{thm:pf-sharp}.
\end{proof}

\section{Refined Deterministic Bound via Quadratic Form Decomposition}%
\label{sec:refined-det}

The deterministic bound $|\lamH - \lamG| \leq \eps(2\Delta - \lamG)$ arises from
bounding $D_H - D_G$ and $L_H - L_G$ independently. We now prove a refined bound
that exploits the correlation between these perturbations.

\begin{theorem}[Eigenvector-weighted deterministic bound]\label{thm:refined-det}
Let $H$ be a $(1\pm\eps)$-Laplacian sparsifier of~$G$. Let $\vone$ be the unit Perron
eigenvector of $\AG$ and let $w$ be the unit eigenvector of $\AH$ achieving $\lamH$.
Then:
\begin{enumerate}[label=(\alph*),nosep]
\item $\lamH - \lamG \leq 2\eps\,\alpha_w(G)$,
where $\alpha_w(G) := w^\top D_G w = \sum_i d_i w_i^2$.
\item $\lamG - \lamH \leq 2\eps\,\alpha(G) = 2\eps\,\vone^\top D_G\vone$.
\end{enumerate}
The quantity $\alpha_w(G) \leq \Delta$ always, but for delocalized eigenvectors
$\alpha_w(G) = O(\Delta \cdot K^2)$ independently of $n$.
\end{theorem}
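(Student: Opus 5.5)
The plan is to rerun the proofs of Theorems~\ref{thm:det-upper} and~\ref{thm:gamma-lower}. This time we do not replace the degree quadratic forms by $\Delta$; each is evaluated at the specific test vector used. Only two facts are needed. The first is the Loewner sandwich $D_H - (1+\eps)(D_G - \AG) \preceq \AH \preceq D_H - (1-\eps)(D_G-\AG)$, which follows from $\AH = D_H - \LH$ and the sparsifier guarantee. The second is Lemma~\ref{lem:degree}. Because $D_H$ and $D_G$ are diagonal with nonnegative entries, the lemma gives $(1-\eps)\,x^\top D_G x \le x^\top D_H x \le (1+\eps)\,x^\top D_G x$ for every vector $x$, not only for coordinate vectors.

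For part~(a), I will test the upper sandwich against the top eigenvector $w$ of $\AH$. This gives
\[
\lamH = w^\top \AH w \le (1+\eps)\,w^\top D_G w - (1-\eps)\,w^\top D_G w + (1-\eps)\,w^\top \AG w
= 2\eps\,\alpha_w(G) + (1-\eps)\,w^\top \AG w .
\]
Since $w$ is a unit vector, the Rayleigh quotient bound gives $w^\top \AG w \le \lamG$. Hence $\lamH - \lamG \le 2\eps\,\alpha_w(G) - \eps\lamG \le 2\eps\,\alpha_w(G)$, where the last step uses $\lamG \ge 0$.

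For part~(b), I will test the lower sandwich against $\vone$, exactly as in the proof of Theorem~\ref{thm:gamma-lower}. This gives $\lamH \ge \vone^\top \AH \vone \ge (1+\eps)\lamG - 2\eps\,\alpha(G)$. Rearranging, $\lamG - \lamH \le 2\eps\,\alpha(G) - \eps\lamG \le 2\eps\,\alpha(G)$. In both parts the dropped $-\eps\lamG$ term only helps, so the stated bounds are slightly loose but correct.

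For the closing remarks, $\alpha_w(G) = \sum_i d_i w_i^2$ is a convex combination of the degrees, because $\sum_i w_i^2 = 1$. Therefore $\alpha_w(G) \le \Delta$.

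The delocalized claim is the main obstacle. It concerns the eigenvector $w$ of $H$, whereas Definition~\ref{def:deloc} constrains $\vone$ of $G$. I will read the statement as assuming $\|w\|_\infty \le K/\sqrt{n}$. Under that assumption,
\[
\alpha_w(G) \le \frac{K^2}{n}\sum_i d_i = K^2\,\bar d \le K^2\Delta ,
\]
where $\bar d$ is the average degree, and this bound is independent of $n$. The same computation shows that a $K$-delocalized $\vone$ gives $\alpha(G) \le K^2 \bar d$ in part~(b). Transferring delocalization from $\vone$ to $w$ would require a Davis--Kahan-type argument using $\delta_{\mathrm{gap}}$. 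That argument is not needed for the deterministic inequalities (a) and (b), so I will present the $\alpha_w$ estimate as conditional on delocalization of $w$.
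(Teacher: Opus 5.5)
Your proof of (a) and (b) is correct and is essentially the paper's argument: you test the Loewner sandwich from Theorems~\ref{thm:det-upper} and~\ref{thm:gamma-lower} against $w$ and $\vone$ respectively, then drop the $-\eps\lamG$ term using $\lamG \ge 0$. For the closing remark, the caveat about transferring delocalization from $\vone$ to $w$ is unnecessary: any unit vector satisfies $1 \le n\|\cdot\|_\infty^2$, which forces $K \ge 1$, so $\alpha_w(G) \le \Delta \le K^2\Delta$ under either reading of ``delocalized'' (your sharper bound $\alpha_w(G) \le K^2\bar d$ is a genuine strengthening, and that one does need $w$ itself to be delocalized).
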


\begin{proof}
\emph{Part (a).} From the proof of Theorem~\ref{thm:det-upper}:
\[
w^\top \AH w \leq 2\eps\,w^\top D_G w + (1-\eps)\,w^\top \AG w
\leq 2\eps\,\alpha_w(G) + (1-\eps)\lamG,
\]
so $\lamH = w^\top \AH w \leq \lamG + 2\eps\,\alpha_w(G) - \eps\lamG
= \lamG + \eps(2\alpha_w(G) - \lamG)$.
Since $\eps\lamG \geq 0$, we obtain the cleaner bound
$\lamH - \lamG \leq 2\eps\alpha_w(G)$.

\emph{Part (b).} From the proof of Theorem~\ref{thm:gamma-lower}:
\[
\vone^\top \AH \vone \geq -2\eps\alpha(G) + (1+\eps)\lamG
= \lamG - \eps(2\alpha(G) - \lamG).
\]
So $\lamH \geq \lamG - 2\eps\alpha(G) + \eps\lamG \geq \lamG - 2\eps\alpha(G)$.
\end{proof}

\section{Probabilistic Bounds via Matrix Concentration}\label{sec:bernstein}

The deterministic bound $|\lamH - \lamG| \leq \eps(2\Delta - \lamG)$ treats the
perturbation $\AH - \AG$ as adversarial. When $H$ is constructed via SS sampling,
we can exploit the randomness to obtain high-probability bounds.

\subsection{Matrix Bernstein Operator Norm Bound}

\begin{lemma}[Effective resistance bounds for edges]\label{lem:re-bounds}
Let $G$ be a connected unweighted graph with maximum degree~$\Delta$.
For every edge $e = (u,v) \in E$:
\[
\frac{1}{\min(d_u, d_v)} \;\leq\; R_e \;\leq\; 1.
\]
In particular, $1/\Delta \leq R_e \leq 1$.
\end{lemma}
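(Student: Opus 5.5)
Both inequalities follow from standard facts about effective resistance in an unweighted graph. We treat the upper and lower bounds separately.

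\emph{Upper bound $R_e \le 1$.} We use Rayleigh's monotonicity principle, or equivalently the Thomson (energy-minimization) characterization of effective resistance. The resistance $R_e$ is the minimum energy $\sum_{f\in E} \theta_f^2$ over all unit $u$--$v$ flows $\theta$. Routing the whole unit of flow along the single edge $e$ is a feasible flow with energy $1$, so $R_e \le 1$. Dually, $R_e = \sup_x (x_u-x_v)^2 / x^\top \LG x$. For any $x$, the quadratic form satisfies $x^\top \LG x \ge (x_u - x_v)^2$, since the edge $e$ alone contributes that term and every other term is nonnegative. This gives the same bound.

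\emph{Lower bound $R_e \ge 1/\min(d_u,d_v)$.} Without loss of generality take $d_u \le d_v$. We use the Nash-Williams inequality, or give a direct flow argument. Any unit flow from $u$ to $v$ must leave $u$ through its $d_u$ incident edges, and their flows sum to $1$. By Cauchy--Schwarz, the energy on these edges is at least $(\sum \theta_f)^2/d_u = 1/d_u$. Since Thomson's principle gives $R_e = \min_\theta \sum_f \theta_f^2$, we get $R_e \ge 1/d_u$. The same argument applied at $v$ gives $R_e \ge 1/d_v$, so in fact $R_e \ge 1/\min(d_u,d_v)$.

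If we prefer to stay within the paper's quadratic-form language, the lower bound can also be shown through the dual formulation. Choose the test potential $x = \be_u$; then $(x_u - x_v)^2 = 1$ and $x^\top \LG x = d_u$. The variational characterization $R_e = \sup_x (x_u-x_v)^2/(x^\top \LG x)$ (over $x \perp \mathbf{1}$ after centering, which changes neither quantity) then gives $R_e \ge 1/d_u$. The only care needed is to justify this supremum formula from $R_e = \bb_e^\top \LG^+ \bb_e$, which is the main technical point. It follows from Cauchy--Schwarz in the $\LG$-inner product: $(\bb_e^\top x)^2 = (\bb_e^\top \LG^+ \LG x)^2 \le (\bb_e^\top \LG^+ \bb_e)(x^\top \LG x)$, with equality at $x = \LG^+\bb_e$. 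Here we use connectivity of $G$, so that $\bb_e$ lies in the range of $\LG$.

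Finally, the ``in particular'' clause follows from $\min(d_u,d_v) \le \Delta$.
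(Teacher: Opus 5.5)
Your proof is correct. The upper bound is exactly the paper's argument: Thomson's principle with the one-edge flow. The lower bound, however, takes a different route.

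The paper works with the \emph{actual} harmonic potential $V$, with $V(u)=1$ and $V(v)=0$. It invokes the maximum principle to place every other potential in $[0,1]$. It then reads off $1/R_e$ as the current out of $u$, namely $\sum_{w\in N(u)}(1-V(w))\le d_u$.

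You instead use variational bounds, in two forms:
\begin{itemize}
\item Cauchy--Schwarz on the star cut at $u$ inside Thomson's principle, which is a one-cut Nash-Williams bound.
\item The formula $R_e=\sup_x (x_u-x_v)^2/x^\top L_G x$ evaluated at the test potential $x=\be_u$, which is the dual form of the same bound.
\end{itemize}
You derive this formula directly from the paper's definition $R_e=\bb_e^\top L_G^+\bb_e$, via Cauchy--Schwarz in the $L_G$-seminorm, using connectivity so that $L_G^+L_G$ fixes $\bb_e$. This makes your argument self-contained. It needs neither the maximum principle nor the electrical identification of $1/R_e$ as a current.

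Your route also avoids a small slip in the paper's write-up. The paper asserts $V(w)\in(0,1)$ and that every current leaving $u$ is positive. Both fail in general: a pendant vertex attached only to $u$ has potential exactly $1$, and $v$ itself is a neighbor of $u$ with potential $0$. The slip is harmless, since only $V(w)\ge 0$ is needed.

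What the paper's argument buys in return is a more physical picture. It gives the exact identity $1/R_e=\sum_{w\in N(u)}(1-V(w))$, so the slack in $R_e\ge 1/d_u$ is visible edge by edge.
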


\begin{proof}
\emph{Upper bound.}
The effective resistance $R_e$ equals the minimum energy of a unit $(u,v)$-flow.
Routing one unit of flow along the single edge~$e$ (which has unit conductance) gives
a feasible flow with energy~$1$. Therefore $R_e \leq 1$.

\emph{Lower bound.}
Apply a unit potential difference across $e$: set the voltage at $u$ to~$1$ and
the voltage at~$v$ to~$0$. By the maximum principle for harmonic functions on graphs,
every other vertex $w \in V \setminus \{u,v\}$ has voltage $V(w) \in (0,1)$.
By Ohm's law (each edge has unit resistance), the current flowing out of~$u$
along edge $(u,w)$ is $V(u) - V(w) = 1 - V(w)$. Since $V(w) \in (0,1)$ for every
neighbor~$w$ of~$u$, each such current is positive and at most~$1$. The total
current out of~$u$ (which equals $1/R_e$ by definition of effective resistance) is
\[
\frac{1}{R_e}
= \sum_{w \in N(u)} \bigl(1 - V(w)\bigr)
\leq \sum_{w \in N(u)} 1
= d_u.
\]
Therefore $R_e \geq 1/d_u$. By symmetry (applying the same argument at~$v$
with voltages reversed), $R_e \geq 1/d_v$. Combining:
$R_e \geq 1/\min(d_u, d_v) \geq 1/\Delta$.
\end{proof}

\begin{theorem}[Universal operator norm bound]\label{thm:bernstein}
Let $H$ be the output of the effective-resistance sampling scheme
(Definition~\ref{def:ss}) with
$q = \lceil cn\log n/\eps^2 \rceil$ samples, $c \geq 4$, and $\eps \in (0, 1/2]$.
Then with probability at least $1 - 2n^{-1/2}$:
\[
\|\AH - \AG\|_2 \leq \frac{2\eps\Delta}{\sqrt{c}}.
\]
\end{theorem}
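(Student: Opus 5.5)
We write $\Phi=\AH-\AG$ as a sum of independent, mean-zero random matrices, one per sample, and apply the Matrix Bernstein inequality.

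\textbf{Decomposition.} Let $e_1,\dots,e_q$ be the i.i.d.\ draws from $(p_e)$, and for an edge $e=(u,v)$ set $E_e:=\be_u\be_v^\top+\be_v\be_u^\top$. Then
\[
\AH=\sum_{t=1}^q \frac{1}{q\,p_{e_t}}\,E_{e_t},
\]
so $\Phi=\sum_t X_t$ with $X_t:=\frac{1}{q}\bigl(E_{e_t}/p_{e_t}-\AG\bigr)$. Each $X_t$ has mean zero because $\E[E_{e_t}/p_{e_t}]=\sum_e E_e=\AG$.

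\textbf{Uniform bound.} By Lemma~\ref{lem:re-bounds}, $p_e=R_e/(n-1)\ge 1/(\Delta(n-1))$. Since $\|E_e\|_2=1$, we get
\[
\|E_e/p_e\|_2\le \Delta(n-1),
\]
and also $\|\AG\|_2=\lamG\le\Delta$. Hence $\|X_t\|_2\le L:=2\Delta n/q$.

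\textbf{Variance.} Note $E_e^2=\be_u\be_u^\top+\be_v\be_v^\top$. Therefore
\[
\E\bigl[(E_{e}/p_{e})^2\bigr]=\sum_e \frac{E_e^2}{p_e}\preceq \Delta(n-1)\sum_e E_e^2=\Delta(n-1)D_G,
\]
whose norm is at most $\Delta^2(n-1)$. Subtracting $\AG^2\succeq 0$ only decreases the variance, so
\[
\Bigl\|\sum_t\E X_t^2\Bigr\|_2\le \frac{1}{q}\,\Delta^2 n=:\sigma^2.
\]
With $q\ge cn\log n/\eps^2$ this gives $\sigma^2\le \eps^2\Delta^2/(c\log n)$ and $L\le 2\eps^2\Delta/(c\log n)$.

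\textbf{Bernstein step.} Matrix Bernstein gives
\[
\Pr\bigl[\|\Phi\|_2\ge s\bigr]\le 2n\exp\!\left(\frac{-s^2/2}{\sigma^2+Ls/3}\right).
\]
Take $s=2\eps\Delta/\sqrt c$. Then $s^2/2=2\eps^2\Delta^2/c$. For the denominator, $\sigma^2=\eps^2\Delta^2/(c\log n)$ and
\[
Ls/3\le \frac{4\eps^3\Delta^2}{3c^{3/2}\log n}\le \frac{\eps^2\Delta^2}{3c\log n},
\]
using $\eps\le 1/2$ and $c\ge 4$. So the exponent is at least
\[
\frac{2\eps^2\Delta^2/c}{(4/3)\,\eps^2\Delta^2/(c\log n)}=\tfrac32\log n .
\]
The failure probability is therefore at most $2n\cdot n^{-3/2}=2n^{-1/2}$, which is the claimed bound.

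\textbf{Main obstacle.} The delicate part is the constant bookkeeping. The variance proxy must come out with the $\log n$ in the denominator so that the exponent reaches $\tfrac32\log n$; this is where the lower bound $R_e\ge 1/\Delta$ from Lemma~\ref{lem:re-bounds} is essential. I would also double-check the form of $\sum_e E_e^2/p_e$ against $D_G$. If the constants fall slightly short, one can fall back on the sharper estimate $\|\AG\|_2\le\Delta$ in the variance bound, or absorb the loss using $c\ge 4$.
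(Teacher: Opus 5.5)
Your proposal is correct and follows essentially the same route as the paper. It uses the same per-sample decomposition $X_t=\frac1q\bigl(E_{e_t}/p_{e_t}-\AG\bigr)$, the same almost-sure and variance bounds driven by $R_e\ge 1/\Delta$ (you write $n$ where the paper keeps $n-1$, which is harmless), and the same Matrix Bernstein step with $t=2\eps\Delta/\sqrt c$, giving exponent $\tfrac32\log n$ and failure probability $2n^{-1/2}$. The constants close exactly as you computed, since $4\eps/\sqrt c\le 1$ holds (with equality at $\eps=1/2$, $c=4$), so the fallback options in your last paragraph are not needed.
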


\begin{proof}
\textbf{Step 1: Independent sum decomposition.}
Each of $q$ i.i.d.\ draws selects edge $e_k = (u_k, v_k)$ with probability
$p_{e_k} = R_{e_k}/(n-1)$. Define
\[
Y_k = \frac{1}{q\,p_{e_k}}\,P_{e_k}, \qquad
P_e = \be_u \be_v^\top + \be_v \be_u^\top.
\]
Then $\AH = \sum_{k=1}^q Y_k$ and $\E[Y_k] = \AG/q$ (by unbiasedness).
Setting $X_k = Y_k - \E[Y_k]$:
\[
\AH - \AG = \sum_{k=1}^q X_k, \qquad \E[X_k] = 0, \quad X_1, \ldots, X_q \text{ i.i.d.}
\]

\textbf{Step 2: Almost-sure bound $B$.}
By Lemma~\ref{lem:re-bounds}, $1/\Delta \leq R_e \leq 1$ for every edge~$e$.

From these bounds: $p_e = R_e/(n-1) \leq 1/(n-1)$ and
$p_e \geq 1/((n-1)\Delta)$, so $p_{\min} \geq 1/((n-1)\Delta)$.
Define $B := 2(n-1)\Delta/q$.
Since $\|P_e\|_2 = 1$, we have $\|Y_k\|_2 = 1/(qp_{e_k})$.
By the triangle inequality,
\[
\|X_k\|_2 \leq \|Y_k\|_2 + \|\E[Y_k]\|_2
= \frac{1}{qp_{e_k}} + \frac{\lamG}{q}.
\]
Now $1/(qp_{e_k}) \leq (n-1)\Delta/q$ and $\lamG/q \leq \Delta/q \leq (n-1)\Delta/q$
(using $\lamG \leq \Delta$ and $n - 1 \geq 1$), so
\[
\|X_k\|_2 \leq \frac{(n-1)\Delta}{q} + \frac{(n-1)\Delta}{q} = B.
\]

\textbf{Step 3: Variance proxy $v$.}
Since $X_k = Y_k - \E[Y_k]$, we have
$\E[X_1^2] \preceq \E[Y_1^2]$.
Using $P_e^2 = \be_u\be_u^\top + \be_v\be_v^\top$:
\[
\E[Y_1^2] = \frac{1}{q^2}\sum_e \frac{1}{p_e}(\be_u\be_u^\top + \be_v\be_v^\top).
\]
This is diagonal with $(i,i)$ entry $(n-1)/(q^2) \cdot \sum_{e \ni i} 1/R_e$.
Using $R_e \geq 1/\Delta$: $\sum_{e \ni i} 1/R_e \leq d_i \cdot \Delta \leq \Delta^2$, so
\[
\sigma^2 := q\,\|\E[X_1^2]\|_2 \leq v := \frac{(n-1)\Delta^2}{q}.
\]
Substituting $q = \lceil cn\log n/\eps^2 \rceil \geq cn\log n/\eps^2$:
\begin{equation}\label{eq:v-bound}
v \leq \frac{(n-1)\eps^2\Delta^2}{cn\log n}
< \frac{\eps^2\Delta^2}{c\log n}.
\end{equation}

\textbf{Step 4: Matrix Bernstein application.}
By the Matrix Bernstein inequality~\citep{tropp2012user}: for i.i.d.\ mean-zero
symmetric $n \times n$ matrices with $\|X_k\|_2 \leq B$ a.s.,
\[
\Pr\!\left[\left\|\textstyle\sum_k X_k\right\|_2 \geq t\right]
\leq 2n\exp\!\left(-\frac{t^2/2}{\sigma^2 + Bt/3}\right).
\]
Set $t = 2\eps\Delta/\sqrt{c}$. We verify the two key quantities.

\medskip\noindent\emph{Ratio $t^2/(2v)$.}
\[
\frac{t^2}{2v}
= \frac{4\eps^2\Delta^2/c}{2\eps^2\Delta^2/(c\log n)}
= 2\log n.
\]

\medskip\noindent\emph{Ratio $Bt/3$ vs.\ $v$.}
Recall $B = 2(n-1)\Delta/q$ and $v = (n-1)\Delta^2/q$, so
\[
\frac{Bt}{3v}
= \frac{2(n-1)\Delta \cdot 2\eps\Delta/(q\sqrt{c})}{3(n-1)\Delta^2/q}
= \frac{4\eps}{3\sqrt{c}}.
\]
For $\eps \leq 1/2$ and $c \geq 4$: $Bt/(3v) \leq 4/(3 \cdot 2 \cdot 2) = 1/3$.
Therefore $\sigma^2 + Bt/3 \leq v + v/3 = (4/3)v$, and
\[
\frac{t^2/2}{\sigma^2 + Bt/3}
\geq \frac{t^2/2}{(4/3)v}
= \frac{3}{4} \cdot \frac{t^2}{2v}
= \frac{3}{4} \cdot 2\log n
= \frac{3}{2}\log n.
\]
The failure probability is therefore at most
\[
2n \exp\!\left(-\tfrac{3}{2}\log n\right)
= 2n \cdot n^{-3/2} = 2n^{-1/2}. \qedhere
\]
\end{proof}

\begin{corollary}[Eigenvalue distortion bound]\label{cor:weyl}
Under the hypotheses of Theorem~\ref{thm:bernstein}, with probability at least
$1 - 2n^{-1/2}$:
\[
|\lamH - \lamG| \leq \frac{2\eps\Delta}{\sqrt{c}}.
\]
\end{corollary}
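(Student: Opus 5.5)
The corollary follows from Theorem~\ref{thm:bernstein} combined with Weyl's inequality for symmetric matrices. The plan is to work on the event $\mathcal{E}$ on which the operator-norm bound $\|\AH - \AG\|_2 \leq 2\eps\Delta/\sqrt{c}$ holds. By Theorem~\ref{thm:bernstein}, $\mathcal{E}$ has probability at least $1 - 2n^{-1/2}$, so it only remains to turn the operator-norm bound into a bound on the top eigenvalue, deterministically on $\mathcal{E}$.

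Both $\AG$ and $\AH$ are real symmetric; $\AH$ is symmetric because it is a weighted sum of the symmetric matrices $P_e$. Weyl's inequality therefore applies and gives $|\lambda_k(\AH) - \lambda_k(\AG)| \leq \|\AH - \AG\|_2$ for every $k$, and we use it with $k=1$. To keep the argument self-contained, I would derive the $k=1$ case directly from the Rayleigh quotient, in the same style as the proofs of Theorems~\ref{thm:gamma-lower} and~\ref{thm:det-upper}. Write $\Phi = \AH - \AG$.
\begin{itemize}[nosep]
\item \emph{Upper bound.} Let $w$ be a top unit eigenvector of $\AH$. Then $\lamH = w^\top \AG w + w^\top \Phi w \leq \lamG + \|\Phi\|_2$.
\item \emph{Lower bound.} Using $\vone$, we get $\lamH \geq \vone^\top \AH \vone = \lamG + \vone^\top \Phi \vone \geq \lamG - \|\Phi\|_2$.
\end{itemize}
Together these give $|\lamH - \lamG| \leq \|\Phi\|_2 \leq 2\eps\Delta/\sqrt{c}$ on $\mathcal{E}$.

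The one step that needs care is the identification of $\lamG$ and $\lamH$ with $\lambda_1$, the largest algebraic eigenvalue, which is how the Rayleigh quotient characterizes them. The paper defines them this way. By Perron--Frobenius, and since both matrices are nonnegative, $\lambda_1$ also equals the spectral radius, so the notation agrees with the $\rho(\cdot)$ used in Section~\ref{sec:reweight}. This holds even if $H$ happens to be disconnected, because the Rayleigh characterization does not use irreducibility. Given this, there is no real obstacle: the probability statement transfers unchanged from Theorem~\ref{thm:bernstein}, since the eigenvalue bound holds deterministically on the same event.
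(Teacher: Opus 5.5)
Your proposal is correct and follows the paper's proof, which applies Weyl's inequality $|\lamH - \lamG| \leq \|\AH - \AG\|_2$ on the high-probability event of Theorem~\ref{thm:bernstein}. Your two-sided Rayleigh-quotient argument is simply a self-contained proof of the $k=1$ case of that same inequality.
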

\begin{proof}
By Weyl's inequality, $|\lamH - \lamG| \leq \|\AH - \AG\|_2$.
\end{proof}

\subsection{Second-Order Resolvent Remainder Bound}

When the spectral gap $\delta_{\mathrm{gap}} = \lambda_1 - \lambda_2$ is large
relative to the perturbation, we can refine the bound using the known eigenvector $\vone$.

\begin{theorem}[Resolvent remainder bound]\label{thm:resolvent}
Let $\Phi = \AH - \AG$ and suppose $\|\Phi\|_2 < \delta_{\mathrm{gap}}/2$. Then:
\[
0 \leq (\lamH - \lamG) - \vone^\top \Phi\, \vone
\leq \frac{\|\Phi\vone\|^2}{\delta_{\mathrm{gap}} - 2\|\Phi\|_2}.
\]
In particular, $\lamH \geq \lamG + \vone^\top \Phi\vone$ always.

The condition $\|\Phi\|_2 < \delta_{\mathrm{gap}}/2$ holds with probability at least
$1 - 2n^{-1/2}$ (by Theorem~\ref{thm:bernstein}) whenever
$4\eps\Delta/\sqrt{c} < \delta_{\mathrm{gap}}$.
\end{theorem}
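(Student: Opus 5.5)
The plan is to compress $\AH$ onto the Perron direction $\vone$ and its orthogonal complement, and then use the Schur-complement (Feshbach) identity for the top eigenvalue. The lower bound is immediate. By the Rayleigh quotient, $\lamH \geq \vone^\top \AH \vone = \vone^\top \AG \vone + \vone^\top \Phi \vone = \lamG + \vone^\top\Phi\vone$. This gives the left inequality and the ``in particular'' clause, and it needs no hypothesis on $\|\Phi\|_2$.

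For the upper bound, let $W = \vone^\perp$ and let $P_W$ be the orthogonal projector onto $W$. In the orthonormal decomposition $\R^n = \operatorname{span}(\vone) \oplus W$, I will write $\AH$ in block form with three pieces:
\begin{itemize}[nosep]
\item the scalar $a = \vone^\top \AH \vone = \lamG + \vone^\top\Phi\vone$;
\item the off-diagonal vector $b = P_W \AH \vone$;
\item the compression $C = P_W \AH P_W$, viewed as an operator on $W$.
\end{itemize}
Because $\AG\vone = \lamG\vone \in \operatorname{span}(\vone)$, we have $P_W\AG\vone = 0$. Hence $b = P_W\Phi\vone$ and $\|b\| \leq \|\Phi\vone\|$. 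Moreover $C = P_W\AG P_W + P_W\Phi P_W$, and on $W$ the first term has top eigenvalue $\lambda_2$. So by Weyl, $\lambda_{\max}(C) \leq \lambda_2 + \|\Phi\|_2$.

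Let $\mu = \lamH$. By Weyl, $\mu \geq \lamG - \|\Phi\|_2$. Therefore
\[
\mu - \lambda_{\max}(C) \geq \delta_{\mathrm{gap}} - 2\|\Phi\|_2 > 0,
\]
using the hypothesis $\|\Phi\|_2 < \delta_{\mathrm{gap}}/2$. In particular $\mu I - C$ is positive definite on $W$.

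Now write the top eigenvector as $w = \alpha\vone + y$ with $y \in W$. The eigen-equation $\AH w = \mu w$ splits into two equations:
\begin{itemize}[nosep]
\item the $\vone$-component: $\mu\alpha = a\alpha + b^\top y$;
\item the $W$-component: $(\mu I - C)y = \alpha b$.
\end{itemize}
If $\alpha = 0$, then invertibility of $\mu I - C$ forces $y = 0$, which contradicts $w \neq 0$. So $\alpha \neq 0$. Solving the second equation for $y$ and substituting into the first gives the Feshbach identity
\[
\lamH - \lamG - \vone^\top\Phi\vone = \mu - a = b^\top(\mu I - C)^{-1} b .
\]
Since $(\mu I - C)^{-1}$ is positive definite with operator norm at most $1/(\delta_{\mathrm{gap}} - 2\|\Phi\|_2)$, the right-hand side lies in $\bigl[0, \|b\|^2/(\delta_{\mathrm{gap}} - 2\|\Phi\|_2)\bigr]$. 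Combined with $\|b\| \leq \|\Phi\vone\|$, this is exactly the claimed two-sided bound.

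The only delicate step is ensuring that $\mu$ is separated from the spectrum of $C$, so that the resolvent exists and has the right sign. This is where the factor $2\|\Phi\|_2$ comes from: one copy from $\mu \geq \lamG - \|\Phi\|_2$ and one from $\lambda_{\max}(C) \leq \lambda_2 + \|\Phi\|_2$.

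The final probabilistic sentence is then direct. By Theorem~\ref{thm:bernstein}, with probability at least $1 - 2n^{-1/2}$ we have $\|\Phi\|_2 \leq 2\eps\Delta/\sqrt{c}$. This is less than $\delta_{\mathrm{gap}}/2$ whenever $4\eps\Delta/\sqrt{c} < \delta_{\mathrm{gap}}$.
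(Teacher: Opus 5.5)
Your proposal is correct and follows essentially the same route as the paper. Both use a Schur-complement (Feshbach) reduction onto $\vone$ and its orthogonal complement: the two Weyl estimates $\lamH \geq \lamG - \|\Phi\|_2$ and $\lambda_{\max}(C) \leq \lambda_2 + \|\Phi\|_2$ produce the denominator $\delta_{\mathrm{gap}} - 2\|\Phi\|_2$, and $\|b\| \leq \|\Phi\vone\|$ produces the numerator. Your explicit check that $\alpha \neq 0$ and your Rayleigh-quotient proof of the lower bound fill in two points the paper leaves implicit; in particular, the paper's argument only yields $\lamH \geq \lamG + \vone^\top\Phi\vone$ under the hypothesis $\|\Phi\|_2 < \delta_{\mathrm{gap}}/2$, even though the statement says ``always''.
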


\begin{proof}
Write $\AH = \AG + \Phi$ in the eigenbasis $\{\vone, v_2, \ldots, v_n\}$ of $\AG$.
The eigenvalue equation $(\AG + \Phi)u = \lamH u$ decomposes into
a $1 \times 1$ block coupled to the $(n-1)$-dimensional
orthogonal complement. Eliminating the complement via the Schur complement gives:
\[
\lamH = \lambda_1 + \Phi_{11} + \mathbf{e}^\top\!
\left(\lamH I - \Lambda_\perp - \Phi_\perp\right)^{-1}\!\mathbf{e}
\]
where $\Phi_{11} = \vone^\top \Phi \vone$,
$\mathbf{e} = (v_2^\top \Phi\vone, \ldots, v_n^\top \Phi\vone)^\top$,
$\Lambda_\perp = \mathrm{diag}(\lambda_2, \ldots, \lambda_n)$,
and $\Phi_\perp$ is the restriction of $\Phi$ to the orthogonal complement
(cf.~\citet{stewart1990matrix}, Chapter~V).

When $\|\Phi\|_2 < \delta_{\mathrm{gap}}/2$:
$\lamH \geq \lambda_1 - \|\Phi\|_2 > \lambda_2 + \|\Phi\|_2 \geq \lambda_j + \|\Phi_\perp\|_2$
for all $j \geq 2$. The resolvent is therefore positive definite with minimum
eigenvalue $\geq \delta_{\mathrm{gap}} - 2\|\Phi\|_2 > 0$.
Thus:
\[
0 \leq \lamH - \lambda_1 - \Phi_{11}
\leq \frac{\|\mathbf{e}\|^2}{\delta_{\mathrm{gap}} - 2\|\Phi\|_2}
\leq \frac{\|\Phi\vone\|^2}{\delta_{\mathrm{gap}} - 2\|\Phi\|_2}
\]
where the last step uses $\|\mathbf{e}\|^2 = \|\Phi\vone\|^2 - \Phi_{11}^2 \leq \|\Phi\vone\|^2$.
\end{proof}

\begin{proposition}[Expected perturbation variance]\label{prop:variance}
Under SS sampling with $q = \lceil cn\log n/\eps^2 \rceil$:
\[
\E\!\left[\|\Phi\vone\|^2\right] \leq \frac{\eps^2\Delta^2}{c\log n}.
\]
\end{proposition}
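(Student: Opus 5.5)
The plan is to write $\Phi\vone$ as a sum of i.i.d.\ mean-zero vectors and use the fact that the expected squared norm of such a sum is the sum of the individual expected squared norms. This reduces everything to the single-sample second moment, which has already been computed in the proof of Theorem~\ref{thm:bernstein}.

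\textbf{Step 1: decomposition.} With $X_k = Y_k - \E[Y_k]$ as in Step~1 of the proof of Theorem~\ref{thm:bernstein}, we have $\Phi\vone = \sum_{k=1}^q X_k\vone$. The vectors $X_k\vone$ are i.i.d.\ with mean zero, so all cross terms vanish in expectation. Hence
\[
\E\|\Phi\vone\|^2 = \sum_{k=1}^q \E\|X_k\vone\|^2 = q\,\vone^\top \E[X_1^2]\,\vone \le q\,\vone^\top \E[Y_1^2]\,\vone,
\]
where the last inequality uses $\E[X_1^2] = \E[Y_1^2] - (\E Y_1)^2 \preceq \E[Y_1^2]$.

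\textbf{Step 2: the second moment.} By Step~3 of the proof of Theorem~\ref{thm:bernstein}, $\E[Y_1^2]$ is diagonal with $(i,i)$ entry
\[
\frac{n-1}{q^2}\sum_{e\ni i} \frac{1}{R_e} \le \frac{(n-1)\Delta^2}{q^2},
\]
using Lemma~\ref{lem:re-bounds} ($1/R_e \le \Delta$) and $d_i \le \Delta$. Since $\|\vone\|_2 = 1$,
\[
q\,\vone^\top \E[Y_1^2]\vone \le \frac{(n-1)\Delta^2}{q} = v,
\]
and \eqref{eq:v-bound} already gives $v < \eps^2\Delta^2/(c\log n)$ once $q \ge cn\log n/\eps^2$ is substituted.

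\textbf{Main obstacle.} There is little genuine difficulty; the argument just reuses the variance computation from Theorem~\ref{thm:bernstein}. The one point to state carefully is that the cross terms vanish: for $j \ne k$, independence and mean zero give $\E[\vone^\top X_j X_k \vone] = 0$.

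A sharper route would keep the weights $v_{1,i}^2$, giving $\sum_i v_{1,i}^2 \sum_{e\ni i} 1/R_e$. That version is the one useful later under delocalization. For the stated bound, however, the crude estimate $\max_i \sum_{e\ni i} 1/R_e \le \Delta^2$ suffices.
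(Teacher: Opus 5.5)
Your proposal is correct and follows essentially the same route as the paper. Both arguments write $\E\|\Phi\vone\|^2 = q\,\vone^\top \E[X_1^2]\,\vone$ using independence and mean zero, dominate $\E[X_1^2]$ by $\E[Y_1^2]$, and reuse the diagonal bound $(n-1)\Delta^2/q^2$ from Step~3 of Theorem~\ref{thm:bernstein} together with \eqref{eq:v-bound}. The only difference is cosmetic: you bound the quadratic form directly through the diagonal entries, where the paper passes through $\|\E[X_1^2]\|_2$.
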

\begin{proof}
Since $\Phi = \sum_{k=1}^q X_k$ with $X_k$ i.i.d.\ and mean-zero,
$\E[\Phi^\top \Phi] = q\,\E[X_1^2]$. Therefore
$\E[\|\Phi\vone\|^2] = \vone^\top \E[\Phi^\top \Phi]\vone
= q \cdot \vone^\top \E[X_1^2]\vone
\leq q\,\|\E[X_1^2]\|_2 \leq (n-1)\Delta^2/q
< \eps^2\Delta^2/(c\log n)$.
\end{proof}

\section{Distortion Bound Under Delocalization}\label{sec:delocalized}

We now prove a refined distortion bound
for graphs satisfying two structural hypotheses: eigenvector delocalization and a
sufficiently large spectral gap. These conditions hold for dense Erd\H{o}s--R\'{e}nyi
graphs, $d$-regular expanders, and balanced stochastic block models.

\subsection{Scalar Concentration of the First-Order Term}

\begin{lemma}[First-order perturbation concentration]\label{lem:first-order}
Let $G$ be $K$-delocalized and let $H$ be the SS sparsifier with
$q = \lceil cn\log n/\eps^2 \rceil$, $c \geq 4$, $\eps \in (0, 1/2]$.
There exists a threshold $n_0 = n_0(K, c)$ such that for all $n \geq n_0$,
with probability at least $1 - 2n^{-3/2}$:
\[
|\vone^\top \Phi\,\vone|
\;\leq\; \frac{4K\,\eps\,\Delta\,\sqrt{\log n}}{\sqrt{cn}}.
\]
\end{lemma}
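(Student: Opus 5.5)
The plan is to apply the \emph{scalar} Bernstein inequality to the quadratic form $\vone^\top\Phi\vone$, using the i.i.d.\ decomposition from the proof of Theorem~\ref{thm:bernstein}. Writing $\Phi=\sum_{k=1}^q X_k$ with $X_k = Y_k-\E[Y_k]$, set $Z_k := \vone^\top X_k\vone$. These are i.i.d., mean-zero real random variables, and $\vone^\top\Phi\vone = \sum_k Z_k$. Compared with the operator-norm argument, two things improve. First, there is no dimensional factor $n$ in front of the exponential. Second, both the almost-sure bound and the variance pick up factors of $1/n$ from $K$-delocalization, because $\vone^\top P_e\vone = 2v_{1,u}v_{1,v}$ has size at most $2K^2/n$.

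\textbf{Almost-sure bound.} For a draw $e=(u,v)$ we have $\vone^\top Y_k\vone = 2v_{1,u}v_{1,v}/(qp_e)$. Lemma~\ref{lem:re-bounds} gives $1/(qp_e)\le (n-1)\Delta/q$, and delocalization gives $|v_{1,u}v_{1,v}|\le K^2/n$. Hence $|\vone^\top Y_k\vone|\le 2K^2\Delta/q$. The centering term satisfies $|\vone^\top\E[Y_k]\vone| = \lamG/q\le \Delta/q$, and $K\ge 1$ since $\|\vone\|_\infty\ge 1/\sqrt n$. Together these give $|Z_k|\le b:=3K^2\Delta/q$.

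\textbf{Variance.} Bound $\E[Z_k^2]\le \E[(\vone^\top Y_k\vone)^2] = q^{-2}\sum_e 4v_{1,u}^2v_{1,v}^2/p_e$. Use $1/p_e\le (n-1)\Delta$, and bound one factor of each product by $K^2/n$:
\[
\sum_e v_{1,u}^2v_{1,v}^2 \le \frac{K^2}{n}\sum_e \tfrac12\bigl(v_{1,u}^2+v_{1,v}^2\bigr) = \frac{K^2\alpha(G)}{2n}\le \frac{K^2\Delta}{2n}.
\]
This yields $\sigma^2 := q\,\E[Z_1^2]\le 2K^2\Delta^2/q \le 2K^2\eps^2\Delta^2/(cn\log n)$. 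I expect this step to be the crux. The naive bound through $\|\E[X_1^2]\|_2$ used in Proposition~\ref{prop:variance} loses exactly the $1/n$ factor that delocalization recovers here. Getting it requires the edge-sum identity $\sum_e(v_{1,u}^2+v_{1,v}^2)=\vone^\top D_G\vone$ together with a careful use of the $p_e$ lower bound.

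\textbf{Bernstein and the threshold $n_0$.} Set $t = 4K\eps\Delta\sqrt{\log n}/\sqrt{cn}$. Then
\[
\frac{t^2}{2\sigma^2}\ge \frac{16K^2\eps^2\Delta^2\log n/(cn)}{4K^2\eps^2\Delta^2/(cn\log n)} = 4(\log n)^2 .
\]
The linear term satisfies $bt/(3\sigma^2) = O\bigl(K\eps\sqrt{\log n}/\sqrt{cn}\bigr)$ after substituting $q\ge cn\log n/\eps^2$. This ratio is below $1$ once $n$ exceeds some $n_0(K,c)$, so the Bernstein exponent is at least $\tfrac12\cdot 4(\log n)^2 = 2(\log n)^2$. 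Enlarging $n_0$ if necessary so that $2(\log n)^2\ge \tfrac32\log n$, the two-sided scalar Bernstein inequality gives failure probability at most $2\exp(-\tfrac32\log n) = 2n^{-3/2}$. The bound holds with considerable slack; only the dependence of $n_0$ on $K$ and $c$ needs to be tracked.
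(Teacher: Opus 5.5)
Your proposal is correct and follows the paper's proof essentially step for step. You use the same scalar variables $Z_k=\vone^\top X_k\vone$ and the same almost-sure bound (your $3K^2\Delta/q$ versus the paper's $(2K^2+1)\Delta/q$). Your variance estimate is also the paper's, via $v_{1,u}^2v_{1,v}^2\le\|\vone\|_\infty^2(v_{1,u}^2+v_{1,v}^2)/2$ and $\sum_e(v_{1,u}^2+v_{1,v}^2)=\alpha(G)\le\Delta$, and your choice of $t$ likewise yields exponent $2(\log n)^2$.

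One thing to tidy: you should compare the linear term $bt/3$ with the proved upper bound $U=2K^2\eps^2\Delta^2/(cn\log n)$, as the paper does, rather than with the true variance $\sigma^2$, which could be much smaller. With $U$ in the denominator your ratio $2K\eps\sqrt{\log n}/\sqrt{cn}$ is exactly right, and the conclusion follows.
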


\begin{proof}
Write $\Phi = \sum_{k=1}^q X_k$ with $X_k$ i.i.d.\ mean-zero.
Define the scalar random variables $Z_k := \vone^\top X_k \vone$. These are
i.i.d.\ with $\E[Z_k] = 0$, and $\vone^\top \Phi\,\vone = \sum_{k=1}^q Z_k$.

\medskip\noindent\textbf{Almost-sure bound.}
Since $X_k = Y_k - \E[Y_k]$ with $Y_k = P_{e_k}/(q\,p_{e_k})$, and
$\vone^\top P_e \vone = 2v_{1,u}v_{1,v}$ for edge $e = (u,v)$:
\[
|Z_k| \leq |\vone^\top Y_k \vone| + |\vone^\top \E[Y_k] \vone|
= \frac{2|v_{1,u_k}\,v_{1,v_k}|}{q\,p_{e_k}} + \frac{\lamG}{q}.
\]
Using $\|\vone\|_\infty \leq K/\sqrt{n}$, $1/p_{e_k} \leq (n-1)\Delta$
(since $R_{e_k} \geq 1/\Delta$ gives $p_{e_k} \geq 1/((n-1)\Delta)$), and
$\lamG \leq \Delta$:
\[
|Z_k| \leq \frac{2K^2(n-1)\Delta}{nq} + \frac{\Delta}{q}
\leq \frac{(2K^2+1)\Delta}{q} =: b.
\]

\medskip\noindent\textbf{Variance.}
\begin{align*}
\Var\!\left(\sum_k Z_k\right) &= q \cdot \E[Z_1^2] \leq q \cdot \E[(\vone^\top Y_1 \vone)^2].
\end{align*}
Expanding:
\[
\E[(\vone^\top Y_1 \vone)^2]
= \frac{1}{q^2} \sum_e \frac{(2v_{1,u}v_{1,v})^2}{p_e}
= \frac{4(n-1)}{q^2} \sum_{e=(u,v)} \frac{v_{1,u}^2\,v_{1,v}^2}{R_e}.
\]
Using $R_e \geq 1/\Delta$ and the estimate
$v_{1,u}^2 v_{1,v}^2 \leq \|\vone\|_\infty^2 \cdot (v_{1,u}^2 + v_{1,v}^2)/2
\leq (K^2/(2n))(v_{1,u}^2 + v_{1,v}^2)$:
\begin{align*}
\sum_e \frac{v_{1,u}^2 v_{1,v}^2}{R_e}
&\leq \frac{K^2\Delta}{2n}\sum_e (v_{1,u}^2 + v_{1,v}^2)
= \frac{K^2\Delta}{2n}\sum_i d_i\,v_{1,i}^2
= \frac{K^2\Delta\,\alpha(G)}{2n}
\leq \frac{K^2\Delta^2}{2n}.
\end{align*}
Therefore:
\[
\sigma^2 := \Var\!\left(\sum_k Z_k\right)
\leq q \cdot \frac{4(n-1)}{q^2} \cdot \frac{K^2\Delta^2}{2n}
= \frac{2K^2(n-1)\Delta^2}{qn}
\leq \frac{2K^2\eps^2\Delta^2}{cn\log n} =: U.
\]

\medskip\noindent\textbf{Bernstein application.}
By the scalar Bernstein inequality:
\[
\Pr\!\left[\left|\sum_k Z_k\right| \geq t\right]
\leq 2\exp\!\left(-\frac{t^2/2}{\sigma^2 + bt/3}\right).
\]
Set $t = 4K\eps\Delta\sqrt{\log n}/\sqrt{cn}$. We verify the exponent.

\medskip\noindent\emph{Main term.}
Since the numerator $t^2 = 16K^2\eps^2\Delta^2\log n/(cn)$ and
$2U = 4K^2\eps^2\Delta^2/(cn\log n)$:
\[
\frac{t^2}{2U}
= \frac{16K^2\eps^2\Delta^2\log n/(cn)}{4K^2\eps^2\Delta^2/(cn\log n)}
= 4(\log n)^2.
\]

\medskip\noindent\emph{Correction term.}
\[
\frac{bt}{3}
= \frac{(2K^2+1)\Delta}{3q} \cdot \frac{4K\eps\Delta\sqrt{\log n}}{\sqrt{cn}}.
\]
Substituting $q \geq cn\log n/\eps^2$:
\[
\frac{bt}{3}
\leq \frac{4K(2K^2+1)\eps^3\Delta^2}{3c^{3/2}n^{3/2}(\log n)^{1/2}}.
\]
We need $bt/3 \leq U$ (the proved upper bound on $\sigma^2$), i.e.,
\[
\frac{4K(2K^2+1)\eps^3\Delta^2}{3c^{3/2}n^{3/2}(\log n)^{1/2}}
\leq \frac{2K^2\eps^2\Delta^2}{cn\log n}.
\]
Canceling $\eps^2\Delta^2$ from both sides and rearranging:
\[
\sqrt{\frac{n}{\log n}}
\;\geq\; \frac{2(2K^2+1)\,\eps}{3K\sqrt{c}}.
\]
For $K = O(1)$, $\eps \leq 1/2$, $c \geq 4$, this holds whenever
$n \geq n_0(K, c)$ for an explicit threshold depending only on $K$ and $c$.
When this holds, $bt/3 \leq U$, so $\sigma^2 + bt/3 \leq U + U = 2U$.
Therefore:
\[
\frac{t^2/2}{\sigma^2 + bt/3}
\geq \frac{t^2/2}{2U}
= \frac{t^2}{4U}
= 2(\log n)^2
\geq 2\log n,
\]
giving failure probability at most $2e^{-2\log n} = 2n^{-2} \leq 2n^{-3/2}$.
\end{proof}

\begin{remark}[Role of the correction term]\label{rem:correction}
The Bernstein correction $bt/3$ is of order
$O(K^3\eps^3\Delta^2/(c^{3/2}n^{3/2}\sqrt{\log n}))$,
which is dominated by the variance upper bound $U = O(K^2\eps^2\Delta^2/(cn\log n))$
whenever $\sqrt{n/\log n} \geq C(2K^2+1)\eps/(K\sqrt{c})$
for an explicit constant $C = 2/3$. This holds for all $n \geq n_0(K, c)$
where $n_0$ depends only on $K$ and $c$ (not on the graph).
For example, with $K = 1$, $c = 4$, $\eps = 1/2$: the condition becomes
$\sqrt{n/\log n} \geq 1/2$, which holds for $n \geq 2$.
\end{remark}

\subsection{Remainder Control via the Operator Norm}

\begin{lemma}[Resolvent remainder under delocalization]\label{lem:remainder}
Let $G$ be $K$-delocalized with spectral gap $\delta_{\mathrm{gap}}$. Under the SS
sampling scheme with $q = \lceil cn\log n/\eps^2 \rceil$, $c \geq 4$,
$\eps \in (0, 1/2]$, suppose that $8\eps\Delta/\sqrt{c} \leq \delta_{\mathrm{gap}}$.
Then with probability at least $1 - 2n^{-1/2}$:
\[
0 \;\leq\; (\lamH - \lamG) - \vone^\top \Phi\,\vone
\;\leq\; \frac{8\eps^2\Delta^2}{c\,\delta_{\mathrm{gap}}}.
\]
\end{lemma}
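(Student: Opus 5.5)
The plan is to work on the single high-probability event supplied by Theorem~\ref{thm:bernstein} and then apply the deterministic resolvent estimate of Theorem~\ref{thm:resolvent}. I do not expect to need delocalization at all for this step. The hypothesis that $G$ is $K$-delocalized is only carried along so the lemma can later be combined with Lemma~\ref{lem:first-order}.

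\emph{Step 1 (good event).} Let $\mathcal{E}$ be the event $\|\Phi\|_2 \leq 2\eps\Delta/\sqrt{c}$. Under the stated hypotheses ($q = \lceil cn\log n/\eps^2\rceil$, $c \geq 4$, $\eps \in (0,1/2]$), Theorem~\ref{thm:bernstein} gives $\Pr[\mathcal{E}] \geq 1 - 2n^{-1/2}$. This matches the claimed probability, so everything below is a deterministic argument on $\mathcal{E}$.

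\emph{Step 2 (gap condition).} On $\mathcal{E}$, the assumption $8\eps\Delta/\sqrt{c} \leq \delta_{\mathrm{gap}}$ gives
\[
\|\Phi\|_2 \leq \frac{2\eps\Delta}{\sqrt{c}} \leq \frac{\delta_{\mathrm{gap}}}{4} < \frac{\delta_{\mathrm{gap}}}{2}.
\]
The last inequality is strict because $\delta_{\mathrm{gap}} > 0$ for connected $G$. Hence the hypothesis of Theorem~\ref{thm:resolvent} holds, and the remainder $(\lamH - \lamG) - \vone^\top\Phi\vone$ is nonnegative, which is the lower bound in the lemma. For the upper bound, the resolvent denominator satisfies
\[
\delta_{\mathrm{gap}} - 2\|\Phi\|_2 \geq \delta_{\mathrm{gap}} - \frac{\delta_{\mathrm{gap}}}{2} = \frac{\delta_{\mathrm{gap}}}{2}.
\]

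\emph{Step 3 (numerator).} I bound the numerator crudely by the operator norm:
\[
\|\Phi\vone\|^2 \leq \|\Phi\|_2^2 \leq \frac{4\eps^2\Delta^2}{c}.
\]
Dividing by $\delta_{\mathrm{gap}}/2$ gives
\[
\frac{4\eps^2\Delta^2/c}{\delta_{\mathrm{gap}}/2} = \frac{8\eps^2\Delta^2}{c\,\delta_{\mathrm{gap}}},
\]
which is the claimed bound.

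There is no real obstacle here; the only points needing care are checking that the factor $8$ in the gap hypothesis produces the strict inequality required by Theorem~\ref{thm:resolvent}, and that both bounds hold on the same event $\mathcal{E}$, so no union bound is needed. One could try to sharpen Step 3 by using Proposition~\ref{prop:variance} together with Markov's inequality in place of the operator-norm bound. However, that would worsen the failure probability, and the statement as given needs only the deterministic inequality $\|\Phi\vone\| \leq \|\Phi\|_2$, so I would not pursue it.
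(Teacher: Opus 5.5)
Your proposal is correct and matches the paper's proof step for step. Both work on the event $\|\Phi\|_2 \le 2\eps\Delta/\sqrt{c}$ from Theorem~\ref{thm:bernstein}, use the hypothesis $8\eps\Delta/\sqrt{c} \le \delta_{\mathrm{gap}}$ to satisfy the condition of Theorem~\ref{thm:resolvent} and to bound the denominator below by $\delta_{\mathrm{gap}}/2$, and bound the numerator by $\|\Phi\|_2^2$. You are also right that delocalization plays no role here; the paper's proof does not use it either.
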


\begin{proof}
By Theorem~\ref{thm:resolvent}, when $\|\Phi\|_2 < \delta_{\mathrm{gap}}/2$:
\[
0 \leq (\lamH - \lamG) - \vone^\top \Phi\,\vone
\leq \frac{\|\Phi\,\vone\|^2}{\delta_{\mathrm{gap}} - 2\|\Phi\|_2}.
\]
By Theorem~\ref{thm:bernstein}, $\|\Phi\|_2 \leq 2\eps\Delta/\sqrt{c}$
with probability at least $1 - 2n^{-1/2}$. The hypothesis
$8\eps\Delta/\sqrt{c} \leq \delta_{\mathrm{gap}}$ ensures
$2\|\Phi\|_2 \leq 4\eps\Delta/\sqrt{c} \leq \delta_{\mathrm{gap}}/2$,
so the resolvent condition $\|\Phi\|_2 < \delta_{\mathrm{gap}}/2$ is satisfied.

For the numerator: $\|\Phi\,\vone\|^2 \leq \|\Phi\|_2^2 \leq 4\eps^2\Delta^2/c$.

For the denominator:
$\delta_{\mathrm{gap}} - 2\|\Phi\|_2
\geq \delta_{\mathrm{gap}} - 4\eps\Delta/\sqrt{c}
\geq \delta_{\mathrm{gap}} - \delta_{\mathrm{gap}}/2
= \delta_{\mathrm{gap}}/2$,
where the second inequality uses the hypothesis. Therefore:
\[
\frac{4\eps^2\Delta^2/c}{\delta_{\mathrm{gap}} - 4\eps\Delta/\sqrt{c}}
\leq \frac{4\eps^2\Delta^2/c}{\delta_{\mathrm{gap}}/2}
= \frac{8\eps^2\Delta^2}{c\,\delta_{\mathrm{gap}}}. \qedhere
\]
\end{proof}

\subsection{Main Theorem}

\begin{theorem}[Delocalized distortion bound]\label{thm:delocalized}
Let $G$ be a connected, $K$-delocalized graph with spectral gap
$\delta_{\mathrm{gap}} = \lamG - \lambda_2$. Let $H$ be the SS sparsifier with
$q = \lceil cn\log n/\eps^2 \rceil$ samples, $c \geq 4$, $\eps \in (0, 1/2]$.
Suppose that $8\eps\Delta/\sqrt{c} \leq \delta_{\mathrm{gap}}$ and $n \geq n_0(K,c)$
(the threshold from Lemma~\ref{lem:first-order}). Then with probability at
least $1 - 2n^{-1/2} - 2n^{-3/2} \geq 1 - 4n^{-1/2}$:
\[
|\lamH - \lamG| \;\leq\;
\frac{4K\,\eps\,\Delta\,\sqrt{\log n}}{\sqrt{cn}}
\;+\; \frac{8\eps^2\Delta^2}{c\,\delta_{\mathrm{gap}}}.
\]
\end{theorem}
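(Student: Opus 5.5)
The plan is to combine Lemma~\ref{lem:first-order} and Lemma~\ref{lem:remainder} through the decomposition
\[
\lamH - \lamG \;=\; \vone^\top \Phi\,\vone \;+\; \Bigl[(\lamH - \lamG) - \vone^\top \Phi\,\vone\Bigr],
\]
which holds identically, and then apply a union bound. The first term is the first-order scalar perturbation. The bracketed term is the resolvent remainder, which Theorem~\ref{thm:resolvent} shows is nonnegative once the operator-norm condition holds.

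\textbf{Step 1: define two events.} Let $\mathcal{E}_1$ be the event of Lemma~\ref{lem:first-order}. Its hypotheses are $K$-delocalization, $c\ge 4$, $\eps\le 1/2$ and $n\ge n_0(K,c)$, all of which are assumed in the theorem. On $\mathcal{E}_1$,
\[
|\vone^\top\Phi\vone| \le \frac{4K\eps\Delta\sqrt{\log n}}{\sqrt{cn}},
\]
and $\Pr[\mathcal{E}_1^c]\le 2n^{-3/2}$. Let $\mathcal{E}_2$ be the event of Theorem~\ref{thm:bernstein}, namely $\|\Phi\|_2\le 2\eps\Delta/\sqrt c$, which fails with probability at most $2n^{-1/2}$. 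The gap hypothesis $8\eps\Delta/\sqrt c\le\delta_{\mathrm{gap}}$ is exactly what Lemma~\ref{lem:remainder} needs. On $\mathcal{E}_2$ it therefore gives
\[
0\le (\lamH-\lamG)-\vone^\top\Phi\vone \le \frac{8\eps^2\Delta^2}{c\,\delta_{\mathrm{gap}}}.
\]

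\textbf{Step 2: combine on $\mathcal{E}_1\cap\mathcal{E}_2$.} The triangle inequality gives
\[
|\lamH-\lamG| \le |\vone^\top\Phi\vone| + \bigl|(\lamH-\lamG)-\vone^\top\Phi\vone\bigr|,
\]
and the two terms are controlled by $\mathcal{E}_1$ and $\mathcal{E}_2$ respectively, which yields the claimed sum. By the union bound, the failure probability is at most $2n^{-1/2}+2n^{-3/2}$. This is at most $4n^{-1/2}$ because $n^{-3/2}\le n^{-1/2}$ for $n\ge1$.

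\textbf{Main subtlety.} There is little real obstacle here; the only point needing care is the failure event of Lemma~\ref{lem:remainder}. Its stated probability bound comes entirely from the operator-norm event of Theorem~\ref{thm:bernstein}, so the remainder estimate holds deterministically on $\mathcal{E}_2$. This means the two failure probabilities add directly and no independence between the two events is required.
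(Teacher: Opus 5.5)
Your proposal is correct and matches the paper's proof: both write $\lamH-\lamG$ as the first-order term $\vone^\top\Phi\vone$ plus the nonnegative resolvent remainder from Theorem~\ref{thm:resolvent}, then bound the two pieces by Lemma~\ref{lem:first-order} and Lemma~\ref{lem:remainder}. Both then union-bound the failure probabilities $2n^{-3/2}$ and $2n^{-1/2}$. Your remark that the remainder bound holds deterministically on the operator-norm event of Theorem~\ref{thm:bernstein} is exactly what the paper's ``by a union bound'' implicitly uses.
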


\begin{proof}
By Theorem~\ref{thm:resolvent}:
\[
\lamH - \lamG = \vone^\top \Phi\,\vone + R,
\qquad 0 \leq R \leq \frac{\|\Phi\vone\|^2}{\delta_{\mathrm{gap}} - 2\|\Phi\|_2}.
\]
Therefore
$|\lamH - \lamG| \leq |\vone^\top \Phi\vone| + R$.
Applying Lemma~\ref{lem:first-order} and Lemma~\ref{lem:remainder},
by a union bound the stated inequality holds with probability
at least $1 - 2n^{-3/2} - 2n^{-1/2} \geq 1 - 4n^{-1/2}$.
\end{proof}

\subsection{Corollaries for Specific Graph Families}

The following corollaries instantiate Theorem~\ref{thm:delocalized} for specific
random graph models by importing standard spectral properties from the random
matrix theory literature. In each case, we explicitly identify which facts are
proved in this paper (the sparsification distortion bounds) and which are drawn
from prior work (spectral gap concentration, eigenvector delocalization, and degree
concentration for the underlying random graph model).

\begin{corollary}[Erd\H{o}s--R\'{e}nyi graphs]\label{cor:er}
Let $G \sim G(n, p)$ with $np/\log n \to \infty$ (i.e., $p \gg (\log n)/n$).
Let $H$ be the SS sparsifier with $q = \lceil cn\log n/\eps^2 \rceil$, $c \geq 4$,
$\eps \in (0, 1/2]$. Then with probability $1 - o(1)$ over both $G$ and
the SS sampling:
\[
|\lamH - \lamG| \;\leq\;
O\!\left(\eps\sqrt{np\log n}\right)
\;+\; O\!\left(\eps^2\,np\right).
\]
In particular, if additionally $\eps = O\!\left(\sqrt{\log n / (np)}\right)$,
then the second term is absorbed and
$|\lamH - \lamG| = O\!\left(\eps\sqrt{np\log n}\right)
= O\!\left(\eps\sqrt{\Delta\log n}\right)$.
\end{corollary}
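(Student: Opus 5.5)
The plan is to instantiate Theorem~\ref{thm:delocalized} by checking its three structural inputs for $G\sim G(n,p)$ on a high-probability event $\mathcal{E}$ over the graph, and then to plug in. On $\mathcal{E}$ I want: (i) degree concentration, $\Delta = (1+o(1))np$, which follows from Chernoff plus a union bound over vertices because $np/\log n\to\infty$; (ii) spectral gap, $\lamG = (1+o(1))np$ and $\max(|\lambda_2|,|\lambda_n|) = O(\sqrt{np})$, from the standard Feige--Ofek / Füredi--Komlós type bounds, so $\delta_{\mathrm{gap}} = (1-o(1))np = \Omega(\Delta)$; (iii) delocalization with $K=O(1)$, which the remark after Definition~\ref{def:deloc} attributes to \citet{abbe2020entrywise} in this regime. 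Connectivity also holds on $\mathcal{E}$. Each ingredient is imported from prior work with probability $1-o(1)$, so $\Pr[\mathcal{E}]=1-o(1)$.

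Next I condition on $G\in\mathcal{E}$ and run the SS sampling. The gap hypothesis $8\eps\Delta/\sqrt{c}\le\delta_{\mathrm{gap}}$ is the step that needs care, since it does not hold for all $c\ge4$, $\eps\le1/2$. With $\Delta\approx\delta_{\mathrm{gap}}\approx np$, it reduces to $8\eps/\sqrt{c}\le 1-o(1)$. If $c$ is not large enough for this, I fall back on the deterministic/Bernstein bound. Corollary~\ref{cor:weyl} gives $|\lamH-\lamG|\le 2\eps\Delta/\sqrt{c}=O(\eps\, np)$. Since then $\eps\ge \sqrt{c}/8=\Omega(1)$, this is $O(\eps^2 np)$, so it is absorbed into the second term. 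The corollary's big-$O$ hides constants depending on $c$, so this case split suffices. In the main case, since $K=O(1)$ and $n\to\infty$, the condition $n\ge n_0(K,c)$ holds eventually. Theorem~\ref{thm:delocalized} then gives, with conditional probability $\ge1-4n^{-1/2}$,
\[
|\lamH-\lamG|\le \frac{4K\eps\Delta\sqrt{\log n}}{\sqrt{cn}}+\frac{8\eps^2\Delta^2}{c\,\delta_{\mathrm{gap}}}.
\]

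Then I simplify the two terms. The second is $O(\eps^2 np)$, using $\Delta^2/\delta_{\mathrm{gap}} = O(np)$. The first is $O(\eps\, np\sqrt{\log n}/\sqrt n) = O(\eps\sqrt{p}\sqrt{np\log n})$, which is at most $O(\eps\sqrt{np\log n})$ because $p\le 1$. Averaging over the graph, the total failure probability is $\Pr[\mathcal{E}^c] + 4n^{-1/2} = o(1)$. For the ``in particular'' clause, $\eps = O(\sqrt{\log n/(np)})$ gives $\eps^2 np = \eps\cdot\eps np = O(\eps\sqrt{np\log n})$, so the second term is absorbed. Finally, $\Delta=\Theta(np)$ converts the result to the form $O(\eps\sqrt{\Delta\log n})$.

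The main obstacle is ingredient (iii). I need to make sure the cited entrywise result really gives $\|\vone\|_\infty = O(1/\sqrt n)$ uniformly in the whole regime $np/\log n\to\infty$, including near the threshold. If only $K=n^{o(1)}$ is available, as from \citet{he2019eigenvectors}, the first term picks up an $n^{o(1)}$ factor. In that case I would instead argue directly: $v_1 = A\mathbf{1}/\|A\mathbf{1}\|$ up to an $O(1/\sqrt{np})$ correction, combined with degree concentration and the identity $v_{1,i} = (Av_1)_i/\lamG$, which bounds $|v_{1,i}|\le \sqrt{d_i}\,\|v_1\|_2$-type averages by $O(1/\sqrt n)$.
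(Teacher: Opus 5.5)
Your proposal is correct and follows the paper's proof. Both arguments run as follows. First, put $G$ on a $1-o(1)$ event on which four things hold: degree concentration, $\lamG=(1+o(1))np$, $\delta_{\mathrm{gap}}=(1-o(1))np$ via spectral-norm concentration plus Weyl, and $K=O(1)$ from the entrywise eigenvector result of Abbe et al. Then apply Theorem~\ref{thm:delocalized} conditionally, simplify $\eps\,np\sqrt{\log n}/\sqrt n\le\eps\sqrt{np\log n}$ using $p\le 1$, and compose the failure probabilities. One detail in the paper's version: it applies Abbe et al.'s rank-one corollary to $\AG+pI$. That matrix has expectation exactly $p\mathbf{1}\mathbf{1}^\top$, whereas the loopless $\E[\AG]=p(\mathbf{1}\mathbf{1}^\top-I)$ is full rank.

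\textbf{Your case split.} The paper checks $8\eps\Delta/\sqrt c\le\delta_{\mathrm{gap}}$ only when $8\eps/\sqrt c<1$ strictly (e.g.\ $\eps<1/4$ for $c=4$). It never treats the rest of the stated range $\eps\le 1/2$, $c\ge 4$, so your fallback to Corollary~\ref{cor:weyl} closes a real hole. It can be made uniform in $\eps$, and you can drop the imprecise ``$\eps\ge\sqrt c/8$'', by splitting on the realized graph. If the hypothesis fails, then $1<8\eps\Delta/(\sqrt c\,\delta_{\mathrm{gap}})$, so $2\eps\Delta/\sqrt c<16\eps^2\Delta^2/(c\,\delta_{\mathrm{gap}})$, which is $O(\eps^2 np)$ on your event $\mathcal{E}$.

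\textbf{Your backup delocalization argument.} As sketched, it does not give $K=O(1)$ in the whole regime. Cauchy--Schwarz in $v_{1,i}=(\AG\vone)_i/\lamG$ gives only $|v_{1,i}|\le\sqrt{d_i}/\lamG\approx 1/\sqrt{np}$. Writing $\vone=\beta\mathbf{1}/\sqrt n+w$ with $\|w\|_2=O(1/\sqrt{np})$ (Davis--Kahan) improves this to $|v_{1,i}|\le(1+o(1))/\sqrt n+O(1/(np))$. That is $O(1/\sqrt n)$ only when $np\gtrsim\sqrt n$. For $\log n\ll np\ll\sqrt n$ you genuinely need the leave-one-out/entrywise machinery, so the citation is not optional there.
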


\begin{proof}
We combine Theorem~\ref{thm:delocalized} (proved above) with standard spectral
properties of $G(n,p)$ drawn from the random matrix theory literature.
The following facts hold simultaneously with probability $1 - o(1)$ over the draw
of $G \sim G(n,p)$ in the regime $np/\log n \to \infty$:

\begin{enumerate}[label=(\roman*),nosep]
\item \emph{Maximum degree concentration} (by Chernoff bounds and the union bound over
$n$ vertices):
$\Delta = (1+o(1))np$. This requires $np \gg \log n$ so that the fluctuation
$O(\sqrt{np\log n})$ is $o(np)$.

\item \emph{Leading eigenvalue}: $\lamG = (1+o(1))np$.
This follows from (i) and the classical bounds $\bar{d} \leq \lamG \leq \Delta$
for connected graphs, where the average degree $\bar{d} = (1+o(1))np$ by the
law of large numbers.

\item \emph{Spectral gap}: $\delta_{\mathrm{gap}} = np(1 - o(1))$.
For $p$ bounded away from $0$ and $1$, the semicircle law gives
$\lambda_2(A_G) = (2+o(1))\sqrt{np(1-p)}$~\citep{erdos2013spectral},
so $\delta_{\mathrm{gap}} = np(1-o(1))$.
For the sparser regime $np/\log n \to \infty$ with $p \to 0$, we use the weaker
but sufficient bound from~\citet{le2017concentration}:
$\|A_G - \E[A_G]\|_2 = O(\sqrt{np})$ w.h.p. Since the rank-one expectation matrix
$\E[A_G] = p(\mathbf{1}\mathbf{1}^\top - I)$ has top eigenvalue
$p(n-1) = (1-o(1))np$ and all other eigenvalues equal to $-p$,
Weyl's inequality gives $\lambda_2(A_G) \leq -p + O(\sqrt{np}) = O(\sqrt{np})$.
Therefore $\delta_{\mathrm{gap}} = \lamG - \lambda_2 \geq np - O(\sqrt{np})
= np(1 - o(1))$, which is all that is needed for
Theorem~\ref{thm:delocalized}.

\item \emph{Perron eigenvector delocalization}: $K = O(1)$.
General eigenvector delocalization results~\citep{he2019eigenvectors} give
$\|\vone\|_\infty \leq n^{-1/2+\eps}$ for any $\eps > 0$ (i.e., $K = n^{o(1)}$)
down to the critical scale $np = O(\log n)$.
For the \emph{Perron eigenvector} specifically, the stronger bound $K = O(1)$
follows from the entrywise eigenvector analysis of~\citet[Corollary~2.1]{abbe2020entrywise}.
Because the loopless model gives $\E[A_G] = p(\mathbf{1}\mathbf{1}^\top - I)$,
which is \emph{full} rank, we apply their Corollary~2.1 to the diagonally shifted
matrix $M := A_G + pI$. Its expectation $\E[M] = p\,\mathbf{1}\mathbf{1}^\top$ has
exact rank one and the same eigenvectors as $A_G$, so the leading eigenvector $\vone$
is the rank-one ($r=1$) instance of their Corollary~2.1, with eigengap
$\Delta^* = \Theta(np)$, incoherent population eigenvector
$\|U^*\|_{2\to\infty} = 1/\sqrt{n}$, and independent centered-Bernoulli noise
$W = A_G - \E[A_G]$ satisfying $\|W\|_2 = O(\sqrt{np})$~\citep{le2017concentration}.
In the regime $np/\log n \to \infty$ the noise-to-signal ratio
$\|W\|_2/\Delta^* = O(1/\sqrt{np}) \to 0$, so their entrywise approximation gives
$v_{1,i} = (1+o(1))/\sqrt{n}$ uniformly, hence $K = O(1)$.

\end{enumerate}

\noindent\emph{Application of Theorem~\ref{thm:delocalized} (proved in this paper).}
Properties (i)--(iv) hold simultaneously with probability $1 - o(1)$ over the
draw of~$G$.
Conditionally on this event, Theorem~\ref{thm:delocalized} gives the distortion bound
with probability at least $1 - 4n^{-1/2}$ over the SS sampling.
Composing: the overall failure probability is $o(1) + 4n^{-1/2} = o(1)$.

Given (i)--(iv), the hypothesis $8\eps\Delta/\sqrt{c} \leq \delta_{\mathrm{gap}}$ is
satisfied for all sufficiently large $n$ whenever $8\eps/\sqrt{c} < 1$ (strictly);
in particular, it holds for $\eps < 1/4$ and $c = 4$, or for $\eps \leq 1/4$ and $c > 4$.
Applying Theorem~\ref{thm:delocalized} with $\Delta = (1+o(1))np$, $K = O(1)$,
$\delta_{\mathrm{gap}} = (1-o(1))np$:
\[
|\lamH - \lamG|
\leq O\!\left(\frac{\eps\,np\sqrt{\log n}}{\sqrt{n}}\right) + O(\eps^2\,np)
= O(\eps\sqrt{np\log n}) + O(\eps^2\,np).
\]
The simplification of the first term uses $\Delta = \Theta(np) = O(n)$ (since $p \leq 1$),
so $\eps\,np\,\sqrt{\log n}/\sqrt{n}
= \eps\sqrt{(np)^2\log n / n}
= \eps\sqrt{np \cdot (np/n) \cdot \log n}
\leq \eps\sqrt{np\log n}
= \eps\sqrt{\Delta\log n}$.
The second term is $O(\eps^2\,np)$. For $p$ bounded away from zero (dense regime),
$np = \Theta(n)$ and the second term is $O(\eps^2 n)$. This is absorbed by the first
term $O(\eps\sqrt{n\log n})$ only when $\eps = O(\sqrt{\log n/n})$.
For general $p$, the absorption condition is $\eps^2\,np \leq C\eps\sqrt{np\log n}$,
i.e., $\eps \leq C'\sqrt{\log n/(np)}$.
\end{proof}

\begin{remark}[Regime of validity]
The hypothesis $np/\log n \to \infty$ is stronger than connectivity. For
constant $\eps$ in the dense regime ($p = \Theta(1)$), the distortion is
$O(\eps^2 n)$ (the resolvent remainder dominates); the simplified bound
$O(\eps\sqrt{n\log n})$ holds only when $\eps = O(\sqrt{\log n/n})$.
\end{remark}

\begin{corollary}[$d$-regular graphs]\label{cor:regular-deloc}
Let $G$ be a connected $d$-regular graph on $n$ vertices with
$\delta_{\mathrm{gap}} = d - \lambda_2$. Let $H$ be the SS sparsifier with
$q = \lceil cn\log n/\eps^2\rceil$. If $8\eps d/\sqrt{c} \leq \delta_{\mathrm{gap}}$,
then with probability at least $1 - 4n^{-1/2}$:
\[
|\lamH - \lamG| \;\leq\;
\frac{4\eps\, d\,\sqrt{\log n}}{\sqrt{cn}}
+ \frac{8\eps^2 d^2}{c\,\delta_{\mathrm{gap}}}.
\]
\end{corollary}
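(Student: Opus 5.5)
This corollary is a direct specialization of Theorem~\ref{thm:delocalized}, so the plan is to check its hypotheses for a connected $d$-regular graph and then substitute the relevant parameters. First, since $G$ is $d$-regular, $\AG\mathbf{1} = d\mathbf{1}$. Because $G$ is connected, Perron--Frobenius makes $\mathbf{1}/\sqrt{n}$ the unit Perron eigenvector, so $\vone = \mathbf{1}/\sqrt{n}$ and $\lamG = d$. Hence $\|\vone\|_\infty = 1/\sqrt{n}$ and $G$ is $1$-delocalized, i.e.\ $K=1$. Also $\Delta = d$, and $\delta_{\mathrm{gap}} = d - \lambda_2$ agrees with the paper's definition.

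Second, I would verify the remaining hypotheses of Theorem~\ref{thm:delocalized}. The gap condition $8\eps\Delta/\sqrt{c} \leq \delta_{\mathrm{gap}}$ is exactly the assumption $8\eps d/\sqrt{c}\le\delta_{\mathrm{gap}}$. The parameter ranges $c\ge 4$ and $\eps\in(0,1/2]$ are inherited from the standing SS setup, and I would state them explicitly. Substituting $K=1$, $\Delta=d$ into the conclusion of Theorem~\ref{thm:delocalized} then gives exactly
\[
\frac{4\eps d\sqrt{\log n}}{\sqrt{cn}} + \frac{8\eps^2 d^2}{c\,\delta_{\mathrm{gap}}},
\]
with failure probability at most $4n^{-1/2}$.

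The one point needing care is the threshold $n\ge n_0(K,c)$ from Lemma~\ref{lem:first-order}, which the corollary does not assume. I would remove it by checking the condition directly for $K=1$. The lemma needs $\sqrt{n/\log n} \ge 2(2K^2+1)\eps/(3K\sqrt{c})$, which for $K=1$ reads $\sqrt{n/\log n} \ge 2\eps/\sqrt{c}$. With $\eps\le 1/2$ and $c\ge 4$, the right side is at most $1/2$, as noted in Remark~\ref{rem:correction}. Since $n/\log n \ge e > 1/4$ for all $n\ge 2$, the condition holds for every $n\ge2$, so $n_0(1,c)\le 2$ uniformly.

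For $n=1$ there is nothing to prove, since a connected graph on one vertex has no edges. I expect this threshold check to be the only nontrivial step. Everything else is substitution followed by the union bound already carried out in Theorem~\ref{thm:delocalized}.
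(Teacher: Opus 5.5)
Your proposal is correct and matches the paper's proof, which consists of substituting $K=1$ and $\Delta=d$ into Theorem~\ref{thm:delocalized}. Your explicit check that the threshold $n_0(1,c)$ of Lemma~\ref{lem:first-order} is met automatically (generalizing the example in Remark~\ref{rem:correction}) settles a point the paper's one-line proof leaves implicit. Any remaining small-$n$ subtlety is moot, since $1-4n^{-1/2}\le 0$ for $n\le 16$.
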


\begin{proof}
Apply Theorem~\ref{thm:delocalized} with $K = 1$ and $\Delta = d$.
\end{proof}

\begin{lemma}[Spectral properties of the balanced SBM]\label{lem:sbm-spectral}
Let $G \sim \mathrm{SBM}(n, k, p, r)$ be a balanced $k$-community SBM with
$k = O(1)$, $np/(k\log n) \to \infty$, and $r = \Theta(p)$.
Then with probability $1 - o(1)$:
\begin{enumerate}[label=(\roman*),nosep]
\item $\Delta = \Theta(np)$ and $\lamG = \Theta(np)$.
\item $\delta_{\mathrm{gap}} = \Theta(np) = \Theta(\Delta)$.
\item $G$ is $K$-delocalized with $K = O(\sqrt{k}) = O(1)$.
\end{enumerate}
\end{lemma}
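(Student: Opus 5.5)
We will follow the proof of Corollary~\ref{cor:er} and compare $A_G$ with its expectation. Take the block-constant matrix $M^* = \E[A_G] + \operatorname{diag}$-correction, namely $M^* = Z B Z^\top$. Here $Z \in \{0,1\}^{n\times k}$ is the community membership matrix, and $B$ is the $k\times k$ matrix with $p$ on the diagonal and $r$ off the diagonal. Then $\E[A_G] = M^* - pI$, and $W := A_G - \E[A_G]$ has independent centered Bernoulli entries above the diagonal.

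Since the communities are balanced (each of size $n/k$), $M^*$ has rank at most $k$. Its nonzero eigenvalues are $(n/k)(p + (k-1)r)$, with eigenvector $\mathbf{1}/\sqrt{n}$, and $(n/k)(p-r)$, with multiplicity $k-1$. Every eigenvector of $M^*$ is constant on blocks, so its entries are bounded by $\sqrt{k/n}$.

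\textbf{Step 1: degrees and $\lamG$, item (i).} Each vertex has expected degree $\bar d = (n/k)p + (n - n/k)r - p = \Theta(np)$, using $r = \Theta(p)$ and $k = O(1)$. Chernoff bounds plus a union bound over the $n$ vertices give $\max_i |d_i - \bar d| = O(\sqrt{\bar d \log n}) = o(np)$, because $np/(k\log n) \to \infty$. Hence $\Delta = \Theta(np)$. The sandwich $\text{avg degree} \le \lamG \le \Delta$ then gives $\lamG = \Theta(np)$.

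\textbf{Step 2: spectral gap, item (ii).} We apply the bound of~\citet{le2017concentration}, $\|W\|_2 = O(\sqrt{np})$ w.h.p., in this regime, and then use Weyl's inequality. This gives $\lambda_1 \ge (n/k)(p+(k-1)r) - p - O(\sqrt{np})$. It also gives $\lambda_2 \le \max\{(n/k)(p-r),\,0\} - p + O(\sqrt{np})$.

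The gap is therefore at least $(n/k)\bigl(p + (k-1)r - \max(p-r,0)\bigr) - O(\sqrt{np})$. In the assortative case this is at least $(n/k)\cdot kr - O(\sqrt{np}) = nr - O(\sqrt{np}) = \Theta(np)$. The disassortative case gives at least $(n/k)(p+(k-1)r) = \Theta(np)$ as well. Combined with $\delta_{\mathrm{gap}} \le \lamG \le \Delta$, this yields $\delta_{\mathrm{gap}} = \Theta(np) = \Theta(\Delta)$.

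A subtle point is that the top eigenvalue of $M^*$ must be separated from the rest by $\Theta(np)$. This uses $r = \Theta(p)$ with a constant strictly bounded below. We read the hypothesis as requiring $r \ge c_0 p$ for some fixed $c_0 > 0$.

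\textbf{Step 3: delocalization, item (iii).} This is the main obstacle. Following Corollary~\ref{cor:er}, we apply the entrywise perturbation theory of~\citet[Corollary~2.1]{abbe2020entrywise} to $A_G + pI$, whose expectation $M^*$ has low rank. We use only the rank-one spectral projector onto the top eigenvector $u^* = \mathbf{1}/\sqrt{n}$, which is isolated by the $\Theta(np)$ eigengap from Step 2. This projector has incoherence $\|u^*\|_\infty = 1/\sqrt{n}$.

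The noise-to-gap ratio is $\|W\|_2/\Theta(np) = O(1/\sqrt{np}) \to 0$. The row-concentration condition of that result is supplied by Bernstein bounds, which hold in the regime $np \gg \log n$.

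The conclusion is $\|\vone - u^*\|_\infty = o(1/\sqrt{n})$, up to sign, which gives $K = O(1)$. We state the weaker $K = O(\sqrt{k})$ because it is what the general rank-$k$ version of the result gives directly. That version bounds $\|U\|_{2\to\infty} \lesssim \|U^*\|_{2\to\infty} = \sqrt{k/n}$ for the full top-$k$ eigenspace, and $\vone$ is a unit vector lying in that eigenspace.

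The most delicate part of the whole argument is verifying the hypotheses of the cited result, namely the eigengap, the incoherence, and the row-wise concentration. A final union bound over Steps 1–3 gives probability $1 - o(1)$.
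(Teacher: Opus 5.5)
Your proposal is correct and follows essentially the paper's route. It uses Chernoff plus a union bound for the degrees, and the Le--Levina--Vershynin bound $\|W\|_2 = O(\sqrt{np})$ with Weyl's inequality to transfer the population gap $nr = \Theta(np)$. For delocalization it applies Abbe et al.'s Corollary~2.1 to the leading eigenvector of $A_G + pI$, whose expectation has exact rank $k$.

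The differences are minor. You obtain $\lamG$ from the average-degree sandwich instead of Weyl, and you also cover $r > p$, which the paper's formula for $\bar{\lambda}_2$ tacitly excludes. You should drop the aside about the rank-$k$ version: it would need $(n/k)|p-r| \gg \sqrt{np}$, which $r = \Theta(p)$ does not guarantee, and the rank-one argument you actually use already gives $K = O(1)$.
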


\begin{proof}
\emph{Population quantities.}
The population adjacency matrix $\bar{A} = \E[A_G]$ for the loopless SBM has zero
diagonal, so $\bar{A} = (p-r)\,\mathrm{blkdiag}(J_{n/k},\ldots,J_{n/k}) + rJ - pI$,
where $J$ is the all-ones matrix and $J_{n/k}$ are block all-ones matrices.
The $-pI$ term makes $\bar{A}$ full rank (all eigenvalues are shifted by $-p$
relative to the rank-$k$ matrix $\bar{A} + pI$). However, the spectral gap
is unaffected by diagonal shifts: the top two eigenvalues of $\bar{A}$ are
$\bar{\lambda}_1 = (n/k)(p + (k-1)r) - p$ and
$\bar{\lambda}_2 = (n/k)(p - r) - p$ (with multiplicity $k-1$),
giving population spectral gap
$\bar{\delta}_{\mathrm{gap}} = \bar{\lambda}_1 - \bar{\lambda}_2 = nr = \Theta(np)$.

\emph{Transfer to actual graph.}
Under $np/(k\log n) \to \infty$ and $r = \Theta(p)$, the spectral norm of the
centered matrix satisfies $\|A_G - \bar{A}\|_2 = O(\sqrt{np})$ with high
probability~\citep{le2017concentration} (the hypothesis $np/(k\log n) \to \infty$
with $k = O(1)$ implies $np/\log n \to \infty$, which is stronger than the
$np \geq C\log n$ threshold needed for spectral-norm concentration).
Since the perturbation $O(\sqrt{np})$ is $o(np) = o(\bar{\delta}_{\mathrm{gap}})$,
Weyl's inequality gives
$\delta_{\mathrm{gap}} = (1-o(1))\bar{\delta}_{\mathrm{gap}} = \Theta(np) = \Theta(\Delta)$.
Similarly, $\lamG = (1+o(1))\bar{\lambda}_1 = \Theta(np)$ and
$\Delta = \Theta(np)$ by degree concentration (Chernoff bounds and union bound).
This establishes (i) and (ii).

\emph{Eigenvector delocalization (iii).}
Property (iii) requires an \emph{entrywise} ($\ell^\infty$) eigenvector bound, not the
$\ell^2$ subspace closeness supplied by Davis--Kahan. We obtain it from the entrywise
eigenvector analysis of~\citet[Corollary~2.1]{abbe2020entrywise}, applied to the
\emph{leading} eigenvector of the diagonally shifted matrix $M := A_G + pI$. The shift
is needed because $\E[A_G] = \bar{A}$ is full rank for the loopless model, whereas
$\bar{M} = \bar{A} + pI = (p-r)\,\mathrm{blkdiag}(J_{n/k},\ldots,J_{n/k}) + rJ$ has exact
rank $k = O(1)$, with population eigenvectors piecewise constant on the $k$ balanced
communities (entries $\Theta(1/\sqrt{n})$, hence incoherent), Perron eigengap
$\Delta^* = nr = \Theta(np)$, and independent bounded centered-Bernoulli noise
$W = A_G - \bar{A}$ with $\|W\|_2 = O(\sqrt{np})$~\citep{le2017concentration}. In the
regime $np/(k\log n) \to \infty$ with $k = O(1)$ and $r = \Theta(p)$ the noise-to-signal
ratio $\|W\|_2/\Delta^* = O(1/\sqrt{np}) \to 0$, so Corollary~2.1 yields
$\|\vone\|_\infty = O(\sqrt{k/n})$, hence $K = O(\sqrt{k}) = O(1)$. We apply their general
low-rank corollary to the leading eigenvector; we do not invoke their Section~3.2
stochastic-block-model analysis, which concerns the \emph{second} eigenvector.
\end{proof}

\begin{corollary}[Balanced stochastic block model]\label{cor:sbm}
Let $G \sim \mathrm{SBM}(n, k, p, r)$ be a balanced $k$-community SBM with
$n/k$ vertices per community, intra-community edge probability $p$, and
inter-community probability $r < p$, where $k = O(1)$, $np/(k\log n) \to \infty$,
and $r = \Theta(p)$.
Then with probability $1 - o(1)$ (over both $G$ and the SS sampling):
\[
|\lamH - \lamG| \;\leq\;
O\!\left(\eps\sqrt{\Delta\log n}\right)
\;+\; O\!\left(\eps^2\,\Delta\right),
\]
where $\Delta = \Theta(np)$ is the typical maximum degree.
The simplified form $|\lamH - \lamG| = O(\eps\sqrt{\Delta\log n})$ holds when
$\eps = O(\sqrt{\log n/\Delta})$.
\end{corollary}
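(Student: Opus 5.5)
The plan mirrors the proof of Corollary~\ref{cor:er}: condition on a high-probability event for the draw of $G$ on which the hypotheses of Theorem~\ref{thm:delocalized} hold, apply that theorem conditionally on $G$, and compose the failure probabilities. The graph-side input comes entirely from Lemma~\ref{lem:sbm-spectral}. With probability $1-o(1)$ it gives $\Delta=\Theta(np)$, $\lamG=\Theta(np)$, $\delta_{\mathrm{gap}}=\Theta(np)=\Theta(\Delta)$, and $K$-delocalization with $K=O(\sqrt{k})=O(1)$. Connectivity of $G$, which Theorem~\ref{thm:delocalized} also requires, follows w.h.p.\ because $np/\log n\to\infty$ and $r=\Theta(p)$ make every degree $\Theta(np)\gg\log n$. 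Alternatively it follows from $\delta_{\mathrm{gap}}>0$ together with positivity of the Perron vector supplied by the entrywise bound.

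Next I check the two hypotheses of Theorem~\ref{thm:delocalized}. First, $n\ge n_0(K,c)$ holds for large $n$, since $K$ is bounded by a constant depending only on $k$. Second, $8\eps\Delta/\sqrt c\le\delta_{\mathrm{gap}}$ must hold. Unlike the Erd\H{o}s--R\'enyi case, here $\delta_{\mathrm{gap}}/\Delta$ is only a constant $\kappa>0$, roughly $r/(p/k+(1-1/k)r)$, not $1-o(1)$. So I will need $\eps\le\kappa\sqrt c/8$, either by choosing $c$ large enough as a function of the fixed constants $k$ and $r/p$, or by restricting $\eps$ accordingly.

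This is the step I expect to be the main obstacle. The corollary as stated allows any $\eps\in(0,1/2]$ and $c\ge 4$, so I would make the implicit constant dependence explicit. For $\eps$ above the threshold, I would fall back on Corollary~\ref{cor:weyl}. It gives $|\lamH-\lamG|\le 2\eps\Delta/\sqrt c=O(\eps\Delta)$, and since $\eps$ is then bounded below by a constant, this is $O(\eps^2\Delta)$, so the stated bound still holds in that regime.

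On the good event, Theorem~\ref{thm:delocalized} gives, with conditional probability at least $1-4n^{-1/2}$,
\[
|\lamH-\lamG|\le \frac{4K\eps\Delta\sqrt{\log n}}{\sqrt{cn}}+\frac{8\eps^2\Delta^2}{c\,\delta_{\mathrm{gap}}}.
\]
For the first term I use $\Delta\le n$, so $\Delta/\sqrt n\le\sqrt\Delta$, which bounds it by $O(\eps\sqrt{\Delta\log n})$. For the second term I substitute $\delta_{\mathrm{gap}}=\Theta(\Delta)$ to get $O(\eps^2\Delta)$. The total failure probability is $o(1)+4n^{-1/2}=o(1)$. Finally, the simplified form follows because $\eps^2\Delta\le C\eps\sqrt{\Delta\log n}$ exactly when $\eps\le C\sqrt{\log n/\Delta}$.
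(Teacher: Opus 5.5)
Your proposal is correct and follows the paper's own proof. Like the paper, you condition on the event of Lemma~\ref{lem:sbm-spectral}, apply Theorem~\ref{thm:delocalized}, bound the first term using $\Delta\le n$ and the second using $\delta_{\mathrm{gap}}=\Theta(\Delta)$, and compose the failure probabilities $o(1)+4n^{-1/2}$.

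Where you are more careful than the paper is the hypothesis $8\eps\Delta/\sqrt{c}\le\delta_{\mathrm{gap}}$. The paper's proof never verifies it, and it can fail for admissible parameters (for example $c=4$ and $\eps$ near $1/2$), because $\delta_{\mathrm{gap}}/\Delta$ is only a constant strictly below $1$. Your fallback to Corollary~\ref{cor:weyl} closes this: in that regime $2\eps\Delta/\sqrt{c}\le 16\eps^2\Delta^2/(c\,\delta_{\mathrm{gap}})=O(\eps^2\Delta)$.

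On connectivity, your second argument is the valid one: $\delta_{\mathrm{gap}}>0$ together with entrywise positivity of $\vone$ forces $G$ to be connected. Large minimum degree alone would not suffice.
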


\begin{proof}
By Lemma~\ref{lem:sbm-spectral}, with probability $1 - o(1)$ over the draw of~$G$,
the graph satisfies:
$\Delta = \Theta(np)$, $\delta_{\mathrm{gap}} = \Theta(\Delta)$, and
$K$-delocalization with $K = O(1)$.
Conditionally on this event, Theorem~\ref{thm:delocalized} gives the stated bound
with probability at least $1 - 4n^{-1/2}$ over the SS sampling.
Composing: the overall failure probability is $o(1) + 4n^{-1/2} = o(1)$.

The first term is
$O(K\,\eps\Delta\sqrt{\log n}/\sqrt{n}) = O(\eps\Delta\sqrt{\log n}/\sqrt{n})$.
Since $\Delta = \Theta(np) = O(n)$ (because $p \leq 1$), we have
$\eps\Delta\sqrt{\log n}/\sqrt{n}
= \eps\sqrt{\Delta \cdot (\Delta/n) \cdot \log n}
\leq \eps\sqrt{\Delta\log n}$.
The second term is $O(\eps^2\Delta^2/\delta_{\mathrm{gap}}) = O(\eps^2\Delta)$.
\end{proof}

\begin{remark}[SBM regime limitation]
The condition $r = \Theta(p)$ restricts this corollary to SBMs with relatively dense
inter-community connectivity. When $r \ll p$ (well-separated communities), the spectral
gap $\delta_{\mathrm{gap}} \approx nr$ is much smaller than
$\Delta \approx np/k$, so the resolvent condition $8\eps\Delta/\sqrt{c} \leq
\delta_{\mathrm{gap}}$ requires $\eps = O(kr/p)$, which is small. In particular, the
most commonly studied regime $r = o(p)$ is excluded from the hypothesis of
Theorem~\ref{thm:delocalized}, though the deterministic bound
$|\lamH - \lamG| \leq \eps(2\Delta - \lamG)$ remains valid unconditionally.
\end{remark}

\begin{remark}[Scope of the bound]
Theorem~\ref{thm:delocalized} applies to all graph
families satisfying: (i) $K$-delocalization with $K = O(1)$, (ii) spectral gap
$\delta_{\mathrm{gap}} = \Omega(\Delta)$, and (iii) $\Delta = \Omega(\log n)$. This
includes dense Erd\H{o}s--R\'{e}nyi, regular expanders, balanced SBMs with $r = \Theta(p)$,
and more generally
any graph sequence with $\Delta/\lamG = O(1)$ and polylogarithmic eigenvector
delocalization. The question of sparse heterogeneous graphs where the
Perron eigenvector localizes on high-degree vertices (e.g., heavy-tailed power-law graphs
with $\beta < 3$) is an important direction for future work
(see Section~\ref{sec:discussion}).
\end{remark}

\subsection{Resolvent Condition for Graph Families}\label{sec:resolvent-examples}

Theorem~\ref{thm:resolvent} requires the condition
$\|\Phi\|_2 < \delta_{\mathrm{gap}}/2$, which (by Theorem~\ref{thm:bernstein}) holds when
$4\eps\Delta/\sqrt{c} < \delta_{\mathrm{gap}}$.
The stronger condition needed for the clean remainder bound of Lemma~\ref{lem:remainder}
(and hence for Theorem~\ref{thm:delocalized}) is
\begin{equation}\label{eq:resolvent-condition}
\eps \leq \frac{\sqrt{c}\,\delta_{\mathrm{gap}}}{8\Delta}.
\end{equation}
We now evaluate this condition for several graph families.

\begin{proposition}[Resolvent condition for graph families]\label{prop:resolvent-families}
For $c = 4$, condition~\eqref{eq:resolvent-condition} becomes
$\eps \leq \delta_{\mathrm{gap}}/(4\Delta)$. The following table gives the resulting constraint:

\begin{center}
\begin{tabular}{@{}lccc@{}}
\toprule
Family & $\delta_{\mathrm{gap}}$ & $\Delta$ & Max.\ $\eps$ for resolvent \\
\midrule
$d$-regular expander & $\Theta(d)$ & $d$ & $\Theta(1)$ \\
$d$-regular Ramanujan & $d - 2\sqrt{d-1}$ & $d$ & $\Theta(1)$ (for large $d$) \\
$\mathrm{ER}(n,p)$, $p$ const.\ & $\Theta(np)$ & $(1+o(1))np$ & $\Theta(1)$ \\
Bal.\ SBM$(p,r)$, $p > 2r$ & $\Theta(nr)$ & $\Theta(n(p{+}r))$ & $\Theta(r/(p{+}r))$ \\
$K_{1,n-1}$ (star) & $\sqrt{n-1}$ & $n-1$ & $\Theta(1/\sqrt{n})$ \\
Barbell $K_m$--$K_m$ & $\Theta(1/m)$ & $m$ & $\Theta(1/m^2)$ \\
\bottomrule
\end{tabular}
\end{center}
\end{proposition}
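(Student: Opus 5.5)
The statement is a row-by-row evaluation, so I would first substitute $c=4$ into \eqref{eq:resolvent-condition}: $\sqrt{4}/8 = 1/4$, which gives $\eps \le \delta_{\mathrm{gap}}/(4\Delta)$. Each row then needs two inputs, $\delta_{\mathrm{gap}}=\lambda_1-\lambda_2$ and $\Delta$, and the last column is their ratio divided by~$4$. Constants are suppressed throughout, so every entry only has to be correct up to $\Theta(\cdot)$.

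\emph{Regular and random families.}
\begin{itemize}
\item For $d$-regular graphs, $\Delta=\lambda_1=d$. An expander is defined by $\lambda_2\le(1-\eta)d$ for a fixed $\eta>0$, so $\delta_{\mathrm{gap}}=\Theta(d)$ and the ratio is $\Theta(1)$.
\item For Ramanujan graphs, $\lambda_2\le 2\sqrt{d-1}$, so $\delta_{\mathrm{gap}}\ge d-2\sqrt{d-1}$ and $\delta_{\mathrm{gap}}/(4d)\to 1/4$ as $d\to\infty$.
\item For $G(n,p)$ with $p$ constant, I would import items (i)--(iii) from the proof of Corollary~\ref{cor:er}: $\Delta=(1+o(1))np$ and $\delta_{\mathrm{gap}}=(1-o(1))np$.
\item For the balanced SBM, Lemma~\ref{lem:sbm-spectral} gives the population gap $\bar\delta_{\mathrm{gap}}=nr$ and the transfer to $G$ via Weyl's inequality and \citet{le2017concentration}. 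Note that $\delta_{\mathrm{gap}}=\Theta(nr)$ is valid even when $r\ll p$ as long as $\sqrt{np}=o(nr)$, which I would state as the implicit assumption. For $k=O(1)$ degree concentration gives $\Delta=\Theta(n(p+r))$, so the ratio is $\Theta(r/(p+r))$. The hypothesis $p>2r$ only ensures that the two population eigenvalues are ordered as written, i.e.\ that $\bar\lambda_2$ is the block eigenvalue $(n/k)(p-r)$ rather than a negative one.
\end{itemize}

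\emph{Star.} The spectrum of $K_{1,n-1}$ is $\pm\sqrt{n-1}$ together with $0$ of multiplicity $n-2$. Hence $\lambda_2=0$ for $n\ge3$, $\delta_{\mathrm{gap}}=\sqrt{n-1}$ and $\Delta=n-1$, so the ratio is $1/(4\sqrt{n-1})=\Theta(1/\sqrt n)$.

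\emph{Barbell.} This is the hard row. I take two copies of $K_m$ joined by one bridge edge, so $\Delta=m$ (the bridge endpoints have degree $m$). I would use the reflection symmetry swapping the two cliques to split the spectrum into symmetric and antisymmetric eigenvectors. Both top eigenvalues come from vectors that are near-constant on each clique: $\lambda_1$ from the symmetric one, $\lambda_2$ from the antisymmetric one.

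Write $a$ for the common value on the non-bridge vertices of a clique and $b$ for the value on its bridge endpoint. The symmetric sector reduces to the $2\times2$ system
\[
\lambda a=(m-2)a+b, \qquad \lambda b=(m-1)a+b,
\]
and the antisymmetric sector to the same system with the second equation replaced by $\lambda b=(m-1)a-b$. Expanding the top root of each quadratic in $1/m$ gives $\lambda_{1,2}=m-1\pm\Theta(1/m)$. The gap is therefore $\Theta(1/m)$, and the ratio is $\Theta(1/m^2)$. A useful cross-check is first-order perturbation: the bridge term contributes $\pm 2x_ux_v\approx\pm 1/m$ with $x\approx\mathbf 1/\sqrt{2m}$ on each clique, so the gap is $\approx 2/m$.

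I expect this barbell computation to be the only real obstacle. It requires confirming that no other eigenvector (for example one with mean zero within a clique, whose eigenvalues sit near $-1$) lies between the two near-$(m-1)$ eigenvalues. That is immediate from interlacing with $K_m\oplus K_m$, whose spectrum is $m-1$ with multiplicity $2$ and $-1$ otherwise.
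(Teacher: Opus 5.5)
Your proposal is correct and follows the paper's route: set $c=4$ so that \eqref{eq:resolvent-condition} becomes $\eps\le\delta_{\mathrm{gap}}/(4\Delta)$, then read off $\delta_{\mathrm{gap}}$ and $\Delta$ family by family. It is more careful than the paper in three places:
\begin{itemize}
\item The paper only asserts $\delta_{\mathrm{gap}}=\Theta(1/m)$ for the barbell, whereas you compute it.
\item Your asymptotic phrasing for the Ramanujan row avoids the paper's claim that $\delta_{\mathrm{gap}}\ge d/2$ for $d\ge 10$. That inequality actually needs $d\ge 15$; at $d=10$ one has $d-2\sqrt{d-1}=4<5$.
\item You correctly note that the SBM row with $r\ll p$ goes beyond the $r=\Theta(p)$ hypothesis of Lemma~\ref{lem:sbm-spectral} and needs $\sqrt{np}=o(nr)$.
\end{itemize}

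\textbf{The barbell separation step needs repair.} Interlacing with $K_m\oplus K_m$ is not enough as stated. The bridge adds $E=\mathbf{e}_u\mathbf{e}_v^\top+\mathbf{e}_v\mathbf{e}_u^\top$, a rank-two perturbation with eigenvalues $\pm1$. Interlacing then only gives $\lambda_3\le\lambda_2(K_m\oplus K_m)=m-1$. That does not rule out $\lambda_3$ lying above your antisymmetric root $m-1-\Theta(1/m)$.

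Either of the following fixes closes the gap:
\begin{itemize}
\item Use Weyl instead. Since $\|E\|_2=1$, you get $\lambda_3\le -1+1=0$, so the two positive roots you exhibit must be $\lambda_1$ and $\lambda_2$.
\item Use an equitable partition. Let $S_i$ be the non-bridge vertices of clique $i$; then $\{S_1,\{u\},S_2,\{v\}\}$ is equitable. The span of its four indicator vectors is $A$-invariant, and $A$ acts as $-I$ on the orthogonal complement (vectors vanishing at $u,v$ with zero sum on each $S_i$). So the spectrum is exactly the roots of $\lambda^2-(m-1)\lambda-1$ and $\lambda^2-(m-3)\lambda-(2m-3)$, together with $-1$ of multiplicity $2m-4$.
\end{itemize}
The second fix has two extra benefits. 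It justifies your ansatz that the top eigenvectors are constant on each $S_i$, which you currently take for granted. It also gives the gap $\frac{1}{m-1}+\frac{1}{m+1}+O(m^{-3})\sim 2/m$, in agreement with your first-order check.

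A small side remark on the SBM row: $p>r$ already suffices for $\bar\lambda_2=(n/k)(p-r)-p\ge -p$ to be the second population eigenvalue. So that ordering is not what the hypothesis $p>2r$ is doing.
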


\begin{proof}
We verify each family.

\medskip\noindent\emph{$d$-regular expander.}
$\delta_{\mathrm{gap}} \geq c_0 d$ for a constant $c_0 > 0$.
Condition: $\eps \leq c_0/4$.

\medskip\noindent\emph{$d$-regular Ramanujan.}
$\lambda_2 \leq 2\sqrt{d-1}$, so $\delta_{\mathrm{gap}} \geq d - 2\sqrt{d-1}$.
For $d \geq 10$: $\delta_{\mathrm{gap}} \geq d/2$, giving $\eps \leq 1/8$.

\medskip\noindent\emph{$\mathrm{ER}(n,p)$.}
$\delta_{\mathrm{gap}} = np - O(\sqrt{np})$, $\Delta = (1+o(1))np$.
Condition: $\eps \leq 1/4 - O(1/\sqrt{np})$.

\medskip\noindent\emph{Balanced SBM.}
$\delta_{\mathrm{gap}} \approx nr$, $\Delta \approx n(p+r)/2$.
Condition: $\eps \leq r/(2(p+r))$.

\medskip\noindent\emph{Star.}
$\delta_{\mathrm{gap}} = \sqrt{n-1}$, $\Delta = n-1$.
Condition: $\eps \leq 1/(4\sqrt{n-1})$.

\medskip\noindent\emph{Barbell.}
$\delta_{\mathrm{gap}} = \Theta(1/m)$, $\Delta = m$.
Condition: $\eps \leq \Theta(1/m^2)$.
\end{proof}

\section{Lower Bounds and Tightness}\label{sec:lower}

We now establish that the deterministic bounds are tight up to the
Perron eigenvalue factor, and characterize the exact worst-case distortion for important
graph classes.

\subsection{Tightness for Regular Graphs}

\begin{proposition}[Deterministic bound is tight for regular graphs]\label{prop:tight-regular}
For any connected $d$-regular graph~$G$ on $n \geq 3$ vertices and any $\eps \in (0, 1)$,
there exists a $(1\pm\eps)$-Laplacian sparsifier~$H$ (in fact, a reweighting sparsifier)
such that
\[
|\lamH - \lamG| = \eps\,d = \eps(2\Delta - \lamG),
\]
matching the upper bound of Theorem~\ref{thm:abs-det}.
\end{proposition}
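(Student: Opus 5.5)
The construction is uniform scaling: take $H$ to have the same vertex and edge set as $G$, with every edge weight equal to $1+\eps$ (or, symmetrically, $1-\eps$). Then $\AH = (1+\eps)\AG$, $D_H = (1+\eps)D_G$, and $\LH = (1+\eps)\LG$. The plan is to verify that $H$ meets the hypotheses, compute the distortion, and compare it with Theorem~\ref{thm:abs-det}.

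\emph{Sparsifier property.} The inequalities $(1-\eps)\LG \preceq (1+\eps)\LG \preceq (1+\eps)\LG$ hold because $\LG \succeq 0$ and $1-\eps \le 1+\eps$. Every weight lies in $[1-\eps,1+\eps]$, so $H$ is a $(1\pm\eps)$-reweighting sparsifier in the sense of Definition~\ref{def:reweight}. It is therefore also a $(1\pm\eps)$-Laplacian sparsifier, and Theorem~\ref{thm:abs-det} applies to it.

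\emph{Distortion.} For a connected $d$-regular graph, $\lamG = d$ with $\vone = \mathbf{1}/\sqrt{n}$ (Remark~\ref{rem:gamma-bounds}). Hence $\lamH = (1+\eps)d$ and $|\lamH - \lamG| = \eps d$. Since $\Delta = \lamG = d$, we have $2\Delta - \lamG = d$. So $\eps d = \eps(2\Delta - \lamG)$, and equality holds in Theorem~\ref{thm:abs-det}. The scaling $w_e = 1-\eps$ gives the same equality with $\lamH = (1-\eps)d$. This case is consistent with Theorem~\ref{thm:pf-sharp}, whose bound $\eps\lamG$ coincides with $\eps(2\Delta-\lamG)$ for regular graphs.

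\emph{Main obstacle.} The only point that needs care is confirming that the paper's definition of a $(1\pm\eps)$-Laplacian sparsifier allows $H$ to keep every edge of $G$ and to use arbitrary positive weights. Definition~\ref{def:reweight} explicitly permits this, so no genuine edge deletion is needed. The hypothesis $n \ge 3$ is not used by the construction itself; it only excludes degenerate small cases. No probabilistic ingredient is required.
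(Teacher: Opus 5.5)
Your proposal is correct and matches the paper's proof: set every edge weight to $1+\eps$, so $\LH = (1+\eps)\LG$ and $\AH = (1+\eps)\AG$, hence $\lamH - \lamG = \eps d = \eps(2\Delta - \lamG)$. Your additional remarks are accurate but not needed for the statement: the symmetric choice $1-\eps$ works equally well, and the hypothesis $n \ge 3$ is never used.
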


\begin{proof}
Set $H$ to be the graph $G$ with every edge weight equal to $(1 + \eps)$.
Then $\LH = (1+\eps)\LG$, so $H$ is a valid $(1\pm\eps)$-Laplacian sparsifier.
Since $\AH = (1+\eps)\AG$, we have $\lamH = (1+\eps)d$,
so $\lamH - \lamG = \eps d = \eps(2d - d)$.
\end{proof}

\subsection{Lower Bound for Heterogeneous Graphs}

\begin{proposition}[Star graph: tight characterization]\label{prop:star-tight}
For the star $K_{1,n-1}$ ($n \geq 3$) and any $\eps \in (0,1)$, the worst-case
distortion over all $(1\pm\eps)$-Laplacian sparsifiers retaining all edges is exactly
\[
\max_H |\lamH - \lambda_1(A_{K_{1,n-1}})| = \eps\sqrt{n-1} = \eps\sqrt{\Delta},
\]
while the deterministic upper bound gives $\eps(2(n-1) - \sqrt{n-1}) \approx 2\eps\Delta$.
The gap is a factor of $2\sqrt{\Delta} - 1 = \Theta(\sqrt{\Delta})$.
\end{proposition}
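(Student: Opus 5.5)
The plan is to reduce everything to the tree machinery of Section~\ref{sec:reweight}. I read ``sparsifiers retaining all edges'' the way Corollary~\ref{cor:tree-sharp} is used: $H$ lives on the same vertex and edge set as $K_{1,n-1}$, with positive edge weights $(w_e)$. I flag this reading explicitly. Extra leaf--leaf edges could in principle be compatible with the Laplacian sandwich, and that case is not covered by the argument below.

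The proof has three steps.

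\emph{Step 1 (upper bound).} Since $K_{1,n-1}$ is a tree, Proposition~\ref{prop:tree} shows that $H$ is a $(1\pm\eps)$-Laplacian sparsifier if and only if $w_e \in [1-\eps,1+\eps]$ for every edge. Hence $H$ is a $(1\pm\eps)$-reweighting sparsifier in the sense of Definition~\ref{def:reweight}. Theorem~\ref{thm:pf-sharp} (equivalently Corollary~\ref{cor:tree-sharp}) then gives $|\lamH - \sqrt{n-1}| \le \eps\sqrt{n-1}$, using $\lambda_1(A_{K_{1,n-1}}) = \sqrt{n-1}$ from Proposition~\ref{prop:star-gamma}.

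\emph{Step 2 (attainment).} Take all weights equal to $1+\eps$, as in Proposition~\ref{prop:tight-regular}. Then $L_H=(1+\eps)L_G$, which is valid, and $A_H=(1+\eps)A_G$, so $\lamH=(1+\eps)\sqrt{n-1}$ and the distortion is exactly $\eps\sqrt{n-1}$. So the maximum is attained and equals $\eps\sqrt{n-1}=\eps\sqrt{\Delta}$.

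As a cross-check I would also record the explicit formula for a weighted star. Writing $x_{\mathrm{hub}}=1$ and $x_\ell = w_\ell/\lambda$ in the eigen-equation gives $\lamH = \sqrt{\sum_e w_e^2}$. This function is increasing in each weight, so it confirms that the extremes over the box $[1-\eps,1+\eps]^{n-1}$ are $(1\pm\eps)\sqrt{n-1}$. Both endpoints give distortion $\eps\sqrt{n-1}$, so the maximum absolute deviation is exactly that.

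\emph{Step 3 (comparison).} Substitute $\Delta=n-1$ and $\lamG=\sqrt{n-1}$ into Theorem~\ref{thm:abs-det} to get $\eps(2(n-1)-\sqrt{n-1}) = 2\eps\Delta - \eps\sqrt{\Delta} \approx 2\eps\Delta$. Dividing by the exact worst case $\eps\sqrt{\Delta}$ gives the gap factor $(2\Delta-\sqrt{\Delta})/\sqrt{\Delta} = 2\sqrt{\Delta}-1=\Theta(\sqrt{\Delta})$.

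Steps 1--3 are routine once Proposition~\ref{prop:tree} applies. The real obstacle is the scope question: with the same-edge-set reading the result is a direct corollary, and I would state that interpretation at the outset rather than attempt to handle leaf--leaf edges.
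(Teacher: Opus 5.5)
Your proposal is correct and follows essentially the paper's route. Under the same same-edge-set reading the paper adopts, Proposition~\ref{prop:tree} confines the weights to $[1-\eps,1+\eps]$; the paper then reads off the extremes from the explicit formula $\lamH=\sqrt{\sum_e w_e^2}$, whereas you lean on Theorem~\ref{thm:pf-sharp} and keep that formula as a cross-check. Your observation that the lower endpoint $w\equiv 1-\eps$ also gives distortion exactly $\eps\sqrt{n-1}$ makes the absolute-value claim slightly more complete than the paper's one-sided ``maximizing'' step.
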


\begin{proof}
By Proposition~\ref{prop:tree}, any $(1\pm\eps)$-Laplacian
sparsifier retaining all edges has edge weights $w_j \in [1-\eps, 1+\eps]$ for
$j = 1, \ldots, n-1$. For a weighted star with weights $(w_j)$, the nonzero eigenvalues
of $\AH$ are $\pm\sqrt{\sum_j w_j^2}$
(the eigenvalue equation gives $\lambda^2 = \sum w_j^2$).

Maximizing: the unique maximizer
is $w_j = 1+\eps$ for all $j$, giving $\lamH = (1+\eps)\sqrt{n-1}$. Hence
$\lamH - \lamG = \eps\sqrt{n-1} = \eps\sqrt{\Delta}$.
\end{proof}

\begin{theorem}[General lower bound]\label{thm:general-lower}
For any connected graph $G$ and any $\eps \in (0,1)$:
\[
\sup_{H \in \mathcal{S}_\eps(G)} |\lamH - \lamG| \;\geq\; \eps\,\lamG,
\]
where $\mathcal{S}_\eps(G)$ denotes the set of all $(1\pm\eps)$-Laplacian sparsifiers
of $G$.

Moreover, equality holds for the uniform reweighting $H_{\pm} = (1\pm\eps)\cdot G$.
\end{theorem}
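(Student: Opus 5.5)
The plan is to exhibit explicit members of $\mathcal{S}_\eps(G)$ that realize distortion exactly $\eps\lamG$. The natural candidates are the two uniform reweightings $H_+ = (1+\eps)\cdot G$ and $H_- = (1-\eps)\cdot G$, meaning the graph on the same vertex and edge set as $G$ with every edge weight equal to $1+\eps$ (respectively $1-\eps$). This mirrors the construction in Proposition~\ref{prop:tight-regular}, but we keep $\lamG$ in place of $d$.

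\emph{Membership.} For $H_\pm$ we have $L_{H_\pm} = (1\pm\eps)\LG$, and $(1-\eps)\LG \preceq (1\pm\eps)\LG \preceq (1+\eps)\LG$ holds trivially because $\LG \succeq 0$. So both $H_+$ and $H_-$ lie in $\mathcal{S}_\eps(G)$. Since $\eps<1$, $H_-$ has positive weights. Both are also reweighting sparsifiers in the sense of Definition~\ref{def:reweight}.

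\emph{Computation.} We have $A_{H_\pm} = (1\pm\eps)\AG$ with $1\pm\eps>0$, so the spectrum scales and $\lambda_1(A_{H_\pm}) = (1\pm\eps)\lamG$. Hence $|\lambda_1(A_{H_\pm}) - \lamG| = \eps\lamG$. Taking the supremum over $\mathcal{S}_\eps(G)$, which contains $H_+$, gives $\sup_H |\lamH-\lamG| \geq \eps\lamG$.

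\emph{The ``equality'' clause.} I read ``equality holds for $H_\pm$'' as saying that the distortion attained by $H_\pm$ equals $\eps\lamG$, which the computation above shows directly. Note that the supremum over all of $\mathcal{S}_\eps(G)$ can strictly exceed $\eps\lamG$: for regular graphs it meets $\eps(2\Delta-\lamG)$, and in general Theorem~\ref{thm:abs-det} only caps it at $\eps(2\Delta-\lamG)$. So I would not claim the supremum equals $\eps\lamG$. As a complement, Theorem~\ref{thm:pf-sharp} shows that $\eps\lamG$ is the exact supremum over the subclass of reweighting sparsifiers, and that $H_+$ and $H_-$ attain it.

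There is no real obstacle here. The only points needing care are that the scaling factor is positive, so that the top eigenvalue maps to the top eigenvalue (guaranteed by $\eps<1$), and stating precisely what the equality assertion refers to.
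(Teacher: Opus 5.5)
Your proof is correct and is the same as the paper's: take the uniform reweighting $H_+=(1+\eps)\cdot G$, note that $L_{H_+}=(1+\eps)\LG$ puts it in $\mathcal{S}_\eps(G)$, and read off $\lambda_1(A_{H_+})=(1+\eps)\lamG$; checking $H_-$ and the positivity of $1\pm\eps$ is a harmless addition. One correction to your side remark: for $d$-regular graphs $\eps(2\Delta-\lamG)=\eps d=\eps\lamG$, so that example shows the supremum \emph{equals} $\eps\lamG$ rather than exceeding it, and the paper gives no graph where the supremum strictly exceeds $\eps\lamG$ (the related Open Problem~\ref{op:genuine} is left open) — your caution about the ``equality'' clause is right, but its justification should only be that Theorem~\ref{thm:abs-det} caps the supremum at $\eps(2\Delta-\lamG)$.
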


\begin{proof}
The uniform scaling $H_+$ with all edge weights $(1+\eps)$ satisfies
$\LH = (1+\eps)\LG$. Since $\AH = (1+\eps)\AG$, we get
$\lamH - \lamG = \eps\lamG$.
\end{proof}

\begin{corollary}[Tightness gap]\label{cor:gap}
The ratio between the deterministic upper bound (Theorem~\ref{thm:abs-det}) and the
reweighting lower bound (Theorem~\ref{thm:general-lower}) is exactly
\[
\frac{\eps(2\Delta - \lamG)}{\eps\,\lamG} = \frac{2\Delta}{\lamG} - 1.
\]
This ratio is~$1$ for regular graphs (the bound is tight) and
$\Theta(\sqrt{\Delta})$ for star-like graphs. The gap arises because the
deterministic proof bounds the degree and Laplacian perturbations independently,
but in actual sparsifiers these are correlated.
\end{corollary}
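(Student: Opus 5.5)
The ratio is obtained by dividing the two quantities already established, so the plan is mostly to check that both are legitimate and then to evaluate the ratio on the two named families. First I will record the numerator $\eps(2\Delta - \lamG)$ from Theorem~\ref{thm:abs-det} and the denominator $\eps\lamG$ from Theorem~\ref{thm:general-lower}. The denominator is attained exactly by the uniform reweighting $H_+ = (1+\eps)\cdot G$. Since $G$ is connected, Perron--Frobenius gives $\lamG > 0$, so the division is well defined. The identity
\[
\frac{\eps(2\Delta - \lamG)}{\eps\,\lamG} = \frac{2\Delta}{\lamG} - 1
\]
then follows by cancelling $\eps$.

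Next I will evaluate the ratio on the two cases.
\begin{itemize}
\item \emph{Regular graphs.} For $d$-regular $G$ we have $\Delta = \lamG = d$, so the ratio equals $1$. This is consistent with Proposition~\ref{prop:tight-regular}, which exhibits a sparsifier meeting the upper bound exactly. Tightness therefore follows from the combination of Theorem~\ref{thm:abs-det} with that proposition.
\item \emph{Star-like graphs.} The guiding example is $K_{1,n-1}$, where $\Delta = n-1$ and $\lamG = \sqrt{n-1}$. Here the ratio is $2\sqrt{\Delta} - 1 = \Theta(\sqrt{\Delta})$, matching Proposition~\ref{prop:star-tight}. I will use the lower bound $\lamG \ge \sqrt{\Delta}$ from the proof of Corollary~\ref{cor:relative}. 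It shows $2\Delta/\lamG - 1 \le 2\sqrt{\Delta} - 1$ for every graph, so $\Theta(\sqrt{\Delta})$ is also the worst case. I will interpret ``star-like'' to mean $\lamG = \Theta(\sqrt{\Delta})$, which makes the $\Theta$ claim immediate in both directions.
\end{itemize}

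Finally, the sentence about correlation is interpretive rather than a mathematical claim. I will justify it by pointing to where the proofs of Theorems~\ref{thm:gamma-lower} and~\ref{thm:det-upper} lose tightness. They replace $D_H$ by its extreme $(1\pm\eps)D_G$ (Lemma~\ref{lem:degree}) and, separately, $\LH$ by its extreme $(1\mp\eps)\LG$. These two extremes cannot be attained simultaneously by a sparsifier whose degrees are the row sums of its own Laplacian. The star example of Proposition~\ref{prop:star-tight} shows concretely that the bound is then not attained.

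The only potential obstacle is making ``star-like'' precise. I resolve it by the definitional choice $\lamG = \Theta(\sqrt{\Delta})$ together with the universal upper bound $2\sqrt{\Delta} - 1$ above.
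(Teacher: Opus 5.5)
Your proposal is correct and is essentially the argument the paper leaves implicit: divide the bound of Theorem~\ref{thm:abs-det} by the $\eps\lamG$ of Theorem~\ref{thm:general-lower}, then specialize to $\Delta = \lamG = d$ and to $K_{1,n-1}$; your use of $\lamG \geq \sqrt{\Delta}$ from Corollary~\ref{cor:relative} to show that $2\sqrt{\Delta} - 1$ is a universal ceiling is a sound addition. One caveat concerns your gloss on the final sentence: the claim that the degree and Laplacian extremes ``cannot be attained simultaneously'' holds only at the operator level (since $D_H$ is the diagonal of $\LH$), but the proofs use quadratic forms at the maximizing vector, and for regular graphs at $u = \mathbf{1}/\sqrt{n}$ both extremes are attained at once---which is exactly why the bound is tight there---so the correlation heuristic explains slack only in directions where $u^\top \LG u$ is not negligible, and whether edge-deleting sparsifiers can exploit the gap remains Open Problem~\ref{op:genuine}.
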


\begin{openproblem}\label{op:genuine}
Is there a connected graph $G$ and a $(1\pm\eps)$-Laplacian
sparsifier $H$ (with some edges deleted) such that $|\lamH - \lamG| = \omega(\eps\lamG)$?
\end{openproblem}

\subsection{Lower Bound for SS Sampling on the Star}

\begin{proposition}[Expected SS distortion on the star]\label{prop:star-ss}
For the star $K_{1,n-1}$ under SS sampling with $q = \lceil cn\log n/\eps^2\rceil$
samples:
\[
0 \;\leq\; \E[\lamH] - \lamG
\;\leq\; \frac{\eps^2\sqrt{n-1}}{2c\log n}\!\left(1 + O(1/n)\right).
\]
In particular, $\E[\lamH] - \lamG = O(\eps^2\sqrt{\Delta}/\log n)$.
\end{proposition}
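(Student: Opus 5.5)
The lower bound $\E[\lamH] \geq \lamG$ is exactly Proposition~\ref{thm:jensen}, so only the upper bound needs work. The plan is to exploit the explicit spectrum of a weighted star. For the star every edge is a bridge, so $R_e = 1$ and $p_e = 1/(n-1)$: SS sampling is uniform over the $n-1$ leaves. Hence $H$ is a weighted star (a sub-star) with weights $w_j = k_j (n-1)/q$, where $(k_1,\dots,k_{n-1})$ is multinomial with $q$ trials and equal cell probabilities $1/(n-1)$. As in the proof of Proposition~\ref{prop:star-tight}, the top adjacency eigenvalue of a weighted star is $\lamH = \sqrt{S}$ with $S := \sum_j w_j^2$. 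This formula holds whether or not some $w_j = 0$.

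\textbf{Step 1: apply Jensen to the concave square root.} This gives $\E[\lamH] = \E[\sqrt{S}] \leq \sqrt{\E[S]}$, so it suffices to compute $\E[S]$ exactly. From the multinomial moments, $\E[k_j] = q/(n-1)$ and $\Var(k_j) = q\cdot\frac{1}{n-1}\bigl(1-\frac{1}{n-1}\bigr)$. Therefore
\[
\E[w_j^2] = 1 + \frac{(n-1)^2}{q^2}\Var(k_j) = 1 + \frac{n-2}{q},
\qquad
\E[S] = (n-1)\Bigl(1 + \frac{n-2}{q}\Bigr).
\]

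\textbf{Step 2: convert to the stated bound.} Using $\sqrt{1+x} \leq 1 + x/2$, we get
\[
\E[\lamH] - \sqrt{n-1} \;\leq\; \sqrt{n-1}\,\frac{n-2}{2q}.
\]
Now substitute $q \geq cn\log n/\eps^2$. This yields
\[
\frac{\eps^2\sqrt{n-1}}{2c\log n}\cdot\frac{n-2}{n},
\]
which is at most $\frac{\eps^2\sqrt{n-1}}{2c\log n}$. That is within the stated $(1+O(1/n))$ factor; in fact the factor $(n-2)/n = 1 - O(1/n)$ means we do slightly better. Finally, $\Delta = n-1$ gives the $O(\eps^2\sqrt{\Delta}/\log n)$ form.

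\textbf{Main obstacle.} There is no deep difficulty here. The care points are: confirming that SS sampling on a tree is uniform (via Foster's theorem, or $R_e=1$ for bridges); confirming that deleted edges ($k_j=0$) do not break the $\sqrt{\sum w_j^2}$ formula; and getting the multinomial variance exactly right, so that the constant $1/(2c\log n)$ comes out with only a $1+O(1/n)$ correction.
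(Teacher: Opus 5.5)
Your proposal is correct and matches the paper's proof essentially step for step. The shared steps are: uniform sampling from $R_e = 1$; $\lamH = \sqrt{\sum_j w_j^2}$; the exact second moment $\E[\lamH^2] = (n-1)\bigl(1 + (n-2)/q\bigr)$; Jensen for the concave square root followed by $\sqrt{1+x} \le 1 + x/2$; and the lower bound from convexity of $\lambda_1$ together with unbiasedness. Your observation that the leftover factor $(n-2)/n$ is at most $1$ is consistent with the paper's $(1+O(1/n))$ bookkeeping, and it shows the bound in fact holds without the correction factor.
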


\begin{proof}
For $K_{1,n-1}$, all effective resistances are $R_e = 1$, so
$p_e = 1/(n-1)$ for all $e$. Edge $e$ is sampled $k_e$ times where
$(k_1, \ldots, k_{n-1}) \sim \mathrm{Multinomial}(q; 1/(n-1), \ldots, 1/(n-1))$,
and receives weight $w_e = k_e(n-1)/q$.

The spectral radius is $\lamH = \sqrt{\sum_e w_e^2} = \frac{n-1}{q}\sqrt{\sum_e k_e^2}$.
Since $k_e \sim \mathrm{Binomial}(q, 1/(n-1))$:
\[
\E[k_e^2] = \Var(k_e) + (\E[k_e])^2
= \frac{q(n-2)}{(n-1)^2} + \frac{q^2}{(n-1)^2}
= \frac{q(q+n-2)}{(n-1)^2}.
\]
Therefore
$\E[\sum_e k_e^2] = q(q+n-2)/(n-1)$,
and
\[
\E[\lamH^2] = \frac{(n-1)^2}{q^2} \cdot \frac{q(q+n-2)}{n-1}
= (n-1)\!\left(1 + \frac{n-2}{q}\right).
\]

\medskip\noindent\textbf{Lower bound.}
Since $A \mapsto \lambda_1(A)$ is convex and $\E[\AH] = \AG$, Jensen's inequality gives
$\E[\lamH] \geq \lamG$.

\medskip\noindent\textbf{Upper bound.}
Since $x \mapsto \sqrt{x}$ is concave, Jensen's inequality gives
$\E[\lamH] = \E[\sqrt{\lamH^2}] \leq \sqrt{\E[\lamH^2]}$.
Using $\sqrt{1 + x} \leq 1 + x/2$ for $x \geq 0$
(which follows from $(1+x/2)^2 = 1 + x + x^2/4 \geq 1 + x$):
\[
\sqrt{\E[\lamH^2]}
= \lamG\sqrt{1 + \frac{n-2}{q}}
\leq \lamG\!\left(1 + \frac{n-2}{2q}\right).
\]
Since $q = \lceil cn\log n/\eps^2 \rceil \geq cn\log n/\eps^2$, we have
$1/q \leq \eps^2/(cn\log n)$, giving
\[
\lamG\!\left(1 + \frac{n-2}{2q}\right)
\leq \lamG + \frac{\eps^2\sqrt{n-1}\,(n-2)}{2cn\log n}.
\]
Since $(n-2)/n = 1 - 2/n$, the upper bound is
$\lamG + \frac{\eps^2\sqrt{n-1}}{2c\log n}(1 + O(1/n))$.
\end{proof}

\begin{remark}
SS sampling on the star yields expected distortion at most
$O(\eps^2\sqrt{\Delta}/\log n)$, strictly smaller than both the
deterministic bound $O(\eps\Delta)$ and the reweighting worst case $\eps\sqrt{\Delta}$.
The exact computation $\E[\lamH^2] - \lamG^2 = \eps^2(n-1)/(c\log n)\cdot(1+o(1))$
strongly suggests the asymptotic
$\E[\lamH] - \lamG \sim \eps^2\sqrt{n-1}/(2c\log n)$;
the matching lower bound would follow from showing
$\Var(\lamH) = o((\E[\lamH] - \lamG)^2)$, which we conjecture holds but do not
prove here.
\end{remark}

\section{Comparison with Entry-wise Sparsification}\label{sec:comparison}

Achlioptas--McSherry~\citep{achlioptas2007fast} and related work
on matrix sparsification approach the problem from a different angle. We now make the
comparison precise.

\begin{proposition}[Entry-wise sparsification bound (after Achlioptas--McSherry)]%
\label{prop:am}
Let $G$ be a graph on $n$ vertices with $m$ edges and maximum degree~$\Delta$.
Suppose $H$ is constructed by independently retaining
each entry of $\AG$ with probability $p = s/n$ (for a parameter $1 \leq s \leq n$)
and rescaling by $n/s$. Then:
\[
\left\|\AH - \AG\right\|_2
= O\!\left(\sqrt{\frac{n\Delta\log n}{s}} + \frac{n\log n}{s}\right)
\]
with probability at least $1 - O(n^{-1})$.
For $s = \Omega(n\log n / \Delta)$, the first term dominates and
the bound simplifies to $O(\sqrt{n\Delta\log n / s})$; note that this simplified
regime requires $\Delta = \Omega(\log n)$ to satisfy the constraint $s \leq n$.
The resulting sparsifier $H$ has $\Theta(ms/n)$ edges in expectation but does \emph{not}
preserve the Laplacian in the $(1\pm\eps)$ sense.
\end{proposition}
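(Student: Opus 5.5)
We would prove this with the Matrix Bernstein inequality~\citep{tropp2012user}, paralleling the proof of Theorem~\ref{thm:bernstein}. We decompose the perturbation into independent symmetric mean-zero summands, one per unordered pair $\{i,j\}$ with $i<j$. Let $\xi_{ij}\sim\mathrm{Bernoulli}(p)$ be independent, $p=s/n$. We retain $(\AG)_{ij}$ and $(\AG)_{ji}$ together, so that $H$ is a symmetric graph. Set
\[
X_{ij} = (\AG)_{ij}\Bigl(\tfrac{\xi_{ij}}{p} - 1\Bigr)\bigl(\be_i\be_j^\top + \be_j\be_i^\top\bigr),
\]
so that $\AH - \AG = \sum_{i<j} X_{ij}$ with $\E[X_{ij}]=0$. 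Only the $m$ pairs that are edges of $G$ contribute.

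\textbf{Almost-sure bound.} Since $\|\be_i\be_j^\top+\be_j\be_i^\top\|_2 = 1$ and $|\xi/p - 1|\le 1/p$, each summand satisfies $\|X_{ij}\|_2 \le 1/p = n/s =: B$.

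\textbf{Variance proxy.} We have $X_{ij}^2 = (\AG)_{ij}(\xi_{ij}/p-1)^2(\be_i\be_i^\top+\be_j\be_j^\top)$ and $\E[(\xi/p-1)^2] = (1-p)/p \le n/s$. Hence $\sum_{i<j}\E[X_{ij}^2]$ is diagonal, with $(i,i)$ entry at most $d_i\, n/s$. This gives
\[
\sigma^2 \le \frac{n\Delta}{s}.
\]

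\textbf{Applying Bernstein.} Matrix Bernstein gives
\[
\Pr\bigl[\|\AH-\AG\|_2\ge t\bigr]\le 2n\exp\!\Bigl(-\frac{t^2/2}{\sigma^2+Bt/3}\Bigr).
\]
Choose $t = C\bigl(\sqrt{n\Delta\log n/s} + n\log n/s\bigr)$ with $C$ a large absolute constant. We split according to which term of the denominator dominates:
\begin{itemize}[nosep]
\item If $\sigma^2 \ge Bt/3$, the exponent is at least $t^2/(4\sigma^2) \ge (C^2/4)\log n$.
\item Otherwise, the exponent is at least $3t/(4B) \ge (3C/4)\log n$.
\end{itemize}
Taking $C\ge 4$ makes the exponent at least $3\log n$ in both cases, so the failure probability is at most $2n\cdot n^{-3} = O(n^{-1})$ (indeed $O(n^{-2})$).

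\textbf{Simplified regime.} The first term dominates exactly when $n\log n/s \le \sqrt{n\Delta\log n/s}$, i.e.\ when $s\ge n\log n/\Delta$. Combined with the constraint $s\le n$, this forces $\Delta \ge \log n$, which matches the remark in the statement.

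\textbf{Edge count and Laplacian failure.} The expected number of retained edges is $pm = ms/n$. Concentration by a scalar Chernoff bound gives $\Theta(ms/n)$ whenever $ms/n\gtrsim\log n$; in expectation it holds exactly. The failure to preserve the Laplacian is a qualitative remark, and we would justify it with a short example. If every edge at some vertex $i$ is dropped, then $d_i(H)=0$, and Lemma~\ref{lem:degree} fails at $x=\be_i$. This happens with probability $(1-p)^{d_i}$, which is non-negligible when $p d_i = O(1)$. More generally, rescaling by $n/s$ destroys per-vertex degree control unless $p\Delta\gg \log n$, so no $(1\pm\eps)$ guarantee can hold uniformly.

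\textbf{Main obstacle.} The one step needing care is reconciling the ``entry-wise'' sampling description with symmetry. Sampling the $(i,j)$ and $(j,i)$ entries independently would break symmetry of $\AH$ and force the Hermitian dilation. We will instead interpret the scheme as sampling unordered pairs, which changes only constants. Beyond that, the argument is routine bookkeeping of constants.
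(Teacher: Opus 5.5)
Your proposal is correct and follows the paper's route: Matrix Bernstein over independent symmetric summands (the paper's entries that are ``independent up to symmetry'' are exactly your unordered-pair sampling), with $B = n/s$ and per-row variance at most $d_i n/s \le \Delta n/s$. Your case split on which term of the Bernstein denominator dominates supplies the constant bookkeeping the paper leaves implicit, and your degree-zero example justifies the Laplacian-failure remark, which the paper leaves unproved. One small correction there: the obstruction is governed by the minimum degree (through $p\delta$) or by bridges, not by $p\Delta$.
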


\begin{proof}
Each entry $(A_H)_{ij}$ is an independent random variable equaling $A_{ij}/p$ with
probability $p = s/n$ and $0$ otherwise. The centered matrix
$\AH - \AG$ has independent (up to symmetry) entries. For the $(i,j)$ entry
with $A_{ij} = 1$:
\[
\Var\!\left[(A_H)_{ij} - A_{ij}\right]
= \frac{1}{p}(1-p) \cdot A_{ij}^2
\leq \frac{n}{s}.
\]
The variance per row is $\sum_{j} \Var[(A_H)_{ij} - A_{ij}]
\leq d_i \cdot n/s \leq \Delta n/s$.
Since the entries are bounded by $n/s$ in absolute value, the Matrix Bernstein
inequality~\citep{tropp2012user,bandeira2016sharp} gives
\[
\|\AH - \AG\|_2
\leq C\!\left(\sqrt{\Delta n/s \cdot \log n} + \frac{n\log n}{s}\right)
\]
with probability at least $1 - O(n^{-1})$, for an absolute constant $C > 0$.
\end{proof}

\begin{proposition}[Comparison of bounds]\label{prop:comparison}
For \emph{dense} graphs with $m = \Theta(n\Delta)$ and $\Delta/\lamG = \Theta(1)$
(quasi-regular), we equalize the expected number of retained edges between the two
schemes. The SS scheme retains $q = \Theta(n\log n / \eps^2)$ edges.
Entry-wise sampling with parameter~$s$ retains $\Theta(ms/n) = \Theta(\Delta\,s)$ edges.
Equalizing: $s = \Theta(n\log n/(\Delta\eps^2))$.
This requires $s \leq n$, i.e., $\Delta\eps^2 \geq C\log n$ for a constant~$C$,
which holds for sufficiently dense graphs.

Under this matching:
\begin{enumerate}[label=(\roman*),nosep]
\item \textbf{SS Bernstein bound} (Theorem~\ref{thm:bernstein}):
$\|\AH - \AG\|_2 = O(\eps\Delta/\sqrt{c})$.
\item \textbf{Entry-wise bound} (Proposition~\ref{prop:am}):
$\|\AH - \AG\|_2 = O\!\left(\sqrt{n\Delta\log n / s}\right) = O(\eps\Delta)$.
\end{enumerate}
Thus, at equal edge count, the generic operator-norm bounds are of the same order.
However, for graphs satisfying the delocalization and spectral-gap hypotheses of
Theorem~\ref{thm:delocalized}, the SS \emph{eigenvalue} distortion improves to
$O(\eps\Delta\sqrt{\log n}/\sqrt{n}) + O(\eps^2\Delta)$---with the first term a factor
of $\sqrt{n/\log n}$ below the
operator-norm bound---while entry-wise sampling provides no analogous eigenvalue
refinement. Moreover, SS preserves the Laplacian while entry-wise sampling does not.
\end{proposition}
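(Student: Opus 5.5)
The comparison is a bookkeeping exercise: equalize edge budgets, substitute the matched $s$ into Proposition~\ref{prop:am}, and set the result beside Theorem~\ref{thm:bernstein} and Theorem~\ref{thm:delocalized}.

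First I would fix the edge counts. Definition~\ref{def:ss} draws $q=\lceil cn\log n/\eps^2\rceil$ samples, so $H$ has at most $q$ distinct edges, and $\Theta(q)$ in the dense regime where collisions are negligible. Under entry-wise sampling each of the $2m$ nonzero entries is kept with probability $s/n$, so the expected edge count is $ms/n$. With $m=\Theta(n\Delta)$ this is $\Theta(\Delta s)$. Setting $\Delta s=\Theta(n\log n/\eps^2)$ gives $s=\Theta(n\log n/(\Delta\eps^2))$. The admissibility constraint $s\le n$ is then exactly $\Delta\eps^2\ge C\log n$.

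Second, I would check that this matched $s$ lies in the simplified regime of Proposition~\ref{prop:am}. That regime needs $s=\Omega(n\log n/\Delta)$, and since $\eps\le 1$ we have $s=\Theta(n\log n/(\Delta\eps^2))\ge \Omega(n\log n/\Delta)$. So the second term $n\log n/s$ is dominated, and the bound is
\[
O\bigl(\sqrt{n\Delta\log n/s}\bigr)=O\bigl(\sqrt{n\Delta\log n\cdot \Delta\eps^2/(n\log n)}\bigr)=O(\eps\Delta).
\]

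For the SS side, item (i) is Theorem~\ref{thm:bernstein} verbatim, giving $2\eps\Delta/\sqrt c=O(\eps\Delta/\sqrt c)$. So the two operator-norm bounds are of the same order, which is the first conclusion.

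For the refinement, I would invoke Theorem~\ref{thm:delocalized} under its hypotheses ($K=O(1)$, $\delta_{\mathrm{gap}}=\Omega(\Delta)$, $8\eps\Delta/\sqrt c\le\delta_{\mathrm{gap}}$). Its first term is $4K\eps\Delta\sqrt{\log n}/\sqrt{cn}$, and dividing $\eps\Delta$ by it gives the factor $\sqrt{n/\log n}$ up to constants. Its second term is $8\eps^2\Delta^2/(c\,\delta_{\mathrm{gap}})=O(\eps^2\Delta)$, using $\delta_{\mathrm{gap}}=\Omega(\Delta)$. The claim that SS preserves the Laplacian is Theorem~\ref{thm:ss}. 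The claim that entry-wise sampling does not preserve it comes from Lemma~\ref{lem:degree}, read contrapositively. Retaining each entry independently changes $d_i(H)$ by about $\sqrt{d_i n/s}$, which is of relative order $\eps$ only in the same regime. More decisively, a vertex can lose all its edges with non-negligible probability when $d_i s/n$ is small, and then $d_i(H)=0$, violating Lemma~\ref{lem:degree} for every $\eps<1$.

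The main obstacle is the assertion that entry-wise sampling admits no analogous eigenvalue refinement. As stated, this is a remark about available bounds rather than a theorem. I would present it as such: the first-order term $\vone^\top\Phi\vone$ concentrates for any unbiased independent sampling, so I would phrase the comparison as one between the proved results rather than claim a lower bound for entry-wise sampling.
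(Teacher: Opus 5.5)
Your proposal is correct and follows the paper's own argument. The paper gives no separate proof; its argument is the bookkeeping written into the statement: match budgets, substitute $s=\Theta(n\log n/(\Delta\eps^2))$ into Proposition~\ref{prop:am}, and read off Theorems~\ref{thm:bernstein} and~\ref{thm:delocalized}. Your check that the $n\log n/s$ term is then a factor $\eps$ smaller fills a step the paper leaves implicit.

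Your caution about ``no analogous eigenvalue refinement'' is well placed. At matched budget, the same scalar Bernstein computation gives entry-wise sampling a first-order variance of $O(K^2\eps^2\Delta^2/(n\log n))$, and Theorem~\ref{thm:resolvent} is deterministic, so the refinement is not specific to SS.

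The same caution applies to your degree-loss argument against Laplacian preservation. That argument only bites when some vertex has degree $O(\Delta\eps^2/\log n)$. For $K_n$, uniform entry-wise sampling essentially is effective-resistance sampling. So that claim is also best read as ``no guarantee'' rather than a theorem.
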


\begin{remark}[Density requirement]
The comparison requires $\Delta\eps^2 \geq C\log n$ for the edge budgets to match;
for sparser graphs, entry-wise sampling would need $s > n$. When the density condition
holds and Theorem~\ref{thm:delocalized} applies, the SS eigenvalue distortion is
$O(\eps\sqrt{\Delta\log n}) + O(\eps^2\Delta)$ for quasi-regular dense graphs,
well below the generic $O(\eps\Delta)$ operator-norm level.
\end{remark}

\section{Computational Validation}\label{sec:experiments}

We validated all bounds on Erd\H{o}s--R\'{e}nyi, balanced SBM, and hub-augmented
graphs with $n = 500$--$750$, $\eps \in \{0.5, 0.7\}$, $c \in \{4, 8\}$,
across 180 premise-verified trials (the $(1\pm\eps)$-Laplacian condition was checked
per trial via eigendecomposition). Zero bound violations were observed.

\begin{table}[ht]
\centering
\caption{Representative validation: operator norm vs.\ proved bounds vs.\ actual
eigenvalue distortion. The ratio $|\lamH{-}\lamG|/\|\AH{-}\AG\|_2 \approx 5\%$
matches the resolvent prediction $\|\Phi\|_2/\sqrt{n}$ for delocalized $\vone$.}
\label{tab:validation}
\begin{tabular}{@{}lcccccc@{}}
\toprule
Family & $\Delta/\lamG$ & $\|\AH{-}\AG\|_2$ & $\eps(2\Delta{-}\lamG)$
& $2\eps\Delta/\!\sqrt{c}$ & $|\lamH{-}\lamG|$ \\
\midrule
ER$(0.7)$, $n{=}600$    & 1.07 & 82.0 & 338.0 & 316.0 & 3.98 \\
Hub$(5, 0.7)$, $n{=}500$  & 1.41 & 70.6 & 451.4 & 349.3 & 3.51 \\
\bottomrule
\end{tabular}
\end{table}

Two key observations match the theory. First, the deterministic bound is conservative by
roughly two orders of magnitude for homogeneous graphs (the worst-case deterministic ratio
$|\lamH - \lamG|/[\eps(2\Delta - \lamG)]$ was at most $0.013$ across all trials),
consistent with the $1/\sqrt{n}$ scaling predicted by Theorem~\ref{thm:delocalized}.
Second, the actual eigenvalue distortion is consistently about $5\%$ of the operator
norm $\|\AH - \AG\|_2$, regardless of degree heterogeneity. This matches the resolvent
structure: $\vone^\top \Phi\vone$ is generically of order $\|\Phi\|_2/\sqrt{n}$
for delocalized~$\vone$, and $1/\sqrt{n} \approx 4\%$--$4.5\%$ at $n = 500$--$600$.

\section{Discussion and Open Problems}\label{sec:discussion}

\subsection{Implications for Epidemic Modeling}

These are the first analytic bounds on how the NIMFA
epidemic threshold surrogate $\tau_{\mathrm{mf}} = 1/\lamG$ changes under
Spielman--Srivastava sparsification. For graphs with $\Delta/\lamG = O(1)$
(including $d$-regular graphs, dense Erd\H{o}s--R\'{e}nyi, and balanced SBM),
the relative distortion is $O(\eps)$, confirming that spectral sparsification
safely preserves the mean-field threshold.

For heterogeneous networks with $\Delta/\lamG = \Theta(\sqrt{\Delta})$,
the relative distortion grows as $O(\eps\sqrt{\Delta})$ under the deterministic bound.
Practitioners should validate threshold-sensitive analyses against the unsparsified graph
when operating on networks with high degree heterogeneity.

\subsection{Open Questions}

The main question left open by this work concerns
edge deletion:

\begin{quote}
\textbf{Open Problem~\ref{op:genuine} (Edge-deletion distortion).}
\emph{Is there a connected graph $G$ and a $(1\pm\eps)$-Laplacian sparsifier
$H$ with some edges deleted such that $|\lamH - \lamG| = \omega(\eps\lamG)$?}
\end{quote}

\noindent Our lower bounds (Section~\ref{sec:lower}) show that
$|\lamH - \lamG| \geq \eps\lamG$ is achievable via uniform reweighting,
and the deterministic upper bound $\eps(2\Delta - \lamG)$ is tight for
regular graphs. But the gap between $\eps\lamG$ and
$\eps(2\Delta - \lamG)$ for heterogeneous graphs is a factor of
$2\Delta/\lamG - 1$, and it remains unknown whether edge-deleting
sparsifiers can exploit this gap. A resolution would determine whether the
$\gamma(G)$-dependence in the deterministic bounds is an artifact of the proof
technique or a genuine phenomenon.

Further open directions:

\begin{enumerate}[label=(\arabic*)]
\item \textbf{Localized eigenvector regimes.}
For graphs whose Perron eigenvector concentrates on high-degree vertices---such as
Chung--Lu power-law models with $\beta \in (5/2, 3)$---the delocalization hypothesis
of Theorem~\ref{thm:delocalized} fails, and the deterministic bound
$O(\eps\Delta)$ may be far from tight. Extending the probabilistic analysis to
this regime requires entrywise eigenvector perturbation results beyond
Davis--Kahan.

\item \textbf{Removing the $\sqrt{\log n}$ factor.}
The $\sqrt{\log n}$ factor in Theorem~\ref{thm:delocalized} arises from
Bernstein's inequality. Is it necessary, or can it be removed?

\item \textbf{Power-law networks with $\beta \leq 5/2$.}
For $\beta \leq 5/2$, the spectral radius is no longer governed by a single hub's
star neighborhood, and eigenvector localization has a different character.
\end{enumerate}

\subsection{Scope}

The SS sparsifier produces $O(n\log n/\eps^2)$ edges in expectation. This yields
genuine sparsification when $m \gg n\log n/\eps^2$, which holds for dense graphs.
For sparse graphs with $m = O(n)$, the sampling budget can
exceed $m$, and the support of $H$ may saturate near the full original edge set.
The deterministic bounds remain valid in this regime but are most practically useful
when actual edge reduction occurs.

\section*{Conflict of Interest}
The author declares no conflict of interest.

\bibliographystyle{plainnat}

\end{document}